\begin{document}

\newcommand{\nc}{\newcommand}
\newcommand{\delete}[1]{}
\nc{\mfootnote}[1]{\footnote{#1}} 
\nc{\todo}[1]{\tred{To do:} #1}

\nc{\mlabel}[1]{\label{#1}}  
\nc{\mcite}[1]{\cite{#1}}  
\nc{\mref}[1]{\ref{#1}}  
\nc{\mbibitem}[1]{\bibitem{#1}} 

\delete{
\nc{\mlabel}[1]{\label{#1}  
{\hfill \hspace{1cm}{\bf{{\ }\hfill(#1)}}}}
\nc{\mcite}[1]{\cite{#1}{{\bf{{\ }(#1)}}}}  
\nc{\mref}[1]{\ref{#1}{{\bf{{\ }(#1)}}}}  
\nc{\mbibitem}[1]{\bibitem[\bf #1]{#1}} 
}

\theoremstyle{plain}

\newtheorem{theorem}{Theorem}[section]
\newtheorem{prop}[theorem]{Proposition}
\newtheorem{defn}[theorem]{Definition}
\newtheorem{lemma}[theorem]{Lemma}
\newtheorem{coro}[theorem]{Corollary}
\newtheorem{prop-def}[theorem]{Proposition-Definition}
\newtheorem{claim}{Claim}[section]
\newtheorem{remark}[theorem]{Remark}
\newtheorem{propprop}{Proposed Proposition}[section]
\newtheorem{conjecture}{Conjecture}
\newtheorem{exam}[theorem]{Example}
\newtheorem{assumption}{Assumption}
\newtheorem{condition}[theorem]{Assumption}
\newtheorem{question}[theorem]{Question}

\renewcommand{\labelenumi}{{\rm(\alph{enumi})}}
\renewcommand{\theenumi}{\alph{enumi}}

\nc{\tred}[1]{\textcolor{red}{#1}}
\nc{\tblue}[1]{\textcolor{blue}{#1}}
\nc{\tgreen}[1]{\textcolor{green}{#1}}
\nc{\tpurple}[1]{\textcolor{purple}{#1}}
\nc{\btred}[1]{\textcolor{red}{\bf #1}}
\nc{\btblue}[1]{\textcolor{blue}{\bf #1}}
\nc{\btgreen}[1]{\textcolor{green}{\bf #1}}
\nc{\btpurple}[1]{\textcolor{purple}{\bf #1}}

\nc{\tobruno}[1]{\textcolor{red}{Dear Bruno, #1}}
\nc{\toCLX}[1]{\textcolor{blue}{Dear BLX, #1}}


\nc{\gensp}{V} 
\nc{\relsp}{R} 
\nc{\leafsp}{\mathcal{X}}    
\nc{\treesp}{\mathbb{T}} 
\nc{\genbas}{\mathcal{V}} 
\nc{\opd}{\mathcal{P}} 

\nc{\vin}{\mathrm{Vin}}    
\nc{\lin}{\mathrm{Lin}}    
\nc{\inv}{\mathrm{In}}

\nc{\bvp}{V_P}     

\nc{\gop}{{\,\omega\,}}     
\nc{\gopb}{{\,\nu\,}}
\nc{\svec}[2]{{\textrm{\tiny{$\left(\begin{matrix}#1\\
#2\end{matrix}\right)$}}}}  
\nc{\ssvec}[2]{{\textrm{\tiny{$\left(\begin{matrix}#1\\
#2\end{matrix}\right)$}}}} 
\nc{\treeg}[5]{\vcenter{\xymatrix@M=1.5pt@R=1.5pt@C=0pt{#1 & & #2 & & & #3 \\ & #5 \ar@{-}[lu] \ar@{-}[ru] & & & & \\ & & #4 \ar@{-}[lu] \ar@{-}[rrruu] & & \\ & & & & & \\ & & \ar@{-}[uu] & & & }}}
\nc{\treed}[5]{\vcenter{\xymatrix@M=2pt@R=4pt@C=2pt{#1 & & & #2  & & #3 \\ & & & & #5 \ar@{-}[lu] \ar@{-}[ru] & \\ & & & #4 \ar@{-}[ru] \ar@{-}[llluu] & \\ & & & & & \\ & & & \ar@{-}[uu] & & }}}

\nc{\tsvec}[3]{{\textrm{\tiny{$\left(\begin{matrix}#1\\
#2\\#3\end{matrix}\right)$}}}}  

\nc{\stsvec}[3]{{\textrm{\tiny{$\left(\begin{matrix}#1\\
#2\\#3\end{matrix}\right)$}}}} 

\nc{\su}{\mathrm{BSu}}
\nc{\tsu}{\mathrm{TSu}}
\nc{\TSu}{\mathrm{TSu}}
\nc{\eval}[1]{{#1}_{\big|D}}
\nc{\oto}{\leftrightarrow}

\nc{\oaset}{\mathbf{O}^{\rm alg}}
\nc{\omset}{\mathbf{O}^{\rm mod}}
\nc{\oamap}{\Phi^{\rm alg}}
\nc{\ommap}{\Phi^{\rm mod}}
\nc{\ioaset}{\mathbf{IO}^{\rm alg}}
\nc{\iomset}{\mathbf{IO}^{\rm mod}}
\nc{\ioamap}{\Psi^{\rm alg}}
\nc{\iommap}{\Psi^{\rm mod}}

\nc{\suc}{{bisuccessor}\xspace} \nc{\Suc}{{Bisuccessor}\xspace}
\nc{\sucs}{{bisuccessors}\xspace} \nc{\Sucs}{{Bisuccessors}\xspace}
\nc{\Tsuc}{{trisuccessor}\xspace}
\nc{\TSuc}{{Trisuccessor}\xspace}
\nc{\Tsucs}{{trisuccessors}\xspace}
\nc{\TSucs}{{Trisuccessors}\xspace}
\nc{\Lsuc}{{L-successor}\xspace}
\nc{\Lsucs}{{L-successors}\xspace} \nc{\Rsuc}{{R-successor}\xspace}
\nc{\Rsucs}{{R-successors}\xspace}

\nc{\bia}{{$\mathcal{P}$-bimodule ${\bf k}$-algebra}\xspace}
\nc{\bias}{{$\mathcal{P}$-bimodule ${\bf k}$-algebras}\xspace}

\nc{\rmi}{{\mathrm{I}}}
\nc{\rmii}{{\mathrm{II}}}
\nc{\rmiii}{{\mathrm{III}}}

\nc{\pll}{\beta}
\nc{\plc}{\epsilon}

\nc{\ass}{{\mathit{Ass}}}
\nc{\as}{{\mathit{As}}}
\nc{\lie}{{\mathit{Lie}}}
\nc{\comm}{{\mathit{Com}}}
\nc{\dend}{{\mathit{Dend}}}
\nc{\zinb}{{\mathit{Zinb}}}
\nc{\tridend}{{\mathit{TriDend}}}
\nc{\prelie}{{\mathit{PreLie}}}
\nc{\postlie}{{\mathit{PostLie}}}
\nc{\quado}{{\mathit{Quad}}}
\nc{\octo}{{\mathit{Octo}}}
\nc{\ldend}{{\mathit{LDend}}}
\nc{\lquad}{{\mathit{LQuad}}}

 \nc{\adec}{\check{;}} \nc{\aop}{\alpha}
\nc{\dftimes}{\widetilde{\otimes}} \nc{\dfl}{\succ} \nc{\dfr}{\prec}
\nc{\dfc}{\circ} \nc{\dfb}{\bullet} \nc{\dft}{\star}
\nc{\dfcf}{{\mathbf k}} \nc{\apr}{\ast} \nc{\spr}{\cdot}
\nc{\twopr}{\circ} \nc{\tspr}{\star} \nc{\sempr}{\ast}
\nc{\disp}[1]{\displaystyle{#1}}
\nc{\bin}[2]{ (_{\stackrel{\scs{#1}}{\scs{#2}}})}  
\nc{\binc}[2]{ \left (\!\! \begin{array}{c} \scs{#1}\\
    \scs{#2} \end{array}\!\! \right )}  
\nc{\bincc}[2]{  \left ( {\scs{#1} \atop
    \vspace{-.5cm}\scs{#2}} \right )}  
\nc{\sarray}[2]{\begin{array}{c}#1 \vspace{.1cm}\\ \hline
    \vspace{-.35cm} \\ #2 \end{array}}
\nc{\bs}{\bar{S}} \nc{\dcup}{\stackrel{\bullet}{\cup}}
\nc{\dbigcup}{\stackrel{\bullet}{\bigcup}} \nc{\etree}{\big |}
\nc{\la}{\longrightarrow} \nc{\fe}{\'{e}} \nc{\rar}{\rightarrow}
\nc{\dar}{\downarrow} \nc{\dap}[1]{\downarrow
\rlap{$\scriptstyle{#1}$}} \nc{\uap}[1]{\uparrow
\rlap{$\scriptstyle{#1}$}} \nc{\defeq}{\stackrel{\rm def}{=}}
\nc{\dis}[1]{\displaystyle{#1}} \nc{\dotcup}{\,
\displaystyle{\bigcup^\bullet}\ } \nc{\sdotcup}{\tiny{
\displaystyle{\bigcup^\bullet}\ }} \nc{\hcm}{\ \hat{,}\ }
\nc{\hcirc}{\hat{\circ}} \nc{\hts}{\hat{\shpr}}
\nc{\lts}{\stackrel{\leftarrow}{\shpr}}
\nc{\rts}{\stackrel{\rightarrow}{\shpr}} \nc{\lleft}{[}
\nc{\lright}{]} \nc{\uni}[1]{\widetilde{#1}} \nc{\wor}[1]{\check{#1}}
\nc{\free}[1]{\bar{#1}} \nc{\den}[1]{\check{#1}} \nc{\lrpa}{\wr}
\nc{\curlyl}{\left \{ \begin{array}{c} {} \\ {} \end{array}
    \right .  \!\!\!\!\!\!\!}
\nc{\curlyr}{ \!\!\!\!\!\!\!
    \left . \begin{array}{c} {} \\ {} \end{array}
    \right \} }
\nc{\leaf}{\ell}       
\nc{\longmid}{\left | \begin{array}{c} {} \\ {} \end{array}
    \right . \!\!\!\!\!\!\!}
\nc{\ot}{\otimes} \nc{\sot}{{\scriptstyle{\ot}}}
\nc{\otm}{\overline{\ot}}
\nc{\ora}[1]{\stackrel{#1}{\rar}}
\nc{\ola}[1]{\stackrel{#1}{\la}}
\nc{\pltree}{\calt^\pl}
\nc{\epltree}{\calt^{\pl,\NC}}
\nc{\rbpltree}{\calt^r}
\nc{\scs}[1]{\scriptstyle{#1}} \nc{\mrm}[1]{{\rm #1}}
\nc{\dirlim}{\displaystyle{\lim_{\longrightarrow}}\,}
\nc{\invlim}{\displaystyle{\lim_{\longleftarrow}}\,}
\nc{\mvp}{\vspace{0.5cm}} \nc{\svp}{\vspace{2cm}}
\nc{\vp}{\vspace{8cm}} \nc{\proofbegin}{\noindent{\bf Proof: }}
\nc{\proofend}{$\blacksquare$ \vspace{0.5cm}}
\nc{\freerbpl}{{F^{\mathrm RBPL}}}
\nc{\sha}{{\mbox{\cyr X}}}  
\nc{\ncsha}{{\mbox{\cyr X}^{\mathrm NC}}} \nc{\ncshao}{{\mbox{\cyr
X}^{\mathrm NC,\,0}}}
\nc{\shpr}{\diamond}    
\nc{\shprm}{\overline{\diamond}}    
\nc{\shpro}{\diamond^0}    
\nc{\shprr}{\diamond^r}     
\nc{\shpra}{\overline{\diamond}^r}
\nc{\shpru}{\check{\diamond}} \nc{\catpr}{\diamond_l}
\nc{\rcatpr}{\diamond_r} \nc{\lapr}{\diamond_a}
\nc{\sqcupm}{\ot}
\nc{\lepr}{\diamond_e} \nc{\vep}{\varepsilon} \nc{\labs}{\mid\!}
\nc{\rabs}{\!\mid} \nc{\hsha}{\widehat{\sha}}
\nc{\lsha}{\stackrel{\leftarrow}{\sha}}
\nc{\rsha}{\stackrel{\rightarrow}{\sha}} \nc{\lc}{\lfloor}
\nc{\rc}{\rfloor}
\nc{\tpr}{\sqcup}
\nc{\nctpr}{\vee}
\nc{\plpr}{\star}
\nc{\rbplpr}{\bar{\plpr}}
\nc{\sqmon}[1]{\langle #1\rangle}
\nc{\forest}{\calf}
\nc{\altx}{\Lambda_X} \nc{\vecT}{\vec{T}} \nc{\onetree}{\bullet}
\nc{\Ao}{\check{A}}
\nc{\seta}{\underline{\Ao}}
\nc{\deltaa}{\overline{\delta}}
\nc{\trho}{\widetilde{\rho}}

\nc{\rpr}{\circ}
\nc{\dpr}{{\tiny\diamond}}
\nc{\rprpm}{{\rpr}}

\nc{\mmbox}[1]{\mbox{\ #1\ }} \nc{\ann}{\mrm{ann}}
\nc{\Aut}{\mrm{Aut}} \nc{\can}{\mrm{can}}
\nc{\twoalg}{{two-sided algebra}\xspace}
\nc{\colim}{\mrm{colim}}
\nc{\Cont}{\mrm{Cont}} \nc{\rchar}{\mrm{char}}
\nc{\cok}{\mrm{coker}} \nc{\dtf}{{R-{\rm tf}}} \nc{\dtor}{{R-{\rm
tor}}}
\renewcommand{\det}{\mrm{det}}
\nc{\depth}{{\mrm d}}
\nc{\Div}{{\mrm Div}} \nc{\End}{\mrm{End}} \nc{\Ext}{\mrm{Ext}}
\nc{\Fil}{\mrm{Fil}} \nc{\Frob}{\mrm{Frob}} \nc{\Gal}{\mrm{Gal}}
\nc{\GL}{\mrm{GL}} \nc{\Hom}{\mrm{Hom}} \nc{\hsr}{\mrm{H}}
\nc{\hpol}{\mrm{HP}} \nc{\id}{\mrm{id}} \nc{\im}{\mrm{im}}
\nc{\incl}{\mrm{incl}} \nc{\length}{\mrm{length}}
\nc{\LR}{\mrm{LR}} \nc{\mchar}{\rm char} \nc{\NC}{\mrm{NC}}
\nc{\mpart}{\mrm{part}} \nc{\pl}{\mrm{PL}}
\nc{\ql}{{\QQ_\ell}} \nc{\qp}{{\QQ_p}}
\nc{\rank}{\mrm{rank}} \nc{\rba}{\rm{RBA }} \nc{\rbas}{\rm{RBAs }}
\nc{\rbpl}{\mrm{RBPL}}
\nc{\rbw}{\rm{RBW }} \nc{\rbws}{\rm{RBWs }} \nc{\rcot}{\mrm{cot}}
\nc{\rest}{\rm{controlled}\xspace}
\nc{\rdef}{\mrm{def}} \nc{\rdiv}{{\rm div}} \nc{\rtf}{{\rm tf}}
\nc{\rtor}{{\rm tor}} \nc{\res}{\mrm{res}} \nc{\SL}{\mrm{SL}}
\nc{\Spec}{\mrm{Spec}} \nc{\tor}{\mrm{tor}} \nc{\Tr}{\mrm{Tr}}
\nc{\mtr}{\mrm{sk}}

\nc{\ab}{\mathbf{Ab}} \nc{\Alg}{\mathbf{Alg}}
\nc{\Algo}{\mathbf{Alg}^0} \nc{\Bax}{\mathbf{Bax}}
\nc{\Baxo}{\mathbf{Bax}^0} \nc{\RB}{\mathbf{RB}}
\nc{\RBo}{\mathbf{RB}^0} \nc{\BRB}{\mathbf{RB}}
\nc{\Dend}{\mathbf{DD}} \nc{\bfk}{{\bf k}} \nc{\bfone}{{\bf 1}}
\nc{\base}[1]{{a_{#1}}} \nc{\detail}{\marginpar{\bf More detail}
    \noindent{\bf Need more detail!}
    \svp}
\nc{\Diff}{\mathbf{Diff}} \nc{\gap}{\marginpar{\bf
Incomplete}\noindent{\bf Incomplete!!}
    \svp}
\nc{\FMod}{\mathbf{FMod}} \nc{\mset}{\mathbf{MSet}}
\nc{\rb}{\mathrm{RB}} \nc{\Int}{\mathbf{Int}}
\nc{\Mon}{\mathbf{Mon}}
\nc{\remarks}{\noindent{\bf Remarks: }}
\nc{\OS}{\mathbf{OS}} 
\nc{\Rep}{\mathbf{Rep}}
\nc{\Rings}{\mathbf{Rings}} \nc{\Sets}{\mathbf{Sets}}
\nc{\DT}{\mathbf{DT}}

\nc{\BA}{{\mathbb A}} \nc{\CC}{{\mathbb C}} \nc{\DD}{{\mathbb D}}
\nc{\EE}{{\mathbb E}} \nc{\FF}{{\mathbb F}} \nc{\GG}{{\mathbb G}}
\nc{\HH}{{\mathbb H}} \nc{\LL}{{\mathbb L}} \nc{\NN}{{\mathbb N}}
\nc{\QQ}{{\mathbb Q}} \nc{\RR}{{\mathbb R}} \nc{\BS}{{\mathbb{S}}} \nc{\TT}{{\mathbb T}}
\nc{\VV}{{\mathbb V}} \nc{\ZZ}{{\mathbb Z}}


\nc{\calao}{{\mathcal A}} \nc{\cala}{{\mathcal A}}
\nc{\calc}{{\mathcal C}} \nc{\cald}{{\mathcal D}}
\nc{\cale}{{\mathcal E}} \nc{\calf}{{\mathcal F}}
\nc{\calfr}{{{\mathcal F}^{\,r}}} \nc{\calfo}{{\mathcal F}^0}
\nc{\calfro}{{\mathcal F}^{\,r,0}} \nc{\oF}{\overline{F}}
\nc{\calg}{{\mathcal G}} \nc{\calh}{{\mathcal H}}
\nc{\cali}{{\mathcal I}} \nc{\calj}{{\mathcal J}}
\nc{\call}{{\mathcal L}} \nc{\calm}{{\mathcal M}}
\nc{\caln}{{\mathcal N}} \nc{\calo}{{\mathcal O}}
\nc{\calp}{{\mathcal P}} \nc{\calq}{{\mathcal Q}} \nc{\calr}{{\mathcal R}}
\nc{\calt}{{\mathscr T}} \nc{\caltr}{{\mathcal T}^{\,r}}
\nc{\calu}{{\mathcal U}} \nc{\calv}{{\mathcal V}}
\nc{\calw}{{\mathcal W}} \nc{\calx}{{\mathcal X}}
\nc{\CA}{\mathcal{A}}

\nc{\fraka}{{\mathfrak a}} \nc{\frakB}{{\mathfrak B}}
\nc{\frakb}{{\mathfrak b}} \nc{\frakd}{{\mathfrak d}}
\nc{\oD}{\overline{D}}
\nc{\frakF}{{\mathfrak F}} \nc{\frakg}{{\mathfrak g}}
\nc{\frakm}{{\mathfrak m}} \nc{\frakM}{{\mathfrak M}}
\nc{\frakMo}{{\mathfrak M}^0} \nc{\frakp}{{\mathfrak p}}
\nc{\frakS}{{\mathfrak S}} \nc{\frakSo}{{\mathfrak S}^0}
\nc{\fraks}{{\mathfrak s}} \nc{\os}{\overline{\fraks}}
\nc{\frakT}{{\mathfrak T}}
\nc{\oT}{\overline{T}}
\nc{\frakX}{{\mathfrak X}} \nc{\frakXo}{{\mathfrak X}^0}
\nc{\frakx}{{\mathbf x}}
\nc{\frakTx}{\frakT}      
\nc{\frakTa}{\frakT^a}        
\nc{\frakTxo}{\frakTx^0}   
\nc{\caltao}{\calt^{a,0}}   
\nc{\ox}{\overline{\frakx}} \nc{\fraky}{{\mathfrak y}}
\nc{\frakz}{{\mathfrak z}} \nc{\oX}{\overline{X}}

\font\cyr=wncyr10

\nc{\redtext}[1]{\textcolor{red}{#1}}
\nc{\cm}[1]{\textcolor{orange}{Chengming: #1}}
\nc{\li}[1]{\textcolor{red}{Li: #1}}
\nc{\xiang}[1]{\textcolor{green}{Xiang: #1}}


\title[Operads]{Splitting of operations, Manin products and Rota-Baxter operators}

\author{Chengming Bai}
\address{Chern Institute of Mathematics\& LPMC, Nankai University, Tianjin 300071, P. R. China}
         \email{baicm@nankai.edu.cn}
\author{Olivia Bellier}
\address{Laboratoire J. A. Dieudonn\'{e}, Universit\'e de Nice, Parc Valrose, 06108 Nice Cedex 02, France}
\email{olivia.bellier@unice.fr}

\author{Li Guo}
\address{Department of Mathematics and Computer Science,
         Rutgers University,
         Newark, NJ 07102, USA}
\email{liguo@rutgers.edu}

\author{Xiang Ni}
\address{Department of Mathematics, Caltech, Pasadena, CA 91125, USA} \email{nixiang85@gmail.com}

\thanks{{\rm Corresponding author: Li Guo (liguo@rutgers.edu), Department of Mathematics and Computer Science, Rutgers University, Newark, NJ 08550, USA; (phone) 973-353-3917; (fax) 973-353-5270}}


\begin{abstract}
This paper provides a general operadic definition for the notion of splitting the operations of algebraic structures. This construction is proved to be equivalent to some Manin products of operads and it is shown to be closely related to Rota-Baxter operators. Hence, it gives a new effective way to compute Manin black products. The present construction is shown to have symmetry properties. Finally, this allows us to describe the algebraic structure of square matrices with coefficients in algebras of certain types. Many examples illustrate this text, including the case of Jordan algebras.
\end{abstract}

\keywords{operad, Manin product, Rota-Baxter operator, successor, pre-Lie algebra, PostLie algebra}

\maketitle

\tableofcontents

\setcounter{section}{0}

\section{Introduction}
Since the late 1990s, several algebraic structures with multiple binary operations have emerged: first the dendriform dialgebra of Loday~\mcite{Lo} and then the dendriform trialgebra of Loday and Ronco~\mcite{LR}, discovered from studying algebraic K-theory, operads and algebraic topology. These were followed by quite a few other related structures, such as the quadri-algebra~\mcite{AL}, the ennea-algebra, the NS-algebra,
the dendriform-Nijenhuis and octo-algebra~\mcite{Le1,Le2,Le3}.
All these algebraic structures have a
common property of ``splitting the associativity", i.e., expressing the
multiplication of an associative algebra as the sum of a string of
binary operations. For example, a dendriform dialgebra has a string
of two operations and satisfies three axioms, and it can be seen as
an associative algebra whose multiplication can be decomposed into
two operations ``in a coherent way". The constructions found later have increasing complexity in their
definitions. For example the quadri-algebra~\mcite{AL} has a string
of four operations satisfying nine axioms and the
octo-algebra~\mcite{Le2} has a string of eight operations satisfying $27$ axioms. As shown in~\mcite{EG}, these constructions can be put into the framework of operad (black square) products for nonsymmetric operads~\mcite{EG,Lo4,Va}. By doing so, they proved that these newer algebraic structures can be obtained from the known ones by the black square product.

It has been observed that a crucial role in the splitting of associativity is also played by the Rota-Baxter operator which originated from the probability study of G. Baxter~\mcite{Bax}, promoted by the combinatorial study of G.-C. Rota~\mcite{Ro} and found many applications in the last decade in mathematics and physics~\mcite{Ag1,Bai,BGN2,EG1,Gun,GK1,Uc1}, especially in the Connes-Kreimer approach of renormalization in quantum field theory~\mcite{CK,EGK,GZ,MP}. The first instance of such role is the fact that a Rota-Baxter operator of weight zero on an associative algebra gives a dendriform algebra~\mcite{Ag1,Ag2}. Further instances were discovered later~\cite{AL,E,Le1,Le2,Le3}. It was then shown that, in general, a Rota-Baxter operator on a class of binary quadratic nonsymmetric operads gives the black square product of dendriform algebra with these operads~\mcite{EG}.

More recently, analogues of the dendriform dialgebra, quadri-algebra and octo-algebra for the Lie algebra, Jordan algebra, alternative algebra and Poisson algebra have been obtained~\mcite{Ag2,BLN,HNB,LNB,NB}. They can be regarded as ``splitting" of the operations in these latter algebras.
On the other hand, it has been observed~\mcite{Va} that taking the Manin black product with the operad $\prelie$ of preLie algebras also plays a role of splitting the operations of an operad. For example, the Manin black product of $\prelie$ with the operad of associative algebras (resp. commutative algebras) gives the operad of dendriform dialgebras (resp. Zinbiel algebras).

\smallskip

Our goal in this paper is to set up a general framework to make precise the notion of ``splitting" any binary algebraic operad, and to generalize the aforementioned relationship of ``splitting" an operad with the Manin product and the Rota-Baxter operator.
We achieve this through defining and studying the {\bf successors}, namely the {\bf \suc} and {\bf \Tsuc}, of a binary algebraic operad defined by
generating operations and relations. Thus we can go far beyond the scope of binary quadratic nonsymmetric operads and can apply the construction for example to the operads of Lie algebras, Poisson algebras and Jordan algebras. This gives a quite general way to relate known operads and to produce new operads from the known ones.

We then explain the relationship between the three constructions applied to a binary operad $\calp$: taking its \suc (resp. \Tsuc) is equivalent to taking its Manin black product $\bullet$ with the operad $\prelie$ (resp. $\postlie$), when the operad is quadratic. Both constructions can be obtained from  a Rota-Baxter operator of weight zero (resp. non-zero). This is summed up in the following morphisms of operads.
$$ \prelie \bullet \calp \cong \su(\calp) \to \rb_{0}(\calp) \hspace*{1cm}\text{ and } \hspace*{1cm}\postlie \bullet \calp \cong \tsu(\calp) \to \rb_{1}(\calp) \ .$$
Notice that the left-hand side isomorphisms provide an effective way of computing the Manin products using the successors.

The space of squared matrices with coefficients in a commutative algebra carries a canonical associative algebra structure. We generalize such a result using the notion of successors: we  describe canonical algebraic structures carried by squared matrices with coefficients in algebras over an operad. Finally, the present notion of successors is defined in such a way that it shares nice symmetry properties.

\smallskip

The following is a layout of this paper. In Section~\mref{sec:conc}, the concepts of \emph{\suc} and  \emph{\Tsuc} are introduced, together with examples and basic properties. The relationship of the successors with the Manin black product is studied in Section~\mref{sec:mp}, establishing the connection indicated by the left link in the above diagram. We apply these results to the study of algebraic properties of square matrices in Section~\mref{sec:matrix}.
The relationship of the successors with the Rota-Baxter operator is studied in Section~\mref{sec:rb}, establishing the connection indicated by the right link in the above diagram.
In Section~\mref{sec:symm}, we prove symmetric properties of iterated \sucs and \Tsucs. Further examples are provided in the Appendix.

\section{The successors of a binary  operad}
\mlabel{sec:conc}
In this section, we first introduce the concepts of the successors, namely \suc and \Tsuc, of a labeled planar binary tree. These concepts are then applied to define similar concepts for a nonsymmetric operad and a (symmetric) operad. A list of examples are provided, followed by a study of the relationship among an operad, its \suc and its \Tsuc.

\subsection{The successors of a tree}

\subsubsection{Labeled trees}
\begin{defn}
{\rm
\begin{enumerate}
\item
Let $\calt$ denote the set of planar binary reduced rooted trees together with the trivial tree $\vcenter{\xymatrix@M=4pt@R=8pt@C=4pt{\ar@{-}[d]\\ \\}}$. If $t\in \calt$ has $n$ leaves, we call $t$ an {\bf $n$-tree}. For each vertex $v$ of $t$, we let $\inv(v)$ denote the set of inputs of $v$.
\item Let $\mathcal{X}$ be a set and let $t$ be an $n$-tree. By a {\bf decorated tree} we mean a tree $t$ of $t(\mathcal{X})$ together with a decoration on the vertices of $t$ by elements of $\mathcal{X}$ {\em and} a decoration on the leaves of $t$ by distinct positive integers. Let $t(\mathcal{X})$ denote the set of decorated trees for $t$ and let
\begin{equation}
\calt(\mathcal{X})=\coprod_{ t \in \calt} t(\mathcal{X}).
\notag 
\end{equation}
If $\tau\in t(\mathcal{X})$ for an $n$-tree $t$, we call $\tau$ a {\bf labeled $n$-tree}.
\item
For $\tau\in \calt(\mathcal{X})$, we let $\vin(\tau)$ (resp. $\lin(\tau)$) denote the set of labels of the vertices (resp. leaves) of $\tau$.
\item
Let $\tau_\ell,\tau_r\in \calt(\mathcal{X})$ with disjoint sets of leaf labels. Let $\gop\in \mathcal{X}$.
The {\bf grafting of $\tau_\ell$ and $\tau_r$ along $\gop$} is denoted by $\tau_\ell\vee_\gop \tau_r$. It gives rise to an element in $\calt(\mathcal{X})$.
\item
For $\tau\in \calt(\mathcal{X})$ with $|\lin(\tau)|>1$, we let $\tau=\tau_\ell\vee_{\gop} \tau_r$ denote the  unique decomposition of $\tau$ as a grafting of $\tau_\ell$ and $\tau_r$ in $\calt(\mathcal{X})$ along $\gop\in \mathcal{X}$.
\end{enumerate}}
\end{defn}

Let $V$ be a vector space, regarded as an arity graded vector space concentrated in arity 2: $V=V_2$. Recall that the free nonsymmetric operad $\mathcal{T}_{\hspace*{-0.1cm}ns}(\gensp)$ on $V$ is given by the vector space
$$ \mathcal{T}_{\hspace*{-0.1cm}ns}(\gensp) := \displaystyle{ \bigoplus_{t \in \calt } } \ t[\gensp] \ , $$
where $t[\gensp]$ is the treewise tensor module associated to $t$. This module is explicitly given by
$$  t[\gensp] := \displaystyle{\bigotimes_{v \in \vin(t)}} \gensp_{|\inv(v)|} \ . $$
See Section 5.8.5 of~\cite{LV}. A basis $\genbas$ of $\gensp$ induces a basis $t(\genbas)$ of $t[\gensp]$ and a basis $\calt(\genbas)$ of $\mathcal{T}_{\hspace*{-0.1cm}ns}(\gensp)$. In particular, any element of $t[\gensp]$ can be represented as a sum of elements in $t(\genbas)$.

\subsubsection{\Sucs}

\begin{defn}
{\rm
Let $\gensp$ be a vector space with a basis $\genbas$.
\begin{enumerate}
\item
Define a vector space
\begin{equation}
\widetilde{\gensp}=\gensp\ot (\bfk \prec \oplus \ \bfk \succ) \ ,
\mlabel{eq:tsp}
\end{equation}
where we denote $(\gop \otimes \prec)$ (resp. $(\gop \otimes \succ)$) by $\svec{\gop}{\prec}$ $\big($resp. $\svec{\gop}{\succ}$$\big)$, for $\gop\in \gensp$. Then $\genbas \times \{\prec,\succ\}$ is a basis of $\widetilde{\gensp}$.
\item
For a labeled $n$-tree $\tau$ in $\calt(\genbas)$, define  $\widetilde{\tau}$ in $\mathcal{T}_{\hspace*{-0.1cm}ns}(\ \widetilde{\gensp} \ )$, where $\widetilde{\gensp}$ is seen as an arity graded module concentrated in arity $2$, as follows:
\begin{itemize}
 \item[$\bullet$] $\widetilde{\vcenter{\xymatrix@M=0pt@R=8pt@C=4pt{\ar@{-}[d]\\ \\}}}=\vcenter{\xymatrix@M=4pt@R=8pt@C=4pt{\ar@{-}[d]\\ \\}}$
\item[$\bullet$] when $n\geq 2$, $\widetilde{\tau}$ is obtained by replacing each decoration $\gop\in \vin(\tau)$ by $$\svec{\gop}{\ast}:=\svec{\gop}{\prec}+\svec{\gop}{\succ} \ . $$
\end{itemize}
We extend this definition to $\mathcal{T}_{\hspace*{-0.1cm}ns}(\gensp)$ by linearity.
\end{enumerate}
}
\end{defn}

\begin{defn}
{\rm
Let $\gensp$ be a vector space with a basis $\genbas$. Let $\tau$ be a labeled $n$-tree in $\calt(\genbas)$.
The {\bf \suc} $\su_{x}(\tau) $ of $\tau$ with respect to a leaf $x\in \lin(\tau)$ is an element of $ \mathcal{T}_{\hspace*{-0.1cm}ns}(\widetilde{\gensp})$ defined by induction on $n:=|\lin(\tau)|$ as follows:
\begin{enumerate}
\item[$\bullet$] $\su_x(\vcenter{\xymatrix@M=4pt@R=8pt@C=4pt{\ar@{-}[d]\\ \\}})=\vcenter{\xymatrix@M=0pt@R=8pt@C=4pt{\ar@{-}[d]\\ \\}}$ \ ;
\item[$\bullet$] assume that $\su_x(\tau)$ have been defined for $\tau$ with $|\lin(\tau)|\leq k$ for a $k\geq 1$. Then, for a labeled $(k+1)$-tree $\tau\in \calt(\genbas)$ with its decomposition $\tau_\ell \vee_{\gop} \tau_r$, we define
\begin{equation}
\su_{x}(\tau)=\su_{x}(\tau_\ell\vee_{\gop} \tau_r) = \left \{\begin{array}{ll}
\su_{x}(\tau_\ell)\vee_{\ssvec{\gop}{\prec}} \widetilde{\tau}_{r}, & x\in \lin(\tau_\ell), \\
\widetilde{\tau}_{\ell}\vee_{\ssvec{\gop}{\succ}} \su_{x}(\tau_r),& x\in \lin(\tau_r). \end{array} \right .
\notag 
\end{equation}
\end{enumerate}
For $m\geq 1$, the $m$-th iteration of $\su$ is denoted by $\su^m$.
}\mlabel{de:vtilo}
\end{defn}
We have the following description of the \suc.
\begin{prop}\mlabel{succpath}
Let $V$ be a vector space with a basis $\genbas$, $\tau$ be in $\calt(\genbas)$ and $x$ be in $\lin(\tau)$. The \suc $\su_x(\tau)$ of $\tau$ is obtained by relabeling each vertex of $\tau$ according to the following rules:
\begin{enumerate}
\item we replace the label $\gop$ of each vertex on the path from the root to the leave $x$ of $\tau$ by
\begin{enumerate}
\item[(i)] $\svec{\gop}{\prec}$ if the path turns left at this vertex,
\item[(ii)] $\svec{\gop}{\succ}$ if the path turns right at this vertex,
\end{enumerate}
\item we replace the label $\gop$ of each vertex not on the path from the root to the leave $x$ of $\tau$ by $\svec{\gop}{\star}:=\svec{\gop}{\prec} +\svec{\gop}{\succ}$.
\end{enumerate}
\end{prop}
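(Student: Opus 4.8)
The plan is to prove Proposition~\ref{succpath} by induction on $n = |\lin(\tau)|$, matching the inductive definition of $\su_x(\tau)$ in Definition~\ref{de:vtilo} against the ``relabel-along-the-path'' recipe in the statement. Let me denote by $S_x(\tau)$ the tree obtained by the relabeling procedure described in (1)--(2) of the proposition; the goal is to show $S_x(\tau) = \su_x(\tau)$ for all $\tau \in \calt(\genbas)$ and all $x \in \lin(\tau)$.

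\medskip

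\textbf{Base case.} When $n = 1$, the tree $\tau$ is the trivial tree $\vcenter{\xymatrix@M=4pt@R=8pt@C=4pt{\ar@{-}[d]\\ \\}}$, which has no vertices and a single leaf $x$. Both $\su_x(\tau)$ and $S_x(\tau)$ return the trivial tree (there is nothing to relabel), so the two agree.

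\medskip

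\textbf{Inductive step.} Suppose the claim holds for all labeled trees with at most $k$ leaves, and let $\tau$ be a labeled $(k+1)$-tree with its unique decomposition $\tau = \tau_\ell \vee_\gop \tau_r$ (Definition, part (5)); here $\gop$ labels the root vertex of $\tau$, and the two subtrees $\tau_\ell, \tau_r$ each have at most $k$ leaves. By definition, $\lin(\tau) = \lin(\tau_\ell) \sqcup \lin(\tau_r)$, so $x$ lies in exactly one of the two leaf-label sets; by the left--right symmetry of the argument it suffices to treat the case $x \in \lin(\tau_\ell)$. Then the path from the root of $\tau$ to $x$ turns left at the root vertex, so the relabeling recipe $S_x$ assigns $\svec{\gop}{\prec}$ to the root, relabels the vertices of $\tau_\ell$ exactly as $S_x(\tau_\ell)$ prescribes (the path inside $\tau$ restricted to $\tau_\ell$ is precisely the root-to-$x$ path of $\tau_\ell$), and relabels every vertex of $\tau_r$ by $\svec{\gopb}{\star}$ for its label $\gopb$ (since no vertex of $\tau_r$ lies on the path) — which is exactly $\widetilde{\tau}_r$ by the definition of $\widetilde{\,\cdot\,}$. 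Hence $S_x(\tau) = S_x(\tau_\ell) \vee_{\ssvec{\gop}{\prec}} \widetilde{\tau}_r$. On the other hand, Definition~\ref{de:vtilo} gives $\su_x(\tau) = \su_x(\tau_\ell) \vee_{\ssvec{\gop}{\prec}} \widetilde{\tau}_r$, and by the induction hypothesis $S_x(\tau_\ell) = \su_x(\tau_\ell)$. Therefore $S_x(\tau) = \su_x(\tau)$, completing the induction. The case $x \in \lin(\tau_r)$ is identical with left/right, $\prec/\succ$, and $\ell/r$ interchanged, using the second branch of the recursion in Definition~\ref{de:vtilo}.

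\medskip

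\textbf{Main obstacle.} There is no serious obstacle; this is essentially a bookkeeping argument. The one point that deserves care is the compatibility of the two notions of ``path'': one must check that under the grafting decomposition $\tau = \tau_\ell \vee_\gop \tau_r$, the root-to-$x$ path of $\tau$ (with $x \in \lin(\tau_\ell)$) decomposes as the root vertex — where it turns left — followed by the root-to-$x$ path of $\tau_\ell$, with the left/right turning directions preserved, and that the set of vertices \emph{not} on this path is exactly (the root being on the path) $\vin(\tau_r)$ together with the off-path vertices of $\tau_\ell$. Both facts are immediate from the structure of a planar binary tree, so once stated they require no computation. Finally, extending the identity from individual labeled trees to all of $\mathcal{T}_{\hspace*{-0.1cm}ns}(\gensp)$ is automatic since both $\su_x$ and the relabeling operation are defined by linearity on the basis $\calt(\genbas)$.
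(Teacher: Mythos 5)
Your proof is correct and follows exactly the route the paper takes: the paper's proof is simply ``By induction on $|\lin(\tau)|\geq 1$,'' and your argument is the detailed version of that induction, comparing the recursive definition of $\su_x$ on the decomposition $\tau=\tau_\ell\vee_\gop\tau_r$ with the path-relabeling recipe. Nothing to add.
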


\begin{proof}
By induction on $|\lin(\tau)|\geq 1$.
\end{proof}

\begin{exam}
$\textrm{Su}_2 \left( \vcenter{\xymatrix@M=2pt@R=4pt@C=4pt{
1 \ar@{-}[dr] & & 2 & & 3 \ar@{-}[dr] & & 4 \ar@{-}[dl]\\
 & \gop_1 \ar@{-}[ur] & & & & \gop_3 \ar@{-}[ddll] & \\
 & & & & & & \\
 & & & \gop_2 \ar@{-}[uull] \ar@{-}[d] & & & \\
 & & & & & & \\
 & & & \ar@{-}[uu] & & &\\
}}\right) = $
$$\vcenter{\xymatrix@M=2pt@R=4pt@C=4pt{
1 \ar@{-}[dr] & & 2 & & 3 \ar@{-}[dr] & & 4 \ar@{-}[dl]\\
 & \ssvec{\gop_1}{\succ} \ar[ur] & & & & \ssvec{\gop_3}{\ast} \ar@{-}[ddll] & \\
 & & & & & & \\
 & & & \ssvec{\gop_2}{\prec} \ar[uull] & & & \\
 & & & & & & \\
 & & & \ar[uu] & & \\
}}= \vcenter{\xymatrix@M=2pt@R=4pt@C=4pt{
1 \ar@{-}[dr] & & 2 & & 3 \ar@{-}[dr] & & 4 \ar@{-}[dl]\\
 & \ssvec{\gop_1}{\succ} \ar@{-}[ur] & & & & \ssvec{\gop_3}{\prec} \ar@{-}[ddll] & \\
 & & & & & & \\
 & & & \ssvec{\gop_2}{\prec} \ar@{-}[uull] & & & \\
 & & & & & & \\
 & & & \ar@{-}[uu] & & \\
}} + \vcenter{\xymatrix@M=2pt@R=4pt@C=4pt{
1 \ar@{-}[dr] & & 2 & & 3 \ar@{-}[dr] & & 4 \ar@{-}[dl]\\
 & \ssvec{\gop_1}{\succ} \ar@{-}[ur] & & & & \ssvec{\gop_3}{\succ} \ar@{-}[ddll] & \\
 & & & & & & \\
 & & & \ssvec{\gop_2}{\prec} \ar@{-}[uull] & & & \\
 & & & & & & \\
 & & & \ar@{-}[uu] & & \\
}}$$
\end{exam}

\begin{lemma}\mlabel{lem:sactsu}
Let $V$ be a vector space with a basis $\genbas$, $\tau$ be a labeled $n$-tree in $\calt(\genbas)$ and $x$ be in $\lin(\tau)$. Then the following relation holds
$$ \su_{\sigma^{-1}(x)}(\tau^{\ \sigma})= \su_{x}(\tau)^{\ \sigma} \ , \forall \sigma \in \BS_{n} \ .$$
\end{lemma}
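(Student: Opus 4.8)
The plan is to deduce the identity from the explicit path description of the \suc given in Proposition~\mref{succpath}. Recall the convention for the $\BS_{n}$-action: for a labeled $n$-tree $\tau$ (leaves labeled by $\{1,\dots,n\}$) and $\sigma\in\BS_{n}$, the tree $\tau^{\ \sigma}$ has the same underlying planar tree and the same vertex decorations as $\tau$, and differs from $\tau$ only in that each leaf label $i$ is changed to $\sigma^{-1}(i)$. Thus the leaf of $\tau$ carrying the label $x$ and the leaf of $\tau^{\ \sigma}$ carrying the label $\sigma^{-1}(x)$ are the \emph{same} leaf of the \emph{same} underlying tree; in particular the path from the root to that leaf, together with the record of whether the path turns left or right at each vertex it meets, is literally the same for $\tau$ and for $\tau^{\ \sigma}$.

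With this observation the verification is immediate. By Proposition~\mref{succpath}, $\su_{\sigma^{-1}(x)}(\tau^{\ \sigma})$ is obtained from $\tau^{\ \sigma}$ by relabeling each vertex $v$ on the root-to-$x$ path as $\svec{\gop}{\prec}$ or $\svec{\gop}{\succ}$ according to the turn at $v$, and relabeling each off-path vertex as $\svec{\gop}{\star}$; likewise $\su_{x}(\tau)$ is obtained from $\tau$ by exactly the same recipe applied to the same path with the same turns. Hence $\su_{\sigma^{-1}(x)}(\tau^{\ \sigma})$ and $\su_{x}(\tau)$ have the same underlying planar tree and the same vertex labels, while their leaf labels differ exactly by the substitution $i\mapsto\sigma^{-1}(i)$; this says precisely that $\su_{\sigma^{-1}(x)}(\tau^{\ \sigma})=\su_{x}(\tau)^{\ \sigma}$. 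Since both sides are additive in $\tau$, the general case of an element of $\mathcal{T}_{\hspace*{-0.1cm}ns}(\gensp)$ follows at once.

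An alternative route is an induction on $n=|\lin(\tau)|$ directly from Definition~\mref{de:vtilo}. The base case $n=1$ is the trivial tree, where both sides equal the trivial tree. For the inductive step, write $\tau=\tau_{\ell}\vee_{\gop}\tau_{r}$; then $\tau^{\ \sigma}$ is again a grafting $\tau_{\ell}'\vee_{\gop}\tau_{r}'$ with $\tau_{\ell}',\tau_{r}'$ obtained from $\tau_{\ell},\tau_{r}$ by relabeling their leaves through the appropriate restriction of $\sigma^{-1}$, one has $x\in\lin(\tau_{\ell})$ if and only if $\sigma^{-1}(x)\in\lin(\tau_{\ell}')$, and $\widetilde{\tau^{\ \sigma}}=\widetilde{\tau}^{\ \sigma}$ because the action touches only the leaves; matching the two branches of the recursion defining $\su$ and invoking the inductive hypothesis then gives the claim. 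The only mildly delicate point on this route — the place where I would be most careful — is the bookkeeping that re-standardizes the leaf-label sets of $\tau_{\ell}$ and $\tau_{r}$ to $\{1,\dots,|\lin(\tau_{\ell})|\}$ and $\{1,\dots,|\lin(\tau_{r})|\}$ so that the inductive hypothesis, stated for $\BS_{k}$ acting on $k$-trees, literally applies; this is not a genuine obstacle, since $\su_{x}(\tau)$ depends on the leaf labels of $\tau$ only through which leaf is named $x$.
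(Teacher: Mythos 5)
Your proof is correct and is essentially the paper's argument made explicit: the paper disposes of this lemma with the one-line remark ``by inspection of the action of the symmetric group on a tree,'' and your observation via Proposition~\mref{succpath} --- that the $\BS_n$-action moves only the leaf labels while the underlying planar tree, the vertex decorations, and hence the root-to-leaf path with its turns are unchanged --- is exactly that inspection spelled out. The alternative induction you sketch matches the paper's inductive definition of $\su$ and is also fine, but no further detail is needed.
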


\begin{proof}
By inspection of the action of the symmetric group on a tree.
\end{proof}

\subsubsection{\TSucs}
\begin{defn}
{\rm Let $\gensp$ be a vector space with a basis $\genbas$.
\begin{enumerate}
\item
Define a vector space
\begin{equation}
\widehat{\gensp}=\gensp\ot (\ \bfk \prec \oplus \ \bfk \succ \oplus \ \bfk \ \cdot \ ) \ ,
\mlabel{eq:ttsp}
\end{equation}
where we denote $(\gop \otimes \prec)$ (resp. $(\gop \otimes \succ)$, resp. $(\gop \otimes \cdot)$) by $\svec{\gop}{\prec}$ $\Big($resp. $\svec{\gop}{\succ}$, resp. $\svec{\gop}{\cdot}$$\Big)$, for $\gop\in \gensp$. Then $\genbas \times \{\prec,\succ,\cdot\}$ is a basis of $\widehat{\gensp}$.
\item
For a labeled $n$-tree $\tau$ in $\calt(\genbas)$, define  $\widehat{\tau}$ in $\mathcal{T}_{\hspace*{-0.1cm}ns}\big( \widehat{\gensp}  \big)$, where $\widehat{\gensp}$ is regarded as an arity graded module concentrated in arity $2$, as follows:
\begin{itemize}
 \item[$\bullet$] $\widehat{\vcenter{\xymatrix@M=0pt@R=8pt@C=4pt{\ar@{-}[d]\\ \\}}}=\vcenter{\xymatrix@M=4pt@R=8pt@C=4pt{\ar@{-}[d]\\ \\}}$
\item[$\bullet$] when $n\geq 2$, $\widehat{\tau}$ is obtained by replacing the label $\gop\in \vin(\tau)$ of each vertex of $\tau$ by $$\svec{\gop}{\star}:=\svec{\gop}{\prec}+\svec{\gop}{\succ}+\svec{\gop}{\cdot} \ . $$
\end{itemize}
We extend this definition to $\mathcal{T}_{\hspace*{-0.1cm}ns}(\ \widehat{\gensp} \ )$ by linearity.
\end{enumerate}
}
\end{defn}

\begin{defn}
{\rm
Let $\gensp$ be a vector space with a basis $\genbas$. Let $\tau$ be a labeled $n$-tree in $\calt(\genbas)$ and let $J$ be a nonempty
subset of $\lin(\tau)$.
The {\bf \Tsuc} $\tsu_{J}(\tau)$ of $\tau$ with respect to $J$ is an element of $ \mathcal{T}_{\hspace*{-0.1cm}ns}(\widehat{\gensp})$ defined by induction on $n:=|\lin(\tau)|$ as follows:
\begin{enumerate}
\item[$\bullet$] $\tsu_J(\vcenter{\xymatrix@M=4pt@R=8pt@C=4pt{\ar@{-}[d]\\ \\}})=\vcenter{\xymatrix@M=0pt@R=8pt@C=4pt{\ar@{-}[d]\\ \\}}$ \ ;
\item[$\bullet$] assume that $\tsu_J(\tau)$ have been defined for $\tau$ with $|\lin(\tau)|\leq k$ for a $k\geq 1$. Then, for a labeled $(k+1)$-tree $\tau\in \calt(\genbas)$ with its decomposition $\tau_\ell \vee_{\gop} \tau_r$, we define
\begin{equation}
\tsu_{J}(\tau)=\tsu_{J}(\tau_\ell\vee_{\gop} \tau_r) = \left \{\begin{array}{lll}
\tsu_{J}(\tau_\ell)\vee_{\ssvec{\gop}{\prec}} \widehat{\tau}_{r}, & J\subseteq\lin(\tau_\ell), \\
\widehat{\tau}_{\ell}\vee_{\ssvec{\gop}{\succ}} \tsu_{J}(\tau_r),& J\subseteq \lin(\tau_r),\\
\tsu_{J\cap \lin(\tau_\ell)}(\tau_\ell)\vee_{\ssvec{\gop}{\cdot}} \tsu_{J\cap \lin(\tau_r)}(\tau_r), & {\rm otherwise}. \end{array} \right .
\notag 
\end{equation}
\end{enumerate}
For $m\geq 1$, the $m$-th iteration of $\tsu$ is denoted by $\tsu^m$. }\end{defn}

We have the following description of the \Tsuc.

\begin{prop}\mlabel{tsuccpath}
Let $V$ be a vector space with a basis $\genbas$, $\tau$ be in $\calt(\genbas)$ and $J$ be a nonempty subset of $\lin(\tau)$. The \Tsuc $\tsu_J(\tau)$ is obtained by relabeling each vertex of $\tau$ according to the following rules:
\begin{enumerate}
\item we replace the label $\gop$ of each vertex on at least one of the paths from the root to the leafs $x$ in $J$ by
    \begin{enumerate}
    \item[(i)] $\svec{\gop}{\prec}$ if all such paths turn left at this vertex;
    \item[(ii)] $\svec{\gop}{\succ}$ if all such paths turn right at this vertex;
    \item[(iii)] $\svec{\gop}{\cdot}$ if some of such paths turn left and some of such paths turn right at this vertex;
    \end{enumerate}
\item we replace the label $\gop$ of each other vertex by $\svec{\gop}{\star}:=\svec{\gop}{\prec} +\svec{\gop}{\succ}+\svec{\gop}{\cdot}$ \ .
\end{enumerate}
\end{prop}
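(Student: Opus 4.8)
The plan is to proceed exactly as for Proposition~\ref{succpath}, namely by induction on $n := |\lin(\tau)|$, but now keeping track of the three-fold branching in the recursive definition of $\tsu_J$. The base case $n = 1$ is trivial: $\tau$ is the trivial tree, it has no vertices to relabel, and $\tsu_J(\tau)$ is again the trivial tree, so both sides agree.

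For the inductive step I would fix the unique decomposition $\tau = \tau_\ell \vee_\gop \tau_r$ and first record two elementary observations about paths. First, the path from the root of $\tau$ to a leaf $x$ turns left at the root precisely when $x \in \lin(\tau_\ell)$ and turns right precisely when $x \in \lin(\tau_r)$. Second, for a vertex $v$ of $\tau$ other than the root, say $v$ lies in $\tau_\ell$, a root-to-$x$ path in $\tau$ passes through $v$ if and only if $x \in \lin(\tau_\ell)$ and the root-to-$x$ path in $\tau_\ell$ passes through $v$, and in that case the path makes the same left/right turn at $v$ in both trees. Consequently, the family of paths ``from the root to the leaves of $J$'' that visit a given vertex of $\tau_\ell$ (resp. $\tau_r$) depends only on $J \cap \lin(\tau_\ell)$ (resp. $J \cap \lin(\tau_r)$); in particular a vertex of $\tau_r$ is visited by no such path when $J \subseteq \lin(\tau_\ell)$.

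Then I would split into the three cases of the definition of $\tsu_J$. If $J \subseteq \lin(\tau_\ell)$, every path to a leaf of $J$ turns left at the root, so rule~(1)(i) puts $\svec{\gop}{\prec}$ at the root; the inductive hypothesis identifies $\tsu_J(\tau_\ell)$ with the rule-prescribed relabeling of $\tau_\ell$, which by the second observation is exactly the prescribed relabeling of those same vertices inside $\tau$; and since no leaf of $J$ lies in $\tau_r$, rule~(2) sends each vertex of $\tau_r$ with label $\gopb$ to $\svec{\gopb}{\star}$, i.e. to $\widehat{\tau}_r$. This matches $\tsu_J(\tau) = \tsu_J(\tau_\ell) \vee_{\svec{\gop}{\prec}} \widehat{\tau}_r$. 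The case $J \subseteq \lin(\tau_r)$ is symmetric, with $\svec{\gop}{\succ}$ at the root via rule~(1)(ii). If instead $J$ meets both $\lin(\tau_\ell)$ and $\lin(\tau_r)$, then among the paths to leaves of $J$ some turn left and some turn right at the root, so rule~(1)(iii) puts $\svec{\gop}{\cdot}$ there, matching $\tsu_J(\tau) = \tsu_{J \cap \lin(\tau_\ell)}(\tau_\ell) \vee_{\svec{\gop}{\cdot}} \tsu_{J \cap \lin(\tau_r)}(\tau_r)$; applying the inductive hypothesis to each factor, together with the observation that vertices of $\tau_\ell$ only see paths from $J \cap \lin(\tau_\ell)$ and vertices of $\tau_r$ only see paths from $J \cap \lin(\tau_r)$, shows that each factor is the prescribed relabeling of its subtree. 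These three cases are exhaustive, which closes the induction.

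I expect the only genuinely delicate point to be the second observation of the middle paragraph --- the fact that passing to a subtree neither changes which of the chosen paths run through a given vertex nor the direction they turn there. Once that is in hand, the rest is a direct, if somewhat bureaucratic, translation of the recursion into the path-relabeling description, in the same spirit as Proposition~\ref{succpath}.
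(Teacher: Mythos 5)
Your proof is correct and follows essentially the same route as the paper, which simply argues by induction on $|\lin(\tau)|$ exactly as in Proposition~\ref{succpath}; your write-up just makes explicit the case split along the recursive definition of $\tsu_J$ and the bookkeeping about how root-to-leaf paths restrict to the subtrees $\tau_\ell$ and $\tau_r$. No gaps.
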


\begin{proof}
The proof follows from the same argument as the proof of Proposition~\mref{succpath}.
\end{proof}

\begin{exam}
$\tsu_{\{1,3\}} \left( \vcenter{\xymatrix@M=2pt@R=4pt@C=4pt{
1 \ar@{-}[dr] & & 2 & & 3 \ar@{-}[dr] & & 4 \ar@{-}[dl]\\
 & \gop_1 \ar@{-}[ur] & & & & \gop_3 \ar@{-}[ddll] & \\
 & & & & & & \\
 & & & \gop_2 \ar@{-}[uull] \ar@{-}[d] & & & \\
 & & & & & & \\
 & & & \ar@{-}[uu] & & &\\
}}\right) = \vcenter{\xymatrix@M=2pt@R=4pt@C=4pt{
1  & & 2 \ar@{-}[dl]& & 3 & & 4 \ar@{-}[dl]\\
 & \ssvec{\gop_1}{\prec} \ar[ul] & & & & \ssvec{\gop_3}{\prec} \ar[ul] & \\
 & & & & & & \\
 & & & \ssvec{\gop_2}{\cdot} \ar[uull] \ar[uurr] & & & \\
 & & & & & & \\
 & & & \ar[uu] & & \\
}}$
\end{exam}
We have the following compatibility of the \Tsuc with permutations.
\begin{lemma}\mlabel{lem:sacttsu}
Let $V$ be a vector space with a basis $\genbas$, $\tau$ be a labeled $n$-tree in $\calt(\genbas)$ and $J$ be a nonempty subset of $\lin(\tau)$. Then the following relation holds
$$ \tsu_{\sigma^{-1}(J)}(\tau^{\ \sigma})= \tsu_{J}(\tau)^{\ \sigma} \ , \forall \sigma \in \BS_{n} \ .$$
\end{lemma}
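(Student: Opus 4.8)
The plan is to deduce the identity from the explicit path description of the \Tsuc given in Proposition~\mref{tsuccpath}, exactly as Lemma~\mref{lem:sactsu} is deduced from Proposition~\mref{succpath}; the whole argument is then a bookkeeping check with no genuine computation.

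First I would isolate the only feature of the $\BS_n$-action that is used. For $\sigma\in\BS_n$ and a labeled $n$-tree $\tau\in\calt(\genbas)$, the tree $\tau^{\ \sigma}$ has the same underlying planar binary tree as $\tau$ and the same decoration of its vertices by elements of $\genbas$; the only change is a bijection applied to the leaf labels, and --- with the labeling convention used in Lemma~\mref{lem:sactsu} --- the leaf of $\tau$ carrying label $x$ is the leaf of $\tau^{\ \sigma}$ carrying label $\sigma^{-1}(x)$. The same is true for the $\BS_n$-action on $\mathcal{T}_{ns}(\widehat{\gensp})$. In particular, for a vertex $v$ and a leaf $x$, whether $v$ lies on the path from the root to the leaf labeled $x$, and if so whether that path turns left or right at $v$, is a property of the underlying tree and the \emph{position} of the leaf only; hence it is the same for ``the leaf labeled $x$ of $\tau$'' and ``the leaf labeled $\sigma^{-1}(x)$ of $\tau^{\ \sigma}$''.

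Next I would compare the two sides vertex by vertex. By Proposition~\mref{tsuccpath}, $\tsu_J(\tau)$ is the single element of $t[\widehat{\gensp}]$ obtained by relabeling a vertex $v$ with decoration $\gop$ according to the family of paths from the root to the leaves whose labels lie in $J$, while $\tsu_{\sigma^{-1}(J)}(\tau^{\ \sigma})$ is obtained by relabeling $v$ according to the family of paths from the root to the leaves of $\tau^{\ \sigma}$ whose labels lie in $\sigma^{-1}(J)$. Since $x\in J$ corresponds to $\sigma^{-1}(x)\in\sigma^{-1}(J)$ and the two associated paths coincide in the underlying tree by the previous paragraph, these two families of paths are the same family of paths in $t$. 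Therefore the relabeling prescription of Proposition~\mref{tsuccpath} --- rules (i)--(iii) for a vertex lying on some root-to-$J$ path, and $\gop\mapsto\svec{\gop}{\star}$ for every other vertex --- assigns the very same new label (one of $\svec{\gop}{\prec}$, $\svec{\gop}{\succ}$, $\svec{\gop}{\cdot}$, $\svec{\gop}{\star}$) to $v$ in both trees. Thus $\tsu_J(\tau)$ and $\tsu_{\sigma^{-1}(J)}(\tau^{\ \sigma})$ have the same underlying tree and the same vertex decorations, and their leaf labelings differ precisely by $x\mapsto\sigma^{-1}(x)$; in other words $\tsu_{\sigma^{-1}(J)}(\tau^{\ \sigma})=\tsu_J(\tau)^{\ \sigma}$, as claimed.

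Alternatively, one can argue by induction on $n=|\lin(\tau)|$ straight from the recursive definition of $\tsu_J$: the case $n=1$ is trivial, and for $\tau=\tau_\ell\vee_{\gop}\tau_r$ one observes that $\tau^{\ \sigma}$ decomposes along the same $\gop$ as the grafting of the $\sigma$-relabelings of $\tau_\ell$ and $\tau_r$, that the trichotomy $J\subseteq\lin(\tau_\ell)$ / $J\subseteq\lin(\tau_r)$ / otherwise is preserved because $\sigma^{-1}$ carries $\lin(\tau_\ell)$ and $\lin(\tau_r)$ onto the leaf-label sets of those two relabeled subtrees, and then applies the induction hypothesis to $\tau_\ell$ and $\tau_r$ together with the compatibility of grafting with the $\BS_n$-action. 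The only slightly delicate point on this second route --- and the reason the path description is the smoother argument --- is the bookkeeping of the $\BS_n$-action across the grafting decomposition, since the leaves of $\tau_\ell$ and $\tau_r$ carry only subsets of the $n$ labels rather than initial segments; the path characterization sidesteps this entirely, and I do not expect any real obstacle either way.
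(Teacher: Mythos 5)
Your proposal is correct, and it is essentially the paper's own argument: the paper states this lemma without a written proof (its bisuccessor analogue, Lemma~\mref{lem:sactsu}, is dispatched with ``by inspection of the action of the symmetric group on a tree''), and your use of the path description of Proposition~\mref{tsuccpath} --- the $\BS_n$-action moves only leaf labels, so the root-to-$J$ paths and hence the vertex relabeling rules are unchanged --- is exactly that inspection made explicit.
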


\subsection{The successor of a binary nonsymmetric operad}\label{succnsopd}

Note that the definition of the \suc extends linearly from $\calt(\genbas)$ to
$\mathcal{T}_{\hspace*{-0.1cm}ns}(\gensp)$ and to $\mathcal{T}_{\hspace*{-0.1cm}ns}(\widehat{\gensp})$, when $\genbas$ is a linear basis of $\gensp$.

\begin{defn}
{\rm
Let $\gensp$ be a vector space and $\genbas$ be a basis of $V$.
\begin{enumerate}
\item
An element
$$r:=\sum_{i=1}^r c_{i}\tau_{i}, \quad c_{i}\in\bfk, \tau_i\in \calt(\genbas),$$
in $\mathcal{T}_{\hspace*{-0.1cm}ns}(\gensp)$ is called {\bf homogeneous} of arity $n$ if $|\lin(\tau_i)|= n $ for $1\leq i\leq r$.
\item
A collection of elements
$$r_s:=\sum_i c_{s,i}\tau_{s,i}, \quad c_{s,i}\in\bfk, \tau_{s,i}\in \calt(\genbas), 1\leq s\leq k, k\geq 1, $$
in $\mathcal{T}_{\hspace*{-0.1cm}ns}(\gensp)$ is called {\bf locally homogenous} if each element $r_s$, $1\leq s\leq k$, in the system is homogeneous of a certain arity $n_s$.
\end{enumerate}
}
\end{defn}

\begin{defn}
{\rm Let $\opd=\mathcal{T}_{\hspace*{-0.1cm}ns}(V)/(R)$ be a binary nonsymmetric operad with a basis $\genbas$ of $V=V_2$. In this case, the space of relations $R$ is the vector space spanned by locally homogeneous elements of the form
\begin{equation}
r_s=\sum_i c_{s,i}\tau_{s,i}\ \in \mathcal{T}_{\hspace*{-0.1cm}ns}(\gensp)\; , \; \ c_{s,i}\in\bfk, \ \tau_{s,i}\in \calt(\genbas), \ 1\leq s\leq k, k\geq 1.
\notag 
\end{equation}

\begin{enumerate}
\item
The {\bf \suc} of $\opd$ is defined to be the binary nonsymmetric operad $$\su(\opd):=\mathcal{T}_{\hspace*{-0.1cm}ns}( \widetilde{V}  )/(\su(R))\ ,$$ where
the space of relations is the vector space spanned by
\begin{equation}
\su(R):= \left\lbrace  \su_x(r_{s})=\sum_ic_{s,i}\su_{x}(\tau_{s,i})\;\Big|\; x\in \lin(\tau_{s,i}), \; 1\leq s\leq
k  \right\rbrace .
\notag 
\end{equation}
Note that, by our assumption,  for a fixed $s$, $\lin(\tau_{s,i})$ are the same for all $i$.
The {\bf $N$-th \suc} ($N\geq 2$) of $\mathcal{P}$, which is
denoted by $\su^{N}(\calp)$, is defined as the \suc of
the {\bf $(N-1)$-th \suc} of the operad, where the {\bf first
 \suc} of the operad is just the \suc of the operad.
\item The {\bf \Tsuc} of $\opd$ is defined to be the  binary nonsymmetric operad $$\tsu(\opd):=\mathcal{T}_{\hspace*{-0.1cm}ns}( \widehat{V}  )/(\tsu(R)),$$
where
the space of relations is the vector space spanned by
\begin{equation}
\tsu(R):= \left\lbrace  \tsu_J(r_{s})=\sum_ic_{s,i}\tsu_{J}(\tau_{s,i})\;\Big|\; \emptyset \neq J\subseteq \lin(\tau_{s,i}), \; 1\leq s\leq
k  \right\rbrace .
\notag 
\end{equation}
The {\bf $N$-th \Tsuc} ($N\geq 2$) of $\mathcal{P}$, which is
denoted by $\tsu^{N}(\calp)$, is defined as the \Tsuc of
the {\bf $(N-1)$-th \Tsuc} of the operad, where the {\bf first
\Tsuc} of the operad is just the \Tsuc of the operad.
\end{enumerate}
}\end{defn}

\begin{prop}\label{indgenbas}
The definition of the \suc (resp. the \Tsuc) of a binary non-symmetric operad does not depend on the choice of a basis of the vector space of generating operations.
\end{prop}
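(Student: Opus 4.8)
The plan is to show that if $\genbas$ and $\genbas'$ are two bases of $\gensp = \gensp_2$, then the operads produced from the presentation $\opd = \mathcal{T}_{\hspace*{-0.1cm}ns}(\gensp)/(\relsp)$ using $\genbas$ and $\genbas'$ are canonically isomorphic, both for $\su$ and for $\tsu$. The first observation I would record is that the constructions $\gensp \mapsto \widetilde{\gensp}$ and $\gensp \mapsto \widehat{\gensp}$ are functorial in $\gensp$: a linear isomorphism $\phi\colon \gensp \to \gensp$ induces $\widetilde{\phi} := \phi \otimes \id \colon \widetilde{\gensp} \to \widetilde{\gensp}$ (and likewise $\widehat{\phi}$), since $\widetilde{\gensp} = \gensp \otimes (\bfk\prec \oplus\ \bfk\succ)$ by definition~\eqref{eq:tsp}. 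Hence $\widetilde{\phi}$ sends $\svec{\gop}{\prec} \mapsto \svec{\phi(\gop)}{\prec}$, $\svec{\gop}{\succ}\mapsto\svec{\phi(\gop)}{\succ}$, and $\svec{\gop}{\star}\mapsto \svec{\phi(\gop)}{\star}$, the last because $\svec{\gop}{\star} = \svec{\gop}{\prec}+\svec{\gop}{\succ}$ and $\widetilde\phi$ is linear. Thus $\widetilde\phi$ extends to an isomorphism of free nonsymmetric operads $\mathcal{T}_{\hspace*{-0.1cm}ns}(\widetilde\phi)\colon \mathcal{T}_{\hspace*{-0.1cm}ns}(\widetilde\gensp) \to \mathcal{T}_{\hspace*{-0.1cm}ns}(\widetilde\gensp)$.

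The key compatibility I would then prove is the \emph{equivariance} of the \suc and \Tsuc under change of labels: for every labeled $n$-tree $\tau \in \calt(\genbas)$, every $x \in \lin(\tau)$, and every linear automorphism $\phi$ of $\gensp$,
$$ \mathcal{T}_{\hspace*{-0.1cm}ns}(\widetilde\phi)\bigl(\su_x(\tau)\bigr) = \su_x\bigl(\mathcal{T}_{\hspace*{-0.1cm}ns}(\phi)(\tau)\bigr), $$
where on the right-hand side $\su_x$ has been extended linearly from $\calt(\genbas)$ to $\mathcal{T}_{\hspace*{-0.1cm}ns}(\gensp)$ as noted at the start of Section~\ref{succnsopd}; and similarly for $\tsu_J$. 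Both identities follow by the inductive definitions (Definition~\ref{de:vtilo} and the analogous one for $\tsu$), using that $\phi$ preserves the grafting decomposition $\tau = \tau_\ell \vee_\gop \tau_r$ only up to the action on $\gop$ — here one must note that $\mathcal{T}_{\hspace*{-0.1cm}ns}(\phi)(\tau_\ell\vee_\gop\tau_r) = \mathcal{T}_{\hspace*{-0.1cm}ns}(\phi)(\tau_\ell)\vee_{\phi(\gop)}\mathcal{T}_{\hspace*{-0.1cm}ns}(\phi)(\tau_r)$ is a sum of trees, so one really verifies the identity on each monomial $\tau\in\calt(\genbas)$ and extends by linearity. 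Alternatively, and perhaps more cleanly, one can bypass the induction by invoking the closed-form descriptions in Proposition~\ref{succpath} and Proposition~\ref{tsuccpath}: the path from the root to $x$ (or the union of paths to the leaves in $J$) and the left/right turns are combinatorial data of the underlying tree alone, untouched by relabeling vertices, so the relabeling rules commute with applying $\phi$ to all vertex decorations.

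Granting the equivariance, the proof concludes as follows. Writing $\phi$ for the change-of-basis isomorphism carrying $\genbas$ to $\genbas'$, the induced map $\mathcal{T}_{\hspace*{-0.1cm}ns}(\phi)$ sends the chosen spanning set $\{r_s\}$ of $\relsp$ (expressed over $\genbas$) to a spanning set of $\relsp$ expressed over $\genbas'$, since $\relsp$ is a fixed subspace of $\mathcal{T}_{\hspace*{-0.1cm}ns}(\gensp)$ independent of the basis. By equivariance, $\mathcal{T}_{\hspace*{-0.1cm}ns}(\widetilde\phi)$ carries $\su_x(r_s)$ to $\su_x\bigl(\mathcal{T}_{\hspace*{-0.1cm}ns}(\phi)(r_s)\bigr)$, hence carries the spanning set $\su(\relsp)$ computed from $\genbas$ bijectively onto the spanning set computed from $\genbas'$; therefore it carries the ideal $(\su(\relsp))$ in $\mathcal{T}_{\hspace*{-0.1cm}ns}(\widetilde\gensp)$ onto the corresponding ideal for $\genbas'$, and descends to an isomorphism of quotient operads $\su(\opd)_\genbas \xrightarrow{\ \sim\ } \su(\opd)_{\genbas'}$. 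The same argument with $\widehat\phi$ and Proposition~\ref{tsuccpath} handles $\tsu$. The one point that needs genuine care — the main (modest) obstacle — is the bookkeeping in the equivariance step: because $\su_x$ and $\tsu_J$ were only \emph{defined} on monomial labeled trees and then extended by linearity, one must be scrupulous that ``apply $\phi$ to all decorations'' and ``extend $\su_x$ linearly'' are being composed in the right order, and that the per-vertex nature of both operations makes them commute; once that is set up, everything else is formal. (A subtle but harmless point: for a \emph{fixed} $s$ all $\tau_{s,i}$ have the same leaf set, so the index $x$, resp. $J$, ranges over a set intrinsic to $r_s$ and is preserved by $\mathcal{T}_{\hspace*{-0.1cm}ns}(\phi)$, which acts trivially on leaf labels.)
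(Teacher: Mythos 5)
Your central lemma---the vertex-wise, path-based description showing that the successor rules commute with relabeling all decorations by a linear map---is the right mechanism, and it is exactly what the paper's (very short) proof appeals to. The problem is the way you assemble it at the end. What must be proved is that the \emph{same} element $r_s\in\relsp$, expanded once in $\calt(\genbas)$-monomials and once in $\calt(\genbas')$-monomials, produces the same element $\su_x(r_s)$ of $\mathcal{T}_{\hspace*{-0.1cm}ns}(\widetilde{\gensp})$, so that $\su(\relsp)$ and the ideal it generates are unchanged. Changing the basis does \emph{not} mean applying the change-of-basis automorphism $\phi$ to the relations, and your claim that ``$\mathcal{T}_{\hspace*{-0.1cm}ns}(\phi)$ sends the chosen spanning set $\{r_s\}$ of $\relsp$ to a spanning set of $\relsp$'' is false in general: $\relsp$ is a fixed subspace, but it need not be stable under $\mathcal{T}_{\hspace*{-0.1cm}ns}(\phi)$. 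Concretely, take $\gensp$ with basis $\genbas=\{\mu,\nu\}$, let $\relsp$ be spanned by the arity-$3$ element $r$ corresponding to $(x\mu y)\mu z-(x\nu y)\nu z$, and let $\phi(\mu)=\mu+\nu$, $\phi(\nu)=\nu$; then $\mathcal{T}_{\hspace*{-0.1cm}ns}(\phi)(r)$ contains the cross terms corresponding to $(x\mu y)\nu z$ and $(x\nu y)\mu z$ and lies outside $\relsp$. Hence there is no reason for $\mathcal{T}_{\hspace*{-0.1cm}ns}(\widetilde{\phi})$ to carry the ideal $(\su(\relsp))$ computed from $\genbas$ into the one computed from $\genbas'$, so your map need not descend to the quotients; and even if it did, you would only get an isomorphism between two presentations, whereas the proposition asserts (and the paper proves) that the two presentations are literally the same.

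The repair is short and is where your ``equivariance'' observation should have been aimed. By Proposition~\ref{succpath}, for a fixed planar tree $t$ and leaf $x$ the monomial rule for $\su_x$ is, on the treewise component $t[\gensp]=\bigotimes_{v}\gensp$, the tensor product over the vertices $v$ of $t$ of the basis-free linear maps $\gop\mapsto\svec{\gop}{\prec}$, $\gop\mapsto\svec{\gop}{\succ}$ or $\gop\mapsto\svec{\gop}{\star}$, the choice depending only on the position of $v$ relative to the root-to-$x$ path of $t$. A multilinear map is determined by its values on any one basis, so the linear extensions of the monomial rule from $\calt(\genbas)$ and from $\calt(\genbas')$ coincide with the same canonical map $t[\gensp]\to\mathcal{T}_{\hspace*{-0.1cm}ns}(\widetilde{\gensp})$. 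Therefore $\su_x(r_s)$, hence $\su(\relsp)$ and the ideal, are independent of the basis, and no automorphism of $\mathcal{T}_{\hspace*{-0.1cm}ns}(\widetilde{\gensp})$ is needed; the same argument with Proposition~\ref{tsuccpath} and $\widehat{\gensp}$ handles $\tsu_J$. This is precisely what the paper means by invoking the linearity of the successors together with the treewise tensor module structure.
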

\begin{proof}
Let $\calp:=\mathcal{T}_{\hspace*{-0.1cm}ns}(\widehat{\gensp})/(R)$ be a binary non-symmetric operad. This proposition is straightforward from the linearity of the successors and from the treewise tensor module structure on $\mathcal{T}_{\hspace*{-0.1cm}ns}(\gensp)$ and on $\mathcal{T}_{\hspace*{-0.1cm}ns}(\widehat{\gensp})$.
\end{proof}

We give some examples of successors.

\begin{exam}
{\rm The {\bf dendriform algebra} of
Loday~\mcite{Lo} is defined by two bilinear operations
$\{\prec,\succ\}$ and relations:
$$(x\prec y)\prec z=x\prec(y\star z),\; (x\succ y)\prec z=x\succ(y\prec
z),\; (x\star y)\succ z=x\succ(y\succ z),$$ where
$\star:=\prec+\succ$. It is easy to check that the corresponding operad $\dend$ is the \suc of $\ass$. Similarly, the operad $\quado$ of quadri-algebras of Aguiar and
Loday~\mcite{AL} is the \suc of $\dend$. Furthermore, the operad $\octo$ of octo-algebras of Leroux~\mcite{Le2} is the
 \suc of $\quado$. For $N\geq 2$, the $N$-th power of $\dend$ defined in~\mcite{EG} is the $N$-th \suc of $\dend$.}
\mlabel{ex:dialg}
\end{exam}

\begin{exam}
{\rm Similarly, the \Tsuc of $Ass$ is the operad $\mathit{TriDend}$ of tridendriform algebras defined by Loday and Ronco~\mcite{LR}. The operad $\mathit{Ennea}$ of Ennea-algebras of Leroux~\mcite{Le3} is the  \Tsuc of $\mathit{TriDend}$. For $N\geq 2$, the $N$-th power of $\mathit{TriDend}$ defined in~\mcite{EG} is the $N$-th \Tsuc of $\mathit{TriDend}$. }
\end{exam}

\subsection{The successors of a binary operad}

When $\gensp=\gensp(2)$ is an $\BS$-module concentrated in arity $2$, the free operad $\mathcal{T}(V)$ is generated by the binary trees ``in space'' with vertices labeled by elements in $\gensp$. So we have to refine our arguments.

More precisely, the free operad $\mathcal{T}(\gensp)$ on an $\BS$-module $\gensp=\gensp(2)$ is given by the $\BS$-module
$$ \mathcal{T}(\gensp) := \displaystyle{ \bigoplus_{\textsf{t} \in \mathbb{T} } } \ \textsf{t}[\gensp] \ , $$
where $\mathbb{T}$ denotes the set of isomorphism classes of reduced binary trees, see Appendix C of \cite{LV}, and  where $\textsf{t}[\gensp]$ is the treewise tensor $\BS$-module associated to $\textsf{t}$. This $\BS$-module is explicitly given by
$$  \textsf{t}[\gensp] := \displaystyle{\bigotimes_{v \in \vin(\textsf{t})} } \gensp(\inv(v)) \ , $$
see Section 5.5.1 of \cite{LV}. Notice that $\inv(v)$ is a set. For any finite set $\mathcal{X}$ of cardinal $n$, the definition of $\gensp(\mathcal{X})$ is given by the following coinvariant space
$$ \gensp(\mathcal{X}):= \left( \displaystyle{\bigoplus_{f: \underline{n}\rightarrow \mathcal{X}}} \gensp(n) \right)_{\BS_{n}} \ ,$$
where the sum is over all the bijections from $\underline{n}:=\{ 1, \ldots , n \}$ to $\mathcal{X}$ and where the symmetric group acts diagonally.

To represent a tree $\textsf{t}$ in $\mathbb{T}$ by a planar tree in $\calt$ consists of choosing a total order on the set of inputs of each vertex of $\textsf{t}$. We define an equivalence relation $\sim$ on $\calt$ as follows: two planar binary trees in $\calt$ are equivalent if they represent the same tree in $\mathbb{T}$. It induces a bijection $ \mathbb{T} \cong \calt / \sim$. \\
Moreover, by Section 2.8 of \cite{Ho}, we have $t[\gensp]\cong \mathsf{t}[\gensp]$, for any planar binary tree $t$ in $\calt$ which represents the binary tree $\mathsf{t}$ in $\mathbb{T}$. Therefore, we have
$$\mathcal{T}(\gensp)\cong \displaystyle{ \bigoplus_{t \in \mathfrak{R} } } \ t[\gensp] \ , $$
where $\mathfrak{R}$ is a set of representatives of $\calt / \sim$.
\begin{exam}
{\rm For instance, one set of representatives of \ $\calt / \sim$ is the set of tree monomials defined in Section 2.8 of \cite{Ho}. See also Section 3.1 of \cite{DK}. Another example is a generalization of the trees $\mathrm{I}$, $\mathrm{II}$ and $\mathrm{III}$ given in Section 7.6.3 of \cite{LV}.
}\end{exam}
\begin{lemma}\label{basisfreeop}
Let $\mathfrak{R}$ be a set of representatives of \ $\calt/\sim$ and $\gensp=\gensp(2)$ be an $\BS$-module concentrated in arity $2$, with a linear basis $\genbas$. Then $\mathfrak{R}(\genbas):=\{ \tau \in t(\genbas)\,|\, t \in \mathfrak{R} \}$ is a linear basis of the free operad $\mathcal{T}(\gensp)$.
\end{lemma}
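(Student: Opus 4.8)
The plan is to deduce the statement from two ingredients already isolated in the text: the direct-sum decomposition of the free operad indexed by a chosen set of planar representatives, and the ``obvious'' treewise-tensor basis attached to a single planar tree.

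First I would record that, combining Hoffbeck's isomorphism $t[\gensp]\cong\mathsf{t}[\gensp]$ (for a planar tree $t$ representing $\mathsf{t}\in\mathbb{T}$) with the standard decomposition $\mathcal{T}(\gensp)=\bigoplus_{\mathsf{t}\in\mathbb{T}}\mathsf{t}[\gensp]$ and the bijection $\mathbb{T}\cong\calt/\!\sim$, one obtains the direct-sum decomposition
$$\mathcal{T}(\gensp)\ \cong\ \bigoplus_{t\in\mathfrak{R}}t[\gensp]$$
displayed just before the lemma. Since a linear basis of a direct sum is the disjoint union of linear bases of the summands, it then suffices to check that for each $t\in\mathfrak{R}$ the set $t(\genbas)$ of decorated trees of shape $t$ with vertex labels in $\genbas$ is a linear basis of $t[\gensp]$, and that these sets are pairwise disjoint.

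Second, I would fix $t\in\mathfrak{R}$ and unwind $t[\gensp]\cong\mathsf{t}[\gensp]=\bigotimes_{v\in\vin(t)}\gensp(\inv(v))$. The point is that, $t$ being planar, each vertex $v$ carries a total order on its two inputs, hence a distinguished bijection $\underline{2}\xrightarrow{\sim}\inv(v)$. In the coinvariant description $\gensp(\inv(v))=\bigl(\bigoplus_{f\colon\underline{2}\to\inv(v)}\gensp(2)\bigr)_{\BS_{2}}$ the group $\BS_{2}$ permutes the indexing bijections $f$ freely and transitively, so projection onto the summand indexed by the distinguished $f$ is an isomorphism $\gensp(\inv(v))\cong\gensp(2)=\gensp$ carrying $\genbas$ to a basis of $\gensp(\inv(v))$. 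Tensoring over $v$ gives $t[\gensp]\cong\gensp^{\otimes|\vin(t)|}$, whose tensor-product basis $\bigotimes_{v\in\vin(t)}\genbas$ is exactly $t(\genbas)$: a decorated tree of shape $t$ is precisely the datum of one element of $\genbas$ at each vertex, the leaf labelling of $t$ being fixed. (In the degenerate case where $t$ is the trivial tree, $\vin(t)=\emptyset$ and both sides equal $\bfk$, with the single element of $t(\genbas)$ as basis.)

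Finally, distinct elements of $\mathfrak{R}$ are non-$\sim$-equivalent planar trees, hence yield decorated trees of distinct underlying shape; so $\mathfrak{R}(\genbas)=\coprod_{t\in\mathfrak{R}}t(\genbas)$ is genuinely a disjoint union, and therefore a linear basis of $\mathcal{T}(\gensp)$. The only step that goes beyond bookkeeping is the identification $\gensp(\inv(v))\cong\gensp$ at a single binary vertex, i.e.\ checking that the $\BS_{2}$-action on indexing bijections is free so that the coinvariant space collapses to one copy of $\gensp(2)$; I expect this to be the main (and essentially the only) obstacle, everything else being an appeal to the structural results of \cite{LV} and \cite{Ho}.
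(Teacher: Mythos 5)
Your argument is correct and follows essentially the same route as the paper: the paper's one-line proof simply combines the decomposition $\mathcal{T}(\gensp)\cong\bigoplus_{t\in\mathfrak{R}}t[\gensp]$ established just before the lemma with the fact (recorded in Section 2.1, via the identification $t[\gensp]\cong\mathsf{t}[\gensp]$ from \cite{Ho}) that $t(\genbas)$ is a basis of $t[\gensp]$ for a planar binary tree $t$. Your only addition is to spell out the single-vertex identification $\gensp(\inv(v))\cong\gensp(2)$ by noting that the $\BS_2$-action on indexing bijections is free and transitive, which the paper leaves to the cited references.
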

\begin{proof}
According to Section 2.1, when $t$ is a planar binary tree, $t(\genbas)$ is a basis of $t[\gensp]$.
\end{proof}
\begin{defn}
{\rm Let $\opd=\mathcal{T}(\gensp)/ (\relsp)$ be a binary operad on the $\BS$-module $\gensp=\gensp(2)$, concentrated in arity $2$ with a $\bfk[\BS_{2}]$-basis $\genbas$, such that $\relsp$ is spanned, as an $\BS$-module, by locally homogeneous elements of the form
\begin{equation}
\relsp:=\left\{r_s:=\sum_i c_{s,i}\tau_{s,i}\ \Big|\ c_{s,i}\in\bfk, \tau_{s,i} \in \{ t(\genbas),t \in \mathfrak{R} \}, \ 1\leq s\leq k, k\geq 1 \right\} \ ,
\mlabel{eq:pres'}
\end{equation}
where $\mathfrak{R}$ is a set of representatives of $\calt / \sim$.
\begin{enumerate}
\item The {\bf \suc} of $\opd$ is defined to be the binary operad $\su(\opd)=\mathcal{T}( \widetilde{\gensp}  )/ (\su(\relsp))$ where
the $\BS_2$-action on $\widetilde{\gensp}$ is given by
\begin{equation}
\svec{\gop}{\prec}^{(12)}:=\svec{\gop^{(12)}}{\succ} \; ,\quad
\svec{\gop}{\succ}^{(12)}:=\svec{\gop^{(12)}}{\prec}\; , \;  \gop\in \gensp,
\notag 
\end{equation}
and the space of relations is generated, as an $\BS$-module, by
\begin{equation}
\su(R):= \left\lbrace  \su_x(r_{s}):=\sum_ic_{s,i}\su_{x}(t_{s,i})\;\Big| \; x\in \lin(t_{s,i}), \; 1\leq s\leq
k  \right\rbrace . \mlabel{eq:sup'}
\end{equation}
Note that, by our assumption,  for a fixed $s$, $\lin(t_{s,i})$ are the same for all $i$.
The {\bf $N$-th \suc} ($N\geq 2$) of $\mathcal{P}$, which is
denoted by $\su^{N}(\calp)$, is defined as the \suc of
the {\bf $(N-1)$-th \suc} of the operad, where the {\bf first
 \suc} of the operad is just the \suc of the operad.
\item The {\bf \Tsuc} of $\opd$ is defined to be the binary operad $\tsu(\opd)=\mathcal{T}( \widehat{\gensp}  )/ (\tsu(\relsp))$ where
the $\BS_2$-action on $\widehat{\gensp}$ is given by
\begin{equation}
\svec{\gop}{\prec}^{(12)}:=\svec{\gop^{(12)}}{\succ} \; ,\quad \svec{\gop}{\succ}^{(12)}:=\svec{\gop^{(12)}}{\prec}\; , \quad
\svec{\gop}{\cdot}^{(12)}:=\svec{\gop^{(12)}}{\cdot} \; ,\;  \gop\in \gensp,
\notag 
\end{equation}
and the space of relations is generated, as an $\BS$-module, by
\begin{equation}
\tsu(R):= \left\lbrace  \tsu_J(r_{s}):=\sum_ic_{s,i}\tsu_{J}(t_{s,i})\;\Big| \; \emptyset\neq J\subseteq \lin(t_{s,i}), \; 1\leq s\leq
k  \right\rbrace .
\notag 
\end{equation}
The {\bf $N$-th \Tsuc} ($N\geq 2$) of $\mathcal{P}$ is defined similarly to the $N$-th \suc of $\mathcal{P}$.
\end{enumerate}
}
\end{defn}
\begin{prop}
The \suc (resp. \Tsuc) of a binary operad $\opd=\mathcal{T}(\gensp)/ (R)$ depends neither on the $\bfk[\BS_{2}]$-basis $\genbas$ of $\gensp$ nor on set of representatives $\mathfrak{R}$ of \ $\calt / \sim$\ .
\end{prop}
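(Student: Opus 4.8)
The plan is to reduce the statement to the corresponding result for nonsymmetric operads, namely Proposition~\ref{indgenbas}, by carefully tracking how the two choices entering the definition---the $\bfk[\BS_2]$-basis $\genbas$ of $\gensp$ and the set of representatives $\mathfrak{R}$ of $\calt/\sim$---interact with the successor constructions. First I would treat the two choices separately. For the independence of $\mathfrak{R}$: given two sets of representatives $\mathfrak{R}$ and $\mathfrak{R}'$, each planar tree $t\in\mathfrak{R}$ is equivalent to a unique $t'\in\mathfrak{R}'$, and the isomorphism $t[\gensp]\cong t'[\gensp]$ of Section~2.8 of~\cite{Ho} is induced by the $\BS$-action permuting the inputs of the vertices. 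The key point is that, by Lemma~\ref{lem:sactsu} and Lemma~\ref{lem:sacttsu}, the operations $\su_x$ and $\tsu_J$ are equivariant for this input-permutation action (that is, $\su_{\sigma^{-1}(x)}(\tau^\sigma)=\su_x(\tau)^\sigma$, and likewise for $\tsu$), so applying a successor and then changing representatives gives the same $\BS$-submodule of $\mathcal{T}(\widetilde\gensp)$ (resp. $\mathcal{T}(\widehat\gensp)$) as changing representatives first and then applying the successor. Hence the ideal $(\su(\relsp))$ (resp. $(\tsu(\relsp))$) is unchanged, and so is the quotient operad.

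Next, for the independence of the $\bfk[\BS_2]$-basis $\genbas$: the successor constructions $\widetilde{(-)}$, $\widehat{(-)}$ and the operations $\su_x$, $\tsu_J$ were all defined so as to extend $\bfk$-linearly from $t(\genbas)$ to $t[\gensp]$, and this linear extension is canonical---exactly the content of the argument used in the proof of Proposition~\ref{indgenbas}. The only extra feature in the symmetric setting is the prescribed $\BS_2$-action on $\widetilde\gensp$ and $\widehat\gensp$, given by $\svec{\gop}{\prec}{}^{(12)}=\svec{\gop^{(12)}}{\succ}$, $\svec{\gop}{\succ}{}^{(12)}=\svec{\gop^{(12)}}{\prec}$ (and $\svec{\gop}{\cdot}{}^{(12)}=\svec{\gop^{(12)}}{\cdot}$ for $\tsu$). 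I would check that this action is defined intrinsically in terms of the $\BS_2$-module $\gensp$ (the map $\gop\mapsto\gop^{(12)}$ does not reference any basis), so $\widetilde\gensp$ and $\widehat\gensp$ are well-defined $\BS_2$-modules independent of $\genbas$; then the $\BS$-submodule spanned by $\{\su_x(r_s)\}$ depends only on the $\BS$-submodule $\relsp$, not on the particular locally homogeneous spanning set written in terms of $\genbas$.

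Combining the two reductions, the proof is essentially a two-line appeal: the successors extend linearly and treewise-tensorially (as in Proposition~\ref{indgenbas}), and they are $\BS$-equivariant for input permutations (Lemmas~\ref{lem:sactsu} and~\ref{lem:sacttsu}), which together make the defining ideals $(\su(\relsp))$ and $(\tsu(\relsp))$ independent of both choices. I expect the main obstacle to be purely bookkeeping rather than conceptual: one must be careful that the isomorphism $t[\gensp]\cong\mathsf{t}[\gensp]$ for two different planar representatives of the same tree $\mathsf{t}$ is realized by precisely the input-permutation action for which Lemmas~\ref{lem:sactsu} and~\ref{lem:sacttsu} were stated, so that equivariance applies verbatim; with that compatibility in hand, the verification that changing representatives commutes with taking successors---and hence preserves the generated ideal---is routine. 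I would therefore write the proof as: "This follows from Proposition~\ref{indgenbas} together with Lemmas~\ref{lem:sactsu} and~\ref{lem:sacttsu}," and then expand the two equivariance checks in a sentence each.
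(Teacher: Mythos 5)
Your overall plan follows the same outline as the paper's proof (treat the two choices separately; handle the basis by linearity and the treewise tensor structure as in Proposition~\ref{indgenbas}; handle the representatives by showing that successors agree on different planar representatives of the same element), and your basis-independence half, including the remark that the $\BS_2$-action on $\widetilde{\gensp}$ and $\widehat{\gensp}$ is defined without reference to $\genbas$, is fine. The gap is in the representative-independence half: Lemmas~\ref{lem:sactsu} and~\ref{lem:sacttsu} are not the relevant equivariance. Those lemmas concern the operadic $\BS_n$-action $\tau\mapsto\tau^{\sigma}$, which permutes the decorations of the \emph{leaves} while leaving the underlying planar tree and all vertex labels unchanged. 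Passing from a representative $t\in\mathfrak{R}$ to the representative $t'\in\mathfrak{R}'$ of the same abstract tree is a different operation: at each vertex where the planar orders differ, the two input subtrees are swapped and the vertex label $\gop$ is simultaneously replaced by $\gop^{(12)}$ (this is the identification coming from the coinvariants defining $\gensp(\inv(v))$), the leaf labels travelling with the subtrees. Already for the $2$-corolla the two operations disagree: the representative change turns the corolla labeled $\gop$ with leaves $(1,2)$ into the corolla labeled $\gop^{(12)}$ with leaves $(2,1)$, which is not $\tau^{(12)}$. So the compatibility you flag at the end as ``the main obstacle'' is precisely the point that the cited lemmas do not give, and the proposed two-line proof does not go through as written.

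What actually makes the step work --- and what the paper is implicitly using when it asserts $\su_i(\tau)=\su_i(\tau')$ --- is the compatibility of the relabeling rules of Propositions~\ref{succpath} and~\ref{tsuccpath} with the $\BS_2$-action imposed on $\widetilde{\gensp}$ and $\widehat{\gensp}$. If the path to the leaf $x$ turns left at a vertex of $\tau$ labeled $\gop$, then at the corresponding vertex of $\tau'$ it turns right and the label there is $\gop^{(12)}$; the successor rules produce $\svec{\gop}{\prec}$ and $\svec{\gop^{(12)}}{\succ}=\svec{\gop}{\prec}^{(12)}$ respectively, which represent the same element once the planar orders are re-identified. Likewise an off-path vertex receives $\svec{\gop}{\star}$ in $\tau$ and $\svec{\gop^{(12)}}{\star}=\svec{\gop}{\star}^{(12)}$ in $\tau'$, and for the \Tsuc a splitting vertex receives $\svec{\gop}{\cdot}$ and $\svec{\gop^{(12)}}{\cdot}=\svec{\gop}{\cdot}^{(12)}$. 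Checking this vertexwise compatibility (directly from the path description, or by induction on the grafting decomposition) gives $\su_x(\tau)=\su_x(\tau')$ and $\tsu_J(\tau)=\tsu_J(\tau')$, after which linearity and Lemma~\ref{basisfreeop} finish the argument exactly as in the paper. So replace the appeal to Lemmas~\ref{lem:sactsu} and~\ref{lem:sacttsu} by this vertexwise check; those lemmas play their role elsewhere (in the Manin product theorems), not here.
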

\begin{proof}
Notice that if $\genbas$ is a $\bfk[\BS_{2}]$-basis of $\gensp$ then the set $\genbas\otimes \BS_{2}$ is a linear basis of $\gensp$.

The independence with respect to the choice of a $\bfk[\BS_{2}]$-basis of $\gensp$ is a consequence of the linearity of the \suc (resp. \Tsuc) and of the treewise tensor module structure.

Next let $\genbas$ be a $\bfk[\BS_{2}]$-basis of $\gensp$. Let $\mathfrak{R}$ and
$\mathfrak{R}'$ be two sets of representatives of $\calt / \sim$.
Let $\tau$ in $t(\genbas\otimes\BS_{2})$ and $\tau'$ in $t'(\genbas\otimes\BS_{2})$, where
$t \in  \mathfrak{R}$  and $t' \in \mathfrak{R}'$, be two labeled
planar binary trees which arise from the same element in
$\mathcal{T}(\gensp)$, through the bijections given previously in this
section. Then, for all $i \in \lin(\tau)=\lin(\tau')$ (resp. for any nonempty subset $J\subseteq \lin(\tau)=\lin(\tau')$), we have
$\su_{i}(\tau)=\su_{i}(\tau')$ (resp. $\tsu_J(\tau)=\tsu_J(\tau')$). Finally, we conclude the proof
using Lemma \ref{basisfreeop} and the linearity of the
 \suc (resp. \Tsuc).
\end{proof}

\subsection{Relations with the non-symmetric framework}\label{sect:suns-s}
We denote by $\mathsf{Op}$ (resp. by $\mathsf{Ns \ Op}$) the category of operads (resp. of non-symmetric operads).
There is a forgetful functor
$$ \begin{array}{rclc}
\mathsf{Op} & \rightarrow & \mathsf{Ns \ Op} & \\
\calp & \mapsto & \overline{\calp} & ,
\end{array}$$
where $\overline{\calp}_{n}:= \calp(n)$. In other words, we forget the $\BS_{n}$-module structure.\\
This functor admits a left adjoint
$$ \begin{array}{rclc}
\mathsf{Ns \ Op} & \rightarrow & \mathsf{ Op} & \\
\calp & \mapsto & \mathit{Reg}(\calp) & ,
\end{array}$$
where $\mathit{Reg}(\calp)(n):= \calp_{n}\otimes \bfk[\BS_{n}]$. Such operads are called \emph{regular operads}, see \cite[Section 5.8.12]{LV} for more details.
Notice that a presentation of the regular operad associated to a binary non-symmetric operad $\calp~=~\mathcal{T}_{ns}(\gensp)/(R)$, where $\mathcal{T}_{ns}(\gensp)$ is the free non-symmetric operad on $\gensp=\gensp(2)$ and $R=\{ R_{n} \}_{n \in \mathbb{N}}$, is given by
$$\mathit{Reg}(\calp)= \mathcal{T}(\gensp \otimes \bfk[\BS_{2}])/ (R_{n} \otimes \bfk[\BS_{n}], n \in \mathbb{N} ) \ . $$

\begin{prop}\label{succregop}
Let $\calp=\mathcal{T}_{ns}(\gensp)/(R)$ be a binary non-symmetric operad. We have
$$ \su(\mathit{Reg}(\calp)) \cong \mathit{Reg}(\su(\calp)) \ . $$
\end{prop}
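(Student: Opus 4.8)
The plan is to unwind both sides of the claimed isomorphism to explicit presentations and check that the generators and relations match. First I would recall the two presentations involved. On one hand, $\mathit{Reg}(\su(\calp))$: since $\su(\calp)=\mathcal{T}_{\hspace*{-0.1cm}ns}(\widetilde\gensp)/(\su(R))$ is a binary non-symmetric operad, applying $\mathit{Reg}$ gives $\mathcal{T}(\widetilde\gensp\otimes\bfk[\BS_2])/(\su(R)_n\otimes\bfk[\BS_n],\ n\in\NN)$. On the other hand, $\su(\mathit{Reg}(\calp))$: here $\mathit{Reg}(\calp)=\mathcal{T}(\gensp\otimes\bfk[\BS_2])/(R_n\otimes\bfk[\BS_n])$ is a binary (symmetric) operad to which we apply the symmetric-operad version of $\su$, yielding $\mathcal{T}(\widetilde{\gensp\otimes\bfk[\BS_2]})/(\su(R_n\otimes\bfk[\BS_n]))$. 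So the first step is to identify the $\BS_2$-modules $\widetilde\gensp\otimes\bfk[\BS_2]$ and $\widetilde{\gensp\otimes\bfk[\BS_2]}$: both are, as vector spaces, $\gensp\otimes\bfk[\BS_2]\otimes(\bfk\prec\oplus\bfk\succ)$, and I would check the $\BS_2$-actions agree. The action on $\widetilde{\gensp\otimes\bfk[\BS_2]}$ is the one defined in the symmetric-operad \suc\ definition, $\svec{\gop}{\prec}^{(12)}=\svec{\gop^{(12)}}{\succ}$, where now $\gop$ ranges over $\gensp\otimes\bfk[\BS_2]$ with its regular action; the action on $\widetilde\gensp\otimes\bfk[\BS_2]$ is diagonal with the regular action on the $\bfk[\BS_2]$ factor and trivial on $\widetilde\gensp$ — one verifies these give isomorphic $\BS_2$-modules via the swap, matching the twist convention so that the free operads $\mathcal{T}(-)$ on the two sides coincide.

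Next I would match the spaces of relations. By Proposition~\ref{indgenbas} and the basis-independence established for the symmetric case, I may compute everything on a chosen basis: fix a $\bfk$-basis $\genbas$ of $\gensp$, so that $\genbas\times\BS_2$ is a basis of $\gensp\otimes\bfk[\BS_2]$, and fix a set $\mathfrak R$ of representatives of $\calt/\sim$. The key point is that for a labeled planar tree $\tau$, the \suc\ $\su_x(\tau)$ as defined for non-symmetric operads (Definition~\ref{de:vtilo}) and as defined for symmetric operads coincide once one tracks how a vertex label in $\gensp$ versus in $\gensp\otimes\bfk[\BS_2]$ transforms — this is exactly what Proposition~\ref{succpath} makes transparent: $\su_x$ only relabels a vertex $\gop\mapsto\svec{\gop}{\prec}$, $\svec{\gop}{\succ}$, or $\svec{\gop}{\star}$ according to the turning of the path to $x$, and this rule is insensitive to whether the decoration carries an extra $\BS_2$-label. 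Hence for each relator $r_s=\sum_i c_{s,i}\tau_{s,i}$ of $\calp$ and each $\sigma\in\BS_{n_s}$, the element $\su_x(r_s^\sigma)$ computed in the symmetric picture equals $\su_{\sigma^{-1}(x)}(r_s)^\sigma$ by Lemma~\ref{lem:sactsu} (its analogue for the regular action), so the $\BS$-submodule generated by $\{\su_x(r_s\otimes\sigma)\}$ is precisely $\{\su_x(r_s)_n\otimes\bfk[\BS_n]\}$, i.e. $\su(R)_n\otimes\bfk[\BS_n]$. That is the heart of the argument.

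Finally I would assemble these two identifications — of generating $\BS_2$-modules and of relation $\BS$-modules — into an isomorphism of operads $\su(\mathit{Reg}(\calp))\cong\mathit{Reg}(\su(\calp))$, invoking Lemma~\ref{basisfreeop} to know that $\mathfrak R(\genbas)$ gives honest bases of the relevant free operads so that the comparison of relations at the level of spanning sets suffices. One can alternatively phrase the whole thing functorially: the isomorphism is natural and realizes the assertion that $\su$ commutes with the left adjoint $\mathit{Reg}$ to the forgetful functor, which is plausible a priori because $\su$ is defined "treewise" in a way that does not interact with the $\BS_n$-coinvariants beyond the arity-$2$ generators.

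I expect the main obstacle to be purely bookkeeping: carefully reconciling the twist convention $\svec{\gop}{\prec}^{(12)}=\svec{\gop^{(12)}}{\succ}$ with the diagonal $\BS_2$-action coming from $\mathit{Reg}$, and checking that the symmetric-operad \suc, which is a priori defined via the choice of representatives $\mathfrak R$ and the coinvariant description of $\gensp(\calx)$, restricts correctly on regular operads to the naive relabeling — in other words, that no spurious symmetrization is introduced when passing from planar trees to trees "in space." Once the conventions are pinned down, the verification is routine, and I would keep the write-up short by citing Propositions~\ref{succpath} and~\ref{indgenbas} and Lemma~\ref{lem:sactsu} rather than recomputing.
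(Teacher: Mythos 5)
Your argument is correct and follows essentially the same route as the paper: the paper's proof is simply the observation that, as an $\BS_2$-module, the generating operations of $\mathit{Reg}(\calp)$ are spanned by $\gensp$ and, as an $\BS$-module, its relations are spanned by $R$, so applying the (symmetric) \suc\ yields generators $\widetilde{\gensp}$ and relations $\su(R)$, i.e.\ exactly the presentation of $\mathit{Reg}(\su(\calp))$. Your write-up just makes explicit the bookkeeping (identification of the twisted $\BS_2$-actions, matching of the relation $\BS$-modules via Proposition~\ref{succpath} and Lemma~\ref{lem:sactsu}) that the paper leaves implicit.
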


\begin{proof}
As an $\BS_{2}$-module, the space of generating operations of $\mathit{Reg}(\calp)$ is spanned by $\gensp$, so the space of generating operations of $\su(\mathit{Reg}(\calp))$ is spanned by $\widetilde{\gensp}$. As an $\BS$-module, the space of relations of $\mathit{Reg}(\calp)$ is spanned by $R$, so the space of relations of $\su(\mathit{Reg}(\calp))$ is spanned by $\su(R)$.
\end{proof}

\subsection{Examples of successors}
\mlabel{ss:exam}
We give some examples of successors of binary operads.

Let $\gensp=\gensp(2)$ be an $\BS_{2}$-module of generating operations.
Then we have
$$\mathcal{T}(V)(3)=(V\otimes_{\BS_2}(V\otimes\bfk\oplus\bfk\otimes V))\otimes_{\BS_2}\bfk[\BS_3].$$
$\mathcal{T}(V)(3)$ can be identify with 3 copies of $V\otimes V$. We denote them by $V\circ_{{\rm I}}V,V\circ_{{\rm II}}V$ and $V\circ_{{\rm III}}V$, following the convention in~\mcite{Va}.
Then, as a vector space, $\mathcal{T}(V)(3)$ is generated by elements of the form
\begin{equation}
\gop \circ_{\rmi} \gopb (\oto (x\gop y)\gopb z), \gop\circ_{\rmii} \gopb (\oto (y\gopb z)\gop x), \gop\circ_{\rmiii} \gopb (\oto (z\gopb x)\gop y), \forall\gop, \gopb\in \genbas.
\mlabel{eq:type}
\end{equation}

For an operad where the space of generators $V$ is equal to $\bfk[\BS_2]=\mu.\bfk\oplus\mu'.\bfk$ with $\mu.(12)=\mu'$, we will adopt the convention in~\cite[p. 129]{Va} and denote the 12 elements of $\mathcal{T}(V)(3)$ by $v_i, 1\leq i\leq 12,$ in the following table.
\begin{center}
\begin{tabular}{|c|c|c|c|c|c|}
  \hline
  $v_1$ &  $\mu\circ_{{\rm I}}\mu\leftrightarrow(xy)z$ & $v_{5}$ & $\mu\circ_{{\rm III}}\mu\leftrightarrow(zx)y$ & $v_{9}$ & $\mu\circ_{{\rm II}}\mu\leftrightarrow(yz)x$ \\ \hline
 $v_{2}$ & $\mu'\circ_{{\rm II}}\mu\leftrightarrow x(yz)$ & $v_{6}$ & $\mu'\circ_{{\rm I}}\mu\leftrightarrow z(xy)$ & $v_{10}$ & $\mu'\circ_{{\rm III}}\mu\leftrightarrow y(zx)$ \\ \hline
  $v_{3}$ & $\mu'\circ_{{\rm II}}\mu'\leftrightarrow x(zy)$ & $v_{7}$ & $\mu'\circ_{{\rm I}}\mu'\leftrightarrow z(yx)$ & $v_{11}$ & $\mu'\circ_{{\rm III}}\mu'\leftrightarrow y(xz)$ \\ \hline
  $v_{4}$ & $\mu\circ_{{\rm III}}\mu'\leftrightarrow(xz)y$ & $v_{8}$ & $\mu\circ_{{\rm II}}\mu'\leftrightarrow(zy)x$ & $v_{12}$ & $\mu\circ_{{\rm I}}\mu'\leftrightarrow(yx)z$ \\
  \hline
\end{tabular}
\end{center}

\subsubsection{Examples of \sucs}
Recall that a {\bf (left) Zinbiel algebra}~\mcite{Lo} is defined by a bilinear operation $\cdot$ and a relation $$(x\cdot y+y\cdot x)\cdot
z=x\cdot(y\cdot z).$$
\begin{prop}
The operad $\zinb$ is the \suc of the opeard $\comm$.
\mlabel{pp:zinb}
\end{prop}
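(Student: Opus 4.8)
The plan is to compute the successor of $\comm$ directly from the definitions and match the resulting presentation with that of $\zinb$. Recall that the operad $\comm$ is presented as $\mathcal{T}(V)/(R)$ where $V=V(2)$ is the one-dimensional $\BS_2$-module spanned by a single commutative operation $\gop$ (so $\gop^{(12)}=\gop$), and the space of relations $R$ is spanned, in arity $3$, by the single associativity relation, which in the notation of~\eqref{eq:type} reads
\begin{equation}
r = \gop\circ_{\rmi}\gop - \gop\circ_{\rmii}\gop \ ,
\notag
\end{equation}
i.e. $(x\gop y)\gop z = x\gop(y\gop z)$ (together with its $\BS_3$-translates, which are consequences given commutativity). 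By definition, $\su(\comm) = \mathcal{T}(\widetilde{V})/(\su(R))$, where $\widetilde{V}=V\otimes(\bfk\prec\oplus\,\bfk\succ)$. Since $\gop^{(12)}=\gop$, the $\BS_2$-action on $\widetilde V$ gives $\svec{\gop}{\prec}^{(12)}=\svec{\gop}{\succ}$, so the two generating operations $\prec:=\svec{\gop}{\prec}$ and $\succ:=\svec{\gop}{\succ}$ are exchanged by the transposition. Writing $x\prec y$ for the first and using $y\succ x = x\prec y$, one sees $\widetilde V$ is spanned over $\bfk[\BS_2]$ by the single operation $\prec$, matching the single operation $\cdot$ of a Zinbiel algebra (with $x\cdot y := x\prec y$). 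So the operads have the same generators.

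The heart of the argument is to apply Proposition~\ref{succpath} to the single relation $r$ and check we recover the Zinbiel relation. The relation $r$ involves two $3$-trees, the left comb $\tau_1$ (with $(x\gop y)\gop z$) and the right comb $\tau_2$ (with $x\gop(y\gop z)$), each having leaves labeled $1,2,3$. By definition $\su(R)$ is spanned by $\su_x(r)=\su_x(\tau_1)-\su_x(\tau_2)$ for $x\in\{1,2,3\}$. I would compute each $\su_x(\tau_i)$ using the path description: for $\tau_1$, the root vertex is below, the upper vertex is its left child; for $\tau_2$, the root is below, the upper vertex is its right child. For instance, $\su_1(\tau_1)$: the path from the root to leaf $1$ turns left at the root (relabel $\svec{\gop}{\prec}$) and left at the upper vertex (relabel $\svec{\gop}{\prec}$), giving $(x\prec y)\prec z$; while $\su_1(\tau_2)$: the path to leaf $1$ turns left at the root (relabel $\svec{\gop}{\prec}$) and the upper vertex is off the path (relabel $\svec{\gop}{\star}$), giving $x\prec(y\star z)$ where $\star=\prec+\succ$. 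Thus $\su_1(r)$ yields $(x\prec y)\prec z = x\prec(y\star z)$. Rewriting $y\succ z = z\prec y$, this is exactly $(x\cdot y)\cdot z = x\cdot(y\cdot z + z\cdot y)$, the (left) Zinbiel relation. One then checks that $\su_2(r)$ and $\su_3(r)$, after using the $\BS_2$-identification $y\succ x = x\prec y$ and the $\BS_3$-action (Lemma~\ref{lem:sactsu}), generate the same $\BS_3$-submodule — indeed they are $\BS_3$-translates of the relation just obtained, since the original $r$ together with commutativity already accounts for all $\BS_3$-translates in $\comm$.

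Finally I would invoke Proposition~\ref{indgenbas} (independence of the basis) and the fact that both presentations are binary quadratic to conclude that the identity on generators $\cdot\leftrightarrow\prec$ extends to an isomorphism $\su(\comm)\cong\zinb$ of operads. The main obstacle I anticipate is purely bookkeeping: one must be careful with the $\BS_2$-module structure on $\widetilde V$ (keeping straight which planar representative of a tree in $\treesp$ one uses, cf. the discussion of $\calt/\sim$) and verify that the three elements $\su_x(r)$, $x=1,2,3$, span precisely the $\BS_3$-submodule cutting out the Zinbiel relations and no more — i.e. that no extra relations are imposed and none are missed. This is where a clean choice of representatives $\mathfrak{R}$ and a careful application of Lemma~\ref{lem:sactsu} pays off, reducing the verification to the single computation of $\su_1(r)$ above.
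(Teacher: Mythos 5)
Your proposal is correct and follows essentially the same route as the paper's proof: take the single $\BS_3$-generator $\gop\circ_{\rm I}\gop-\gop\circ_{\rm II}\gop$ of the relations of $\comm$, compute its \suc with respect to each leaf using $\prec^{(12)}=\succ$, and match the resulting presentation with that of $\zinb$. Two minor points of bookkeeping: expressing the relations in terms of $\prec$ yields the opposite (right) form $(x\cdot y)\cdot z=x\cdot(y\cdot z+z\cdot y)$ rather than the left Zinbiel relation as stated in the paper (the paper writes everything with $\succ$, which is equivalent since $(12)$ exchanges the two generators), and $\su_2(r)$, $\su_3(r)$ are not single $\BS_3$-translates of $\su_1(r)$ but $\bfk$-combinations of translates of it --- which is all that is needed, since it still shows the three successor relations generate the same $\BS_3$-submodule as the Zinbiel relation.
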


\begin{proof}
Let $\gop$ be the generating operation of the operad $\comm$. Set $\prec:=\svec{\gop}{\prec}$ and
$\succ:=\svec{\gop}{\succ}$. Since $\svec{\gop}{\prec}^{(12)}=\svec{\gop^{(12)}}{\succ}=\svec{\gop}{\succ}$, we have
$\prec^{(12)}=\succ$.
The space of relations of $\comm$ is generated as an $\BS_{3}$-module by $$v_1-v_9= \gop \circ_{{\rm I}} \gop - \gop \circ_{{\rm II}} \gop \ . $$
Then we have
$$\su_x(v_1-v_9)=z\succ(y\succ x)-(y\succ z+z\succ y)\succ x;$$
$$\su_y(v_1-v_9)=z\succ(x\succ y)-x\succ(z\succ y);$$
$$\su_z(v_1-v_9)=(x\succ y+y\succ x)\succ z-x\succ(y\succ z).$$

Replacing the operation $\succ$ by $\cdot$, we get $\su(\comm)=\zinb$.
\end{proof}

Also recall that a
{\bf right pre-Lie algebra} is defined by one bilinear operation $\cdot$ and one relation:
$$ (x\cdot y)\cdot z-x\cdot(y\cdot z)=(x\cdot z)\cdot y-x\cdot(z\cdot y) \ .$$
The associated operad is denoted by $\prelie$.

\begin{prop}
The operad $\prelie$ is the
 \suc of the operad $\lie$.
\mlabel{pp:prelie}
\end{prop}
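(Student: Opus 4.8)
The plan is to proceed in close analogy with the proof of Proposition~\ref{pp:zinb}, replacing the commutative operad by the Lie operad and the Zinbiel target by the right pre-Lie operad. First I would fix $\gop$ to be the generating (antisymmetric) operation of $\lie$, so that $\gop^{(12)}=-\gop$, and set $\prec:=\svec{\gop}{\prec}$, $\succ:=\svec{\gop}{\succ}$. The $\BS_2$-action on $\widetilde{\gensp}$ then gives $\prec^{(12)}=\svec{\gop^{(12)}}{\succ}=-\succ$ and $\succ^{(12)}=-\prec$, which is the key bookkeeping fact; in particular a single generating operation in $\lie$ produces a \emph{pair} of operations $\prec,\succ$ in $\su(\lie)$ related by this twisted symmetry, matching the single operation $\cdot$ of a pre-Lie algebra after one identifies $x\cdot y$ with one of them (say $x\succ y$, so that $y\prec x = -\,x\succ y$, i.e.\ $\prec$ and $\succ$ are not independent — this is the analogue of the phenomenon in the Zinbiel case).

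Next I would write down a generating set, as an $\BS_3$-module, for the space of relations $R$ of $\lie$. The Lie operad is the quotient of $\mathcal{T}(\gensp)$ by antisymmetry (already built into $\gensp=\gensp(2)$ being the sign representation) together with the Jacobi identity, which in the $v_i$/type-$\rmi,\rmii,\rmiii$ notation of Section~\ref{ss:exam} is the cyclic sum $\gop\circ_{\rmi}\gop+\gop\circ_{\rmii}\gop+\gop\circ_{\rmiii}\gop$ corresponding to $(xy)z+(yz)x+(zx)y=0$. Then I would compute $\su_x$, $\su_y$, $\su_z$ of this single relator using Proposition~\ref{succpath} (relabel vertices on the path to the chosen leaf by $\prec$ or $\succ$ according to left/right turns, and the off-path vertex by $\svec{\gop}{\star}=\prec+\succ$). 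Each of the three choices of leaf yields one identity among $\prec,\succ$-monomials in $x,y,z$; using $\succ^{(12)}=-\prec$ to eliminate $\prec$ in favour of $\cdot:=\succ$ (or whichever convention makes the bookkeeping cleanest), these three identities should collapse — after accounting for the $\BS_3$-action, which by Lemma~\ref{lem:sactsu} permutes the three among themselves up to sign — to the \emph{single} right pre-Lie relation $(x\cdot y)\cdot z-x\cdot(y\cdot z)=(x\cdot z)\cdot y-x\cdot(z\cdot y)$.

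I expect the main obstacle to be purely organizational rather than conceptual: one must be careful about (a) the sign conventions coming from $\gop^{(12)}=-\gop$ propagating through the $\svec{\gop}{\prec}^{(12)}=\svec{\gop^{(12)}}{\succ}$ rule, and (b) verifying that the three relators $\su_x(r),\su_y(r),\su_z(r)$ generate, as an $\BS_3$-module, exactly the same subspace as the single pre-Lie relator and its $\BS_3$-translates — i.e.\ that nothing extra is imposed and nothing is missed. Concretely, I would check that two of the three $\su$-relators are $\BS_3$-images (up to sign) of the third, so that effectively one relation survives, and then match coefficients monomial-by-monomial with the pre-Lie axiom. A clean way to package this is to observe that $\su(\lie)$ has underlying $\BS$-module $\widetilde{\gensp}$ which, modulo the symmetry $\succ^{(12)}=-\prec$, is one-dimensional in arity $2$ with the regular-representation-free structure of a single operation, and then cite or re-derive that the arity-$3$ relation space is $2$-dimensional inside the $12$-dimensional $\mathcal{T}(\gensp)(3)$, exactly as for $\prelie$; dimension-counting plus exhibiting the explicit identification finishes the proof.
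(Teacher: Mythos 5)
Your proposal is correct and follows essentially the same route as the paper's own proof: set $\prec^{(12)}=-\succ$ from the antisymmetry of the bracket, apply $\su_x,\su_y,\su_z$ to the Jacobi relator $v_1+v_5+v_9$ using Proposition~\ref{succpath}, eliminate one of the two operations via this twisted $\BS_2$-symmetry, and observe (Lemma~\ref{lem:sactsu}) that the three resulting relators are $\BS_3$-translates of the single right pre-Lie relator. One small correction to your optional ``dimension-counting'' packaging: the arity-$3$ relation space of $\prelie$ is $3$-dimensional, not $2$-dimensional, inside the $12$-dimensional $\mathcal{T}(\gensp)(3)$ (since $\dim\prelie(3)=9$); this does not affect your main, direct computation.
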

\begin{proof}
Let $\mu$ be the generating operation of the operad $\lie$. Set $\prec:=\svec{\mu}{\prec}$ and
$\succ:=\svec{\mu}{\succ}$. Since $\svec{\mu}{\prec}^{(12)}=\svec{\mu^{(12)}}{\succ}=-\svec{\mu}{\succ}$, we have
$\prec^{(12)}=-\succ$. The space of relations of $\lie$ is generated as an $\BS_{3}$-module by $$v_1+v_5+v_9=\mu\circ_{{\rm I}}\mu+\mu\circ_{{\rm II}}\mu+\mu\circ_{{\rm III}}\mu \ .$$
Then we have
\begin{eqnarray*}
\su_x(v_1+v_5+v_9)&=&(x\prec y)\prec z-(x\prec z)\prec y-x\prec(y\prec z-z\prec y);\\
\su_y(v_1+v_5+v_9)&=&-(y\prec x)\prec z-y\prec(-x\prec z+z\prec x)+(y\prec z)\prec x;\\
\su_z(v_1+v_5+v_9)&=&-z\prec(-y\prec x+x\prec y)+(z\prec x)\prec y-(z\prec y)\prec x.
\end{eqnarray*}
Replacing the operation $\prec$ by $\cdot$, we get $\su(\lie)=\prelie$.
\end{proof}

A {\bf Poisson algebra} is
defined to be a $\bfk$-vector space with two bilinear operations $\{,\}$ and $\circ$ such that
$\{,\}$ is a Lie bracket and $\circ$ is a product of commutative
associative algebra, and they are compatible in the sense that
$$\{x,y\circ z\}=\{x,y\}\circ z+y\circ\{x,z\}.$$

A {\bf (left) pre-Poisson algebra} of Aguiar~\mcite{Ag2} is
defined as two bilinear operations $\ast$ and $\cdot$ such that
$\ast$ is a product of (left) Zinbiel algebra and $\cdot$ is a
product of (left) pre-Lie algebra and they are compatible in the
sense that
\begin{eqnarray*}
(x\cdot y-y\cdot x)\ast z&=&x\cdot(y\ast z)-y\ast(x\cdot z),\\
(x\ast y+y\ast x)\cdot z&=&x\ast(y\cdot z)+y\ast(x\cdot z).
\end{eqnarray*}
By a similar argument as in Proposition~\mref{pp:zinb}, we obtain
\begin{prop}
The \suc of the operad {\it Poisson} is the operad
{\it PrePoisson}.
\mlabel{pp:prepois}
\end{prop}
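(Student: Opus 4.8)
The plan is to imitate the computations carried out in the proofs of Propositions~\mref{pp:zinb} and~\mref{pp:prelie}: I would fix a presentation of the operad $\mathit{Poisson}$, apply the successor to each of its defining relations, and then recognise the result as a presentation of $\mathit{PrePoisson}$, concluding by comparison of presentations (using the fact, established earlier, that the successor of a binary operad does not depend on the chosen $\bfk[\BS_2]$-basis of the generators nor on the chosen set of representatives of $\calt/\!\sim$).

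First I would record the presentation $\mathit{Poisson}=\mathcal{T}(\gensp)/(\relsp)$ in which $\gensp=\gensp(2)$ is spanned by the Lie bracket $\mu$ (with $\mu^{(12)}=-\mu$) and the commutative associative product $\gop$ (with $\gop^{(12)}=\gop$), and $\relsp$ is generated as an $\BS_3$-module by three locally homogeneous arity-$3$ elements: the Jacobi relator $J$ of $\mu$, the associativity relator $A$ of $\gop$, and the Leibniz relator $L=\{x,y\circ z\}-\{x,y\}\circ z-y\circ\{x,z\}$. Passing to $\su(\mathit{Poisson})=\mathcal{T}(\widetilde{\gensp})/(\su(\relsp))$, the four basis elements $\svec{\mu}{\prec},\svec{\mu}{\succ},\svec{\gop}{\prec},\svec{\gop}{\succ}$ of $\widetilde{\gensp}$ obey $\svec{\mu}{\prec}^{(12)}=-\svec{\mu}{\succ}$ and $\svec{\gop}{\prec}^{(12)}=\svec{\gop}{\succ}$; I would therefore set $\cdot:=\svec{\mu}{\prec}$ and $\ast:=\svec{\gop}{\succ}$, so that $\svec{\mu}{\succ}(x,y)=-(y\cdot x)$ and $\svec{\gop}{\prec}(x,y)=y\ast x$, and these two operations generate $\mathcal{T}(\widetilde{\gensp})$ freely, matching the two operations of a pre-Poisson algebra.

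Next I would compute the successors of the three relators. Since $J$ only involves $\mu$ and $A$ only involves $\gop$, Propositions~\mref{pp:prelie} and~\mref{pp:zinb} apply directly: the $\BS_3$-submodule generated by $\su_x(J),\su_y(J),\su_z(J)$ is the pre-Lie relation for $\cdot$, and the one generated by $\su_x(A),\su_y(A),\su_z(A)$ is the Zinbiel relation for $\ast$. It then remains to treat $L$: using Proposition~\mref{succpath}, for each of the leaves $x,y,z$ I would follow the path from the root through each of the three trees of $L$, replacing each vertex on that path by its $\prec$- or $\succ$-labelled version and each off-path vertex by its $\star$-labelled version, and rewrite the three resulting elements of $\mathcal{T}(\widetilde{\gensp})(3)$ in terms of $\cdot$ and $\ast$. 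The point to check is that the $\BS_3$-submodule generated by $\su_x(L),\su_y(L),\su_z(L)$ then coincides with the one generated by the two compatibility relations of $\mathit{PrePoisson}$; explicitly (after identifying the pre-Lie operation of $\mathit{PrePoisson}$ with $\cdot$, up to replacing it by its opposite) I expect $\su_z(L)$ to give $(x\cdot y-y\cdot x)\ast z=x\cdot(y\ast z)-y\ast(x\cdot z)$, $\su_x(L)$ to give $(x\ast y+y\ast x)\cdot z=x\ast(y\cdot z)+y\ast(x\cdot z)$, and $\su_y(L)$ to lie in the span of the $\BS_3$-orbits of these two. Assembling the three relators, the presentation of $\su(\mathit{Poisson})$ then coincides with that of $\mathit{PrePoisson}$.

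The main obstacle is the bookkeeping in this last computation: one has to track carefully the signs produced by the antisymmetry of $\mu$ when $\svec{\mu}{\succ}$ is expressed through $\cdot$, and one has to pin down the left/right conventions for the pre-Lie and Zinbiel products so that they are consistent both with Propositions~\mref{pp:zinb} and~\mref{pp:prelie} and with Aguiar's definition of $\mathit{PrePoisson}$. Once these conventions are fixed, verifying that the three successors of $L$ are exactly the two pre-Poisson compatibility axioms (and nothing more) is a finite, routine calculation, and the rest of the argument is just assembling the pieces.
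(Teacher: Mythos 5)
Your proposal is correct and is essentially the paper's own argument: the paper proves this proposition simply ``by a similar argument as in Proposition~\mref{pp:zinb}'', i.e.\ by applying the \suc to the defining relations of $\mathit{Poisson}$ (Jacobi, associativity, Leibniz) and recognising the resulting presentation as that of $\mathit{PrePoisson}$, which is exactly what you outline. Your anticipated outcome of the Leibniz computation (with $\cdot=\svec{\mu}{\prec}$, $\ast=\svec{\gop}{\succ}$, and the pre-Lie product taken up to opposite) is indeed what the calculation yields, so the only work left is the routine bookkeeping you already identify.
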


\subsubsection{Examples of \Tsucs}
We similarly have the following examples of \Tsucs of operads.
\begin{exam}
{\rm A {\bf commutative tridendriform algebra}~\mcite{Lo2,LR} is a vector space $A$ equipped with a product $\prec$ and a commutative associative product $\cdot$ satisfying the following equations:
$$(x\prec y)\prec z=x\prec(y\prec z+z\prec y+y\cdot z),$$
$$(x\cdot y)\prec z=x\cdot(y\prec z).$$}
\begin{prop}
The operad {\it ComTriDend} is the \Tsuc of  the operad {\it Comm}.
\end{prop}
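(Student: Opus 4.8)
The plan is to mimic exactly the argument used in Proposition~\ref{pp:zinb}, since $\mathit{ComTriDend}$ stands to $\mathit{Comm}$ via the \Tsuc in the same way that $\zinb$ stood to $\comm$ via the \suc. First I would recall that the operad $\comm$ is presented by a single generating operation $\gop$ in arity $2$, symmetric under the transposition $(12)$, with the space of relations generated as an $\BS_3$-module by $v_1 - v_9 = \gop\circ_{\rmi}\gop - \gop\circ_{\rmii}\gop$, corresponding to associativity $(xy)z = x(yz)$ together with commutativity built into the generator. Then, by definition of the \Tsuc of a binary operad, $\tsu(\comm) = \mathcal{T}(\widehat{\gensp})/(\tsu(R))$, where $\widehat{\gensp} = \gop\ot(\bfk\prec\oplus\bfk\succ\oplus\bfk\,\cdot\,)$ with $\BS_2$-action $\svec{\gop}{\prec}^{(12)} = \svec{\gop^{(12)}}{\succ} = \svec{\gop}{\succ}$ (since $\gop^{(12)}=\gop$) and $\svec{\gop}{\cdot}^{(12)} = \svec{\gop}{\cdot}$. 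So if I set $\prec := \svec{\gop}{\prec}$, $\succ := \svec{\gop}{\succ}$ and $\cdot := \svec{\gop}{\cdot}$, the commutativity of $\gop$ forces $\prec^{(12)} = \succ$ and $\cdot^{(12)} = \cdot$: the new product $\prec$ becomes the "opposite" of $\succ$, while $\cdot$ is genuinely commutative. This already matches the data of a commutative tridendriform algebra, in which one retains one noncommutative product $\prec$ (whose opposite is implicitly $\succ$) and one commutative associative product $\cdot$.

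Next I would compute $\tsu_J(v_1 - v_9)$ for each nonempty subset $J \subseteq \{x,y,z\}$ of the three leaves, using Proposition~\ref{tsuccpath} to read off the relabeling of the two vertices of each of the trees $\gop\circ_{\rmi}\gop$ (shape $(xy)z$) and $\gop\circ_{\rmii}\gop$ (shape $x(yz)$), and also (for completeness, or to check no extra relations appear) the remaining trees obtained from $v_1$ and $v_9$ by the $\BS_3$-action that one needs since $R$ is an $\BS$-submodule. For the singleton subsets one expects relations of "one-sided" type and for the full set $J=\{x,y,z\}$ the "all-dot" relation, which should reproduce respectively the axiom $(x\prec y)\prec z = x\prec(y\prec z + z\prec y + y\cdot z)$ and the compatibility $(x\cdot y)\prec z = x\cdot(y\prec z)$, together with the analogues obtained from these under relabeling of variables. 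Concretely, I anticipate something like $\tsu_z(v_1-v_9) = (x\star y)\prec z - x\prec(y\prec z)$ where $\star = \prec+\succ+\cdot$ and, after using $\succ = \prec^{(12)}$, this rewrites as the first ComTriDend axiom; while $\tsu_{\{x,z\}}$ or $\tsu_{\{y,z\}}$-type subsets give the $(x\cdot y)\prec z = x\cdot(y\prec z)$ compatibility. Then, just as in Proposition~\ref{pp:zinb}, I would note that "replacing the operation $\succ$ by the opposite of $\prec$" (equivalently, keeping only $\prec$ and $\cdot$ as the fundamental operations, as in the stated definition of $\mathit{ComTriDend}$) shows that the ideal $(\tsu(R))$ is precisely the ideal generated by the tridendriform-type axioms, so $\tsu(\comm) = \mathit{ComTriDend}$.

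The main obstacle, as usual with these operad-identification arguments, is bookkeeping rather than conceptual: one must be careful that the space of relations $\tsu(R)$ is generated as an $\BS$-module by the $\tsu_J$ of \emph{all} the trees appearing in the $\BS_3$-orbit of $v_1 - v_9$, not merely by the three subsets of $\{x,y,z\}$ applied to the single element $v_1-v_9$; and one must verify that after imposing commutativity of $\cdot$ and the identification $\succ \leftrightarrow \prec^{(12)}$, no spurious extra relations survive and no needed relation is missing. In particular I would check that the relation coming from $J = \{x,y\}$ (the dot at the root, both paths going left then splitting) does not impose anything beyond what the other relations already give after symmetrization, which is why the published presentation of $\mathit{ComTriDend}$ has only two axioms. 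Because this is entirely parallel to Proposition~\ref{pp:zinb} (whose proof was just a three-line computation), I would present the proof simply as: \emph{"By the same argument as in Proposition~\ref{pp:zinb}, replacing $\su$ by $\tsu$ and using $v_1 - v_9$ as the single $\BS_3$-module generator of the relations of $\comm$; the computation of $\tsu_J(v_1-v_9)$ over the nonempty subsets $J \subseteq \{x,y,z\}$ yields exactly the defining relations of $\mathit{ComTriDend}$ after setting $\prec = \svec{\gop}{\prec}$, $\cdot = \svec{\gop}{\cdot}$ and using $\prec^{(12)} = \succ$."}
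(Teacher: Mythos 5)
Your plan is exactly the argument the paper intends: the paper states this proposition without proof, as the analogue of Proposition~\ref{pp:zinb} and of the explicit computation showing $\tsu(\lie)=\postlie$, namely computing $\tsu_J(v_1-v_9)$ for all seven nonempty $J\subseteq\{x,y,z\}$ via Proposition~\ref{tsuccpath} and rewriting with $\prec^{(12)}=\succ$, $\cdot^{(12)}=\cdot$. Only your anticipated formulas are slightly off in the bookkeeping (for instance $\tsu_{\{z\}}(v_1-v_9)=(x\star y)\succ z-(y\succ z)\prec x$, and it is $J=\{x,y\}$ that yields $(x\cdot y)\prec z=x\cdot(y\prec z)$ most directly, the other doubletons giving permuted variants and $J=\{x,y,z\}$ giving associativity of $\cdot$), but the computation you propose does produce exactly the ComTriDend relations, so your approach and conclusion agree with the paper's.
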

\end{exam}

A {\bf PostLie algebra}~\mcite{Va1} is a vector space $A$ with a product $\circ$
and a skew-symmetric operation $[,]$ satisfying the relations:
\begin{eqnarray}
&[[x,y],z]+[[z,x],y]+[[y,z],x]=0,&\notag \\
&(x\circ y)\circ z-x\circ(y\circ z)-(x\circ z)\circ y+x\circ(z\circ y)-x\circ[y,z]=0, \notag 
\\
&[x,y]\circ z-[x\circ z,y]-[x,y\circ z]=0.& \notag
\end{eqnarray}


It is easy to see that if the operation $[,]$ happens to be trivial, then $(A,\circ)$ becomes a pre-Lie algebra.

\begin{prop}
The operad $\postlie$ is the
\Tsuc of the operad $\lie$.
\end{prop}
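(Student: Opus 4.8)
The plan is to compute the \Tsuc of $\lie$ explicitly using the generators-and-relations description and to match the result against the three defining relations of a PostLie algebra. First I would set up notation exactly as in the proof of Proposition~\ref{pp:prelie}: let $\mu$ be the (skew-symmetric) generating operation of $\lie$, and write $\prec:=\svec{\mu}{\prec}$, $\succ:=\svec{\mu}{\succ}$, $\cdot:=\svec{\mu}{\cdot}$. From the $\BS_2$-action on $\widehat{\gensp}$ we get $\prec^{(12)}=-\succ$ (because $\mu^{(12)}=-\mu$) and, crucially, $\cdot^{(12)}=-\cdot$, so $\cdot$ is skew-symmetric while $\prec,\succ$ are a pair swapped by the transposition; this already explains why the bracket $[,]$ of a PostLie algebra is skew-symmetric and why $\circ$ (to be identified with $\prec$, or $\succ$) is a genuinely new non-symmetric operation.

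The core step is to apply $\tsu_J$ to the single $\BS_3$-generator $v_1+v_5+v_9 = \mu\circ_{\rmi}\mu+\mu\circ_{\rmii}\mu+\mu\circ_{\rmiii}\mu$ of the space of relations of $\lie$, for every nonempty $J\subseteq\{1,2,3\}$ (equivalently, by Lemma~\ref{lem:sacttsu}, for $J$ up to the $\BS_3$-action, but it is cleanest just to list all seven). Using Proposition~\ref{tsuccpath}, each vertex on a path to a leaf of $J$ gets relabelled by $\prec$, $\succ$ or $\cdot$ according to whether all such paths turn left, all turn right, or the paths split; a vertex off all these paths gets $\svec{\mu}{\star}=\prec+\succ+\cdot$. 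Carrying this out: the three singletons $J=\{x\},\{y\},\{z\}$ reproduce (after relabelling $\prec\to\circ$ or $\succ\to\circ$, and $\cdot\to[,]$, up to the skew-symmetry identifications) precisely the middle PostLie relation, the ``mixed'' pre-Lie-type identity $(x\circ y)\circ z-x\circ(y\circ z)-(x\circ z)\circ y+x\circ(z\circ y)-x\circ[y,z]=0$, in its three leaf-variants; and the three two-element subsets $J=\{x,y\},\{y,z\},\{x,z\}$ produce the compatibility relation $[x,y]\circ z-[x\circ z,y]-[x,y\circ z]=0$ in its three variants. The full set $J=\{x,y,z\}$ gives back the Jacobi identity $[[x,y],z]+[[z,x],y]+[[y,z],x]=0$ for $[,]$, since when all three leaves are selected every vertex lies on split paths and is relabelled by $\cdot$. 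Then I would invoke the basis-independence (Proposition~\ref{indgenbas} and its $\BS$-graded analogue) to see that the relabelling $\prec\leftrightarrow\circ$, $\cdot\leftrightarrow[,]$ is a legitimate change of generating basis for $\widehat{\gensp}$, so that $\tsu(\lie)$ and $\postlie$ have literally the same presentation.

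A few bookkeeping points deserve care rather than being swept aside. One must track signs consistently: because $\mu$ is skew and $\cdot^{(12)}=-\cdot$, some of the seven relations will initially appear with a reversed argument or an overall sign, and one should record the substitutions (e.g.\ $x\prec y\rightsquigarrow x\circ y$ and $\svec{\mu}{\cdot}(x,y)\rightsquigarrow [x,y]$, so that $\svec{\mu}{\cdot}(y,x)\rightsquigarrow -[x,y]$) and verify they are globally coherent. One should also note that $\tsu_J$ commutes with the $\BS_3$-action (Lemma~\ref{lem:sacttsu}), so the three leaf-variants of each PostLie relation genuinely span the same $\BS_3$-submodule as what one gets from a single orbit representative, and hence the counting ``$1$ relation of $\lie$ $\mapsto$ $3+3+1$ relations of $\postlie$, organized into $\BS_3$-orbits'' is exactly right.

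The main obstacle is the third, purely computational step: faithfully expanding $\tsu_J(v_1+v_5+v_9)$ for each of the seven $J$ in the tree-monomial notation and then recognising the output after the skew-symmetrization substitutions. There is no conceptual difficulty once Propositions~\ref{tsuccpath} and~\ref{indgenbas} and Lemma~\ref{lem:sacttsu} are in hand — the content is the same ``splitting'' pattern already seen for $\su(\lie)=\prelie$ in Proposition~\ref{pp:prelie}, with the extra ``$\cdot$'' label accounting for the surviving bracket — but care with signs and with the identification of the skew-symmetric operation $\cdot$ with $[,]$ is where an error would most easily creep in. Accordingly I would present the computation for one singleton $J$, one doubleton $J$, and the full $J=\{x,y,z\}$ in detail, and assert the remaining four cases by the $\BS_3$-symmetry of Lemma~\ref{lem:sacttsu}.
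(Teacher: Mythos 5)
Your proposal is correct and follows essentially the same route as the paper: compute $\tsu_J(v_1+v_5+v_9)$ for the nonempty subsets $J$ of the leaves, observe that singletons give the pre-Lie-type PostLie relation, doubletons give the compatibility relation, and the full set gives the Jacobi identity, then relabel $\prec$ as $\circ$ and $\cdot$ as $[\,,\,]$. The only differences are cosmetic: the paper writes out all seven cases explicitly rather than appealing to Lemma~\mref{lem:sacttsu} for the symmetric variants, which is a harmless economy on your part.
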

\begin{proof}
Let $\mu$ be the generating operation of the operad $\lie$. Set $\prec:=\svec{\mu}{\prec}$,
$\succ:=\svec{\mu}{\succ}$ and $\cdot:=\svec{\mu}{\cdot}$. Since $\svec{\mu}{\prec}^{(12)}=\svec{\mu^{(12)}}{\succ}=-\svec{\mu}{\succ}$
and $\svec{\mu}{\cdot}^{(12)}=\svec{\mu^{(12)}}{\cdot}=-\svec{\mu}{\cdot}$, we have
$\prec^{(12)}=-\succ$ and $\cdot^{(12)}=-\;\cdot$. The space of relations of $\lie$ is generated as an $\BS_{3}$-module by $$v_1+v_5+v_9=\mu\circ_{{\rm I}}\mu+\mu\circ_{{\rm II}}\mu+\mu\circ_{{\rm III}}\mu \ .$$
Then we have
\begin{eqnarray*}
\tsu_{\{x\}}(v_1+v_5+v_9)&=&(x\prec y)\prec z-(x\prec z)\prec y-x\prec(y\prec z-z\prec y+y\cdot z);\\
\tsu_{\{y\}}(v_1+v_5+v_9)&=&-(y\prec x)\prec z-y\prec(-x\prec z+z\prec x+z\cdot x)+(y\prec z)\prec x;\\
\tsu_{\{z\}}(v_1+v_5+v_9)&=&-z\prec(-y\prec x+x\prec y+x\cdot y)+(z\prec x)\prec y-(z\prec y)\prec x;\\
\tsu_{\{x,y\}}(v_1+v_5+v_9)&=&(x\cdot y)\prec z-(x\prec z)\cdot y-x\cdot(y\prec z);\\
\tsu_{\{y,z\}}(v_1+v_5+v_9)&=&-(y\prec x)\cdot z-y\cdot(z\prec x)-(y\cdot z)\prec x;\\
\tsu_{\{x,z\}}(v_1+v_5+v_9)&=&-z\cdot(x\prec y)+(z\cdot x)\prec y-(z\prec y)\cdot x;\\
\tsu_{\{x,y,z\}}(v_1+v_5+v_9)&=&(x\cdot y)\cdot z+(z\cdot x)\cdot y+(y\cdot z)\cdot x.
\end{eqnarray*}
Replacing the operations $\prec$ by $\circ$ and $\cdot$ by $[,]$, we get $\tsu(\lie)=\postlie$.
\end{proof}

\subsection{Properties}
We study the relationship among a binary operad, its \suc and its \Tsuc.

\subsubsection{Operads and their successors}

\begin{lemma}
Let $\gensp$ be an $\BS$-module concentrated in arity $2$ with a linear basis $\genbas$. For a labeled planar binary $n$-tree $\tau\in \calt(\genbas)$, the following equations hold in $\mathcal{T}(\gensp)$:
\begin{equation}
\sum_{x\in \lin(\tau)}\su_{x}(\tau)=\widetilde{\tau}, \mlabel{eq:u1}
\end{equation}
\begin{equation}
\sum_{J\subseteq \lin(\tau)}\TSu_{J}(\tau)=\hat{\tau}.\mlabel{eq:tu1}
\end{equation}
\mlabel{le:su}
\end{lemma}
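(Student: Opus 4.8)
The plan is to prove both identities \eqref{eq:u1} and \eqref{eq:tu1} simultaneously by induction on $n = |\lin(\tau)|$, using the recursive definitions of $\su_x$ and $\tsu_J$ together with the description of $\widetilde{\tau}$ and $\widehat{\tau}$ as obtained by replacing every vertex label $\gop$ by $\svec{\gop}{\star}$ (which equals $\svec{\gop}{\prec}+\svec{\gop}{\succ}$ in the bisuccessor case and $\svec{\gop}{\prec}+\svec{\gop}{\succ}+\svec{\gop}{\cdot}$ in the trisuccessor case). The base case $n=1$ is the trivial tree, where both sides are the trivial tree by definition, and the case $n=2$ is immediate since the single vertex gets relabeled by $\svec{\gop}{\star}$ on both sides.

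For the inductive step on \eqref{eq:u1}, write $\tau = \tau_\ell \vee_\gop \tau_r$ with $|\lin(\tau_\ell)|, |\lin(\tau_r)| < n$. Split the sum over $x \in \lin(\tau)$ according to whether $x \in \lin(\tau_\ell)$ or $x \in \lin(\tau_r)$. By the recursive formula in Definition~\ref{de:vtilo},
\[
\sum_{x\in \lin(\tau)}\su_x(\tau) = \sum_{x\in \lin(\tau_\ell)} \su_x(\tau_\ell)\vee_{\ssvec{\gop}{\prec}} \widetilde{\tau}_r \ + \ \sum_{x\in \lin(\tau_r)} \widetilde{\tau}_\ell \vee_{\ssvec{\gop}{\succ}} \su_x(\tau_r).
\]
Since grafting is bilinear in each of its tree arguments, the first sum equals $\bigl(\sum_{x\in\lin(\tau_\ell)}\su_x(\tau_\ell)\bigr)\vee_{\ssvec{\gop}{\prec}}\widetilde{\tau}_r$, which by the induction hypothesis applied to $\tau_\ell$ equals $\widetilde{\tau}_\ell \vee_{\ssvec{\gop}{\prec}}\widetilde{\tau}_r$; similarly the second sum equals $\widetilde{\tau}_\ell \vee_{\ssvec{\gop}{\succ}}\widetilde{\tau}_r$. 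Adding, and using bilinearity of grafting once more to pull the sum back inside, gives $\widetilde{\tau}_\ell \vee_{\ssvec{\gop}{\prec}+\ssvec{\gop}{\succ}}\widetilde{\tau}_r = \widetilde{\tau}_\ell\vee_{\ssvec{\gop}{\star}}\widetilde{\tau}_r$, which is exactly $\widetilde{\tau}$ by the definition of $\widetilde{\tau}$ on a grafting.

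For \eqref{eq:tu1} the argument is parallel but the index set $\{J : \emptyset \neq J \subseteq \lin(\tau)\}$ now splits into three families: those $J$ entirely inside $\lin(\tau_\ell)$, those entirely inside $\lin(\tau_r)$, and those meeting both halves (for which $J = J_\ell \sqcup J_r$ with $J_\ell = J\cap\lin(\tau_\ell)$ and $J_r = J\cap\lin(\tau_r)$ both nonempty, ranging independently). Applying the three-case recursion for $\tsu_J$ and bilinearity of grafting, the first family contributes $\widehat{\tau}_\ell \vee_{\ssvec{\gop}{\prec}} \widehat{\tau}_r$ by induction, the second contributes $\widehat{\tau}_\ell\vee_{\ssvec{\gop}{\succ}}\widehat{\tau}_r$, and the third contributes $\bigl(\sum_{\emptyset\neq J_\ell}\tsu_{J_\ell}(\tau_\ell)\bigr)\vee_{\ssvec{\gop}{\cdot}}\bigl(\sum_{\emptyset\neq J_r}\tsu_{J_r}(\tau_r)\bigr) = \widehat{\tau}_\ell\vee_{\ssvec{\gop}{\cdot}}\widehat{\tau}_r$. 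Summing the three and using bilinearity to combine the grafting labels yields $\widehat{\tau}_\ell\vee_{\ssvec{\gop}{\prec}+\ssvec{\gop}{\succ}+\ssvec{\gop}{\cdot}}\widehat{\tau}_r = \widehat{\tau}_\ell\vee_{\ssvec{\gop}{\star}}\widehat{\tau}_r = \widehat{\tau}$. The only point demanding genuine care is the bookkeeping of the index sets in the trisuccessor case — in particular that the ``otherwise'' subsets $J$ are in bijection with pairs $(J_\ell, J_r)$ of nonempty subsets of the two leaf sets — but this is a routine combinatorial decomposition rather than a real obstacle; the rest is just linearity of grafting and the inductive hypothesis. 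Since both sides of each identity are defined by linear extension from $\calt(\genbas)$ to $\mathcal{T}(\gensp)$, and $\genbas$ is a basis, it suffices to verify the identities on $\tau \in \calt(\genbas)$ as done above.
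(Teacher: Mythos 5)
Your proof is correct and follows essentially the same route as the paper: induction on the number of leaves, decomposing $\tau=\tau_\ell\vee_\gop\tau_r$, splitting the sum according to which side the leaf (or subset) lies in, and using linearity of grafting together with the inductive hypothesis to collect $\svec{\gop}{\prec}+\svec{\gop}{\succ}$ (resp.\ $+\svec{\gop}{\cdot}$) into $\svec{\gop}{\star}$. The only difference is that you spell out the trisuccessor case (including the bijection between the ``mixed'' subsets $J$ and pairs of nonempty subsets of the two leaf sets), which the paper dispatches with ``the proof of Eq.~(\mref{eq:tu1}) is similar''; that bookkeeping is done correctly.
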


\begin{proof}
We prove Eq.~(\mref{eq:u1}) by induction on
$|\lin(\tau)|$. When $|\lin(\tau)|=1$,  we have
$$\sum_{x\in \lin(\tau)}\su_{x}(\tau)=\tau=\widetilde{\tau}.$$
Now assume that
Eq.~(\mref{eq:u1}) holds for all $\tau\in\calt(\genbas)$ with $\lin(\tau)\leq k$ for a $k\geq 1$ and consider a $(k+1)$-tree $\tau$ in $\calt(\genbas)$. Since $\tau=\tau_\ell\vee_{\gop}\tau_r$ for some $\ell,r\leq k$ and $\gop\in \gensp$, by the
definition of the \suc of a planar binary tree and the induction hypothesis, we have
\begin{eqnarray*}
\sum_{x\in \lin(\tau)}\su_{x}(\tau)&=&\sum_{x\in
\lin(\tau_\ell)}\su_{x}(\tau_\ell) \vee_{\ssvec{\gop}{\prec}} \widetilde{\tau}_{r}+
\widetilde{\tau}_{\ell} \vee_{\svec{\gop}{\succ}} \sum_{x\in
\lin(\tau_r)}\su_{x}(\tau_r)\\
&=&\widetilde{\tau}_{\ell} \vee_{\svec{\gop}{\prec}} \widetilde{\tau}_{r}+
\widetilde{\tau}_{\ell} \vee_{\svec{\gop}{\succ}} \widetilde{\tau}_{r}\\
&=& \widetilde{\tau}_{\ell} \vee_{\svec{\gop}{\ast}} \widetilde{\tau}_{r}\\
&=& \widetilde{\tau}.
\end{eqnarray*}
This completes the induction. The proof of Eq.~(\mref{eq:tu1}) is similar.
\end{proof}

\begin{prop}\mlabel{pp:suast}
Let $\calp=\mathcal{T}(V)/(R)$ be a binary operad.
\begin{enumerate}
\item\mlabel{it:suast} There is a morphism of operads from $\calp$ to $\su(\calp)$ which extends the linear map from $\gensp$ to $\widetilde{\gensp}$ defined by
 \begin{equation}
 \gop \mapsto \svec{\gop}{\star} \ , \quad \gop\in \gensp \ .\mlabel{eq:repsp}
 \end{equation}
\item\mlabel{it:tsustar} There is a morphism of operads from $\calp$ to $\tsu(\calp)$ which extends the linear map from $\gensp$ to $\widehat{\gensp}$ defined by
\begin{equation}
\gop \mapsto \svec{\gop}{\star} \ , \quad \gop\in \gensp \ .
\end{equation}
\item\mlabel{it:tsucdot} There is a morphism of operads from $\calp$ to $\tsu(\calp)$ which extends the linear map from $\gensp$ to $\widehat{\gensp}$ defined by
 \begin{equation}
\gop \mapsto \svec{\gop}{\cdot}, \quad \gop\in \gensp. \mlabel{eq:repspb}
 \end{equation}
\end{enumerate}
\end{prop}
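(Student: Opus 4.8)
The plan is to treat all three items uniformly: in each case we must verify that the prescribed linear map on generators sends the space of relations $R$ into the space of relations of $\su(\calp)$ (resp. $\tsu(\calp)$), so that it descends to a morphism of operads. Since $R$ is spanned, as an $\BS$-module, by the locally homogeneous elements $r_s = \sum_i c_{s,i}\tau_{s,i}$, and since a morphism of free operads is determined by its value on generators, it suffices to check that the image of each $r_s$ lies in the span (as an $\BS$-module) of $\su(R)$ (resp. $\tsu(R)$).

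For item~\eqref{it:suast}, let $\phi\colon \mathcal{T}(V)\to\mathcal{T}(\widetilde V)$ be the morphism of free operads extending $\gop\mapsto\svec{\gop}{\star}$. The key observation is that applying $\phi$ to a labeled tree $\tau$ replaces every vertex label $\gop$ by $\svec{\gop}{\star}$, which is precisely $\widetilde{\tau}$ by definition. Hence $\phi(r_s)=\sum_i c_{s,i}\widetilde{\tau}_{s,i}$, and by Eq.~\eqref{eq:u1} of Lemma~\ref{le:su} we have $\widetilde{\tau}_{s,i}=\sum_{x\in\lin(\tau_{s,i})}\su_x(\tau_{s,i})$. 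Since $\lin(\tau_{s,i})$ is the same set, say $L_s$, for all $i$ with $s$ fixed, we can interchange the sums to get
\begin{equation}
\phi(r_s)=\sum_{x\in L_s}\sum_i c_{s,i}\su_x(\tau_{s,i})=\sum_{x\in L_s}\su_x(r_s),
\notag
\end{equation}
which is a sum of elements of $\su(R)$. Thus $\phi$ descends to the quotient, giving the desired morphism $\calp\to\su(\calp)$. Item~\eqref{it:tsustar} is proved in exactly the same way, using Eq.~\eqref{eq:tu1} of Lemma~\ref{le:su} in place of Eq.~\eqref{eq:u1}: the morphism $\gop\mapsto\svec{\gop}{\star}$ sends $\tau$ to $\hat\tau=\sum_{\emptyset\neq J\subseteq\lin(\tau)}\TSu_J(\tau)$ (the term $J=\emptyset$ contributes nothing once we restrict, or rather we note $\hat\tau$ already equals the sum over all nonempty $J$ in the relevant arity since the empty subset is excluded in the definition of $\tsu$), and summing over $J$ expresses $\phi(r_s)$ as a combination of elements of $\tsu(R)$.

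For item~\eqref{it:tsucdot}, let $\psi\colon\mathcal{T}(V)\to\mathcal{T}(\widehat V)$ extend $\gop\mapsto\svec{\gop}{\cdot}$. Here the relevant fact is that $\psi(\tau)$ equals $\tsu_{\lin(\tau)}(\tau)$: by Proposition~\ref{tsuccpath}, when $J$ is the full leaf set $\lin(\tau)$, every vertex lies on some root-to-leaf path in $J$ and, at every vertex, some such path turns left while some turns right (because both subtrees contain leaves of $J$), so every label $\gop$ is replaced by $\svec{\gop}{\cdot}$. Consequently $\psi(r_s)=\sum_i c_{s,i}\tsu_{L_s}(\tau_{s,i})=\tsu_{L_s}(r_s)$, which is again an element of $\tsu(R)$, so $\psi$ descends to a morphism $\calp\to\tsu(\calp)$.

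The only genuinely delicate point is the compatibility with the $\BS$-module structure: one must check that the span of $\su(R)$ (resp. $\tsu(R)$) as an $\BS$-module is preserved, i.e.\ that $\phi$ and $\psi$ are morphisms of $\BS$-modules. This follows from the defining $\BS_2$-action on $\widetilde V$ and $\widehat V$ together with Lemma~\ref{lem:sactsu} (resp. Lemma~\ref{lem:sacttsu}): since $\svec{\gop}{\star}^{(12)}=\svec{\gop^{(12)}}{\star}$ and $\svec{\gop}{\cdot}^{(12)}=\svec{\gop^{(12)}}{\cdot}$, the maps on generators are $\BS_2$-equivariant, hence extend to $\BS$-equivariant morphisms of free operads, and the computations above respect this structure because the successor operations commute with permutations of leaves. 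I expect this verification to be essentially routine given the lemmas already established; the conceptual content is entirely contained in Lemma~\ref{le:su} and Proposition~\ref{tsuccpath}.
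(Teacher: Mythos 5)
Your proposal is correct and follows essentially the same route as the paper: items (a) and (b) via $\BS_2$-equivariance of $\gop\mapsto\svec{\gop}{\star}$, Lemma~\mref{le:su}, and the interchange of sums using that $\lin(\tau_{s,i})$ is independent of $i$; item (c) via the observation that $\TSu_{\lin(\tau)}(\tau)$ relabels every vertex by $\svec{\gop}{\cdot}$. No gaps to report.
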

\begin{proof}
We assume that $R$ is given by Eq.~(\mref{eq:pres'}).\\
(\mref{it:suast}) It is easy to see that the linear map defined in Eq.~(\mref{eq:repsp}) is $\BS_2$-equivariant so it induces a morphism of operads from $\mathcal{T}(\gensp)$ to $\su(\calp)$. Moreover, by Lemma~\mref{le:su}, Eq.~(\mref{eq:u1}) holds. Hence we have
$$\sum_ic_{s,i}\widetilde{\tau}_{s,i}=
\sum_{i}\sum_{x\in \lin(\tau_{s,i})}c_{s,i}\su_{x}(\tau_{s,i}), 1\leq s\leq k.$$
Since $L_s:=\lin(\tau_{s,i})$ does not depend on $i$, we have
$$\sum_ic_{s,i}\widetilde{\tau}_{s,i}=\sum_{x\in L_s} \su_{x}\left(\sum_{i}c_{s,i}\tau_{s,i}\right)=0,\quad
1\leq s\leq k.$$
This completes the proof.

\noindent (\mref{it:tsustar}) The proof is similar to the proof of Item~(\mref{it:suast}).

\noindent (\mref{it:tsucdot})
It is easy to see that the linear map defined in Eq.~(\mref{eq:repspb}) is $\BS_2$-equivariant so it induces a morphism of operads from $\mathcal{T}(\gensp)$ to $\tsu(\calp)$. Moreover, by the definition of a \Tsuc the fol\-lo\-wing
equations hold:
$$\sum_ic_{s,i}\TSu_{\lin(\tau_{s,i})}(\tau_{s,i})=0,\quad 1\leq s\leq
k.$$ Note that the labeled tree $\TSu_{\lin(\tau_{s,i})}(\tau_{s,i})$ is obtained by replacing
the label of each vertex of $\tau_{s,i}$, say $\gop$, by $\svec{\gop}{\cdot}$. Hence the conclusion holds.
\end{proof}

If we take $\calp$ to be the operad of associative algebras then we obtain the following results of Loday~\mcite{Lo} and Loday and Ronco~\mcite{LR}:
\begin{coro}
\begin{enumerate}
\item
Let $(A,\prec,\succ)$ be a dendriform dialgebra. Then the operation $\ast:=\prec+\succ$ makes $A$ into an associative algebra.
\item
Let $(A,\prec,\succ,\cdot)$ be a dendriform trialgebra. Then the operation $\star:=\ \prec+\succ+\ \cdot$ makes $A$ into an associative algebra.
\item
Let $(A,\prec,\succ,\cdot)$ be a dendriform trialgebra. Then $(A,\cdot)$ carries an associative algebra structure.
\end{enumerate}
\end{coro}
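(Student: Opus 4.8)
The plan is to read off all three statements from Proposition~\mref{pp:suast} applied to $\calp=\ass$, the operad of associative algebras, together with the identifications $\su(\ass)=\dend$ and $\tsu(\ass)=\tridend$ recorded in Example~\mref{ex:dialg} and in the example immediately following it. First I would recall the standard correspondence between operads and their algebras: a morphism of operads $f\colon\calp\to\calq$ turns every $\calq$-algebra into a $\calp$-algebra by restriction of structure, and when $\calp$ is generated by a single binary operation $\mu$, the induced $\calp$-product on a $\calq$-algebra $A$ is simply the element $f(\mu)\in\calq(2)$ acting on $A$ (see~\cite{LV}). I then take $\calp=\ass$ with its single generator $\mu$, and write $\prec:=\svec{\mu}{\prec}$, $\succ:=\svec{\mu}{\succ}$, and (in the trisuccessor case) $\cdot:=\svec{\mu}{\cdot}$ for the generators of $\su(\ass)=\dend$, respectively $\tsu(\ass)=\tridend$, so that a dendriform dialgebra (resp. trialgebra) is precisely an algebra over $\dend$ (resp. $\tridend$).

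With this dictionary in place the three parts become immediate. For~(a), Proposition~\mref{pp:suast}(a) supplies a morphism of operads $\ass\to\su(\ass)=\dend$ extending $\mu\mapsto\svec{\mu}{\star}=\prec+\succ$; restricting along it shows that $\ast=\prec+\succ$ is an associative product on any dendriform dialgebra $(A,\prec,\succ)$. For~(b), Proposition~\mref{pp:suast}(b) supplies a morphism $\ass\to\tsu(\ass)=\tridend$ extending $\mu\mapsto\svec{\mu}{\star}=\prec+\succ+\cdot$, so $\star=\prec+\succ+\cdot$ is associative on any dendriform trialgebra. For~(c), Proposition~\mref{pp:suast}(c) supplies a morphism $\ass\to\tsu(\ass)=\tridend$ extending $\mu\mapsto\svec{\mu}{\cdot}=\cdot$, so $(A,\cdot)$ is associative. (Proposition~\mref{pp:suast} is stated for symmetric binary operads; the identical statement and proof hold for binary nonsymmetric operads, the setting of Example~\mref{ex:dialg}, and alternatively one may pass through $\mathit{Reg}$ using Proposition~\mref{succregop}.)

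The one point that genuinely needs to be verified---and hence the main thing to check---is that the presentations of $\dend$ and $\tridend$ produced by our constructions $\su(\ass)$ and $\tsu(\ass)$ coincide with the original defining relations of Loday and of Loday--Ronco. Concretely, one computes $\su_x(r)$, $\su_y(r)$, $\su_z(r)$ for the associativity relation $r\oto(xy)z-x(yz)$ and checks that the three results are exactly the three dendriform axioms after the substitution $\svec{\mu}{\prec}\oto\prec$, $\svec{\mu}{\succ}\oto\succ$; likewise one computes the seven elements $\tsu_J(r)$ for the nonempty subsets $J\subseteq\{x,y,z\}$ and matches them with the seven tridendriform axioms. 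This is the same routine bookkeeping already carried out in Propositions~\mref{pp:zinb} and~\mref{pp:prelie}, and once it is recorded the corollary follows with no further argument.
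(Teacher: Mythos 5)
Your proposal is correct and follows essentially the same route as the paper: the corollary is stated there as an immediate consequence of Proposition~\mref{pp:suast} applied to $\calp=\ass$, together with the identifications $\su(\ass)=\dend$ and $\tsu(\ass)=\tridend$ from Example~\mref{ex:dialg} and the example following it. Your extra remarks (restriction of structure along an operad morphism, the nonsymmetric versus symmetric framework via $\mathit{Reg}$, and the bookkeeping behind the identifications) only make explicit what the paper leaves implicit.
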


\subsubsection{Relationship between the \suc and \Tsuc of a binary operad}

\begin{lemma}\mlabel{le:tsesx}
Let $\tau$ be a labeled $n$-tree in $\calt(\genbas)$. If the operations $\left \{ \svec{\gop}{\cdot} \ \Big| \gop \in \gensp \right \}$ are trivial, then for any $x\in\lin(\tau)$, we have
\begin{equation}
\TSu_{\{x\}}(\tau)=\su_x(\tau) \ \text{in} \ \mathcal{T}(\widehat{V}) \ .
\notag 
\end{equation}
\end{lemma}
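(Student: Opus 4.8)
The plan is to compare the two inductive definitions directly, using Proposition~\ref{tsuccpath} and Proposition~\ref{succpath} as the bridge. By Proposition~\ref{tsuccpath}, the labeled tree $\TSu_{\{x\}}(\tau)$ is obtained from $\tau$ by relabeling every vertex $v$: if $v$ lies on the path from the root to the leaf $x$, its label $\gop$ becomes $\svec{\gop}{\prec}$ when the path turns left at $v$ and $\svec{\gop}{\succ}$ when it turns right (the case $\svec{\gop}{\cdot}$ cannot occur, since there is only one path, so it never turns both left and right at a vertex); if $v$ is not on that path, its label becomes $\svec{\gop}{\star}=\svec{\gop}{\prec}+\svec{\gop}{\succ}+\svec{\gop}{\cdot}$. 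On the other hand, by Proposition~\ref{succpath}, $\su_x(\tau)$ is obtained from $\tau$ by the same relabeling on the path (turn left $\mapsto \svec{\gop}{\prec}$, turn right $\mapsto \svec{\gop}{\succ}$), while off-path vertices get the label $\svec{\gop}{\star}=\svec{\gop}{\prec}+\svec{\gop}{\succ}$ in $\widetilde{V}$.

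The two expressions therefore differ only in the off-path vertices: in $\TSu_{\{x\}}(\tau)$ each such vertex carries $\svec{\gop}{\prec}+\svec{\gop}{\succ}+\svec{\gop}{\cdot}$, whereas in $\su_x(\tau)$ it carries $\svec{\gop}{\prec}+\svec{\gop}{\succ}$. By hypothesis the operations $\svec{\gop}{\cdot}$ are trivial in $\mathcal{T}(\widehat{V})$ — more precisely, we pass to the quotient in which these generators are set to zero — so $\svec{\gop}{\cdot}$ contributes nothing, and the off-path labels agree after this identification. Hence $\TSu_{\{x\}}(\tau)$ and $\su_x(\tau)$ become equal, as claimed. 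Since the definitions and both path descriptions are multilinear, it suffices to check the statement on the basis $\calt(\genbas)$, which is exactly what the path descriptions handle.

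Alternatively, one can give a clean self-contained induction on $n=|\lin(\tau)|$ that avoids invoking the path propositions. For $n=1$ both sides equal the trivial tree. For the inductive step, write $\tau=\tau_\ell\vee_\gop\tau_r$ with $x\in\lin(\tau_\ell)$ (the case $x\in\lin(\tau_r)$ is symmetric). Then $J=\{x\}\subseteq\lin(\tau_\ell)$, so by definition $\TSu_{\{x\}}(\tau)=\TSu_{\{x\}}(\tau_\ell)\vee_{\ssvec{\gop}{\prec}}\widehat{\tau}_r$ and $\su_x(\tau)=\su_x(\tau_\ell)\vee_{\ssvec{\gop}{\prec}}\widetilde{\tau}_r$. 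By the induction hypothesis $\TSu_{\{x\}}(\tau_\ell)=\su_x(\tau_\ell)$, so it remains to see that $\widehat{\tau}_r=\widetilde{\tau}_r$ once the $\svec{\gop}{\cdot}$ are trivial: indeed $\widehat{\tau}_r$ replaces each label $\gop$ by $\svec{\gop}{\prec}+\svec{\gop}{\succ}+\svec{\gop}{\cdot}$ and $\widetilde{\tau}_r$ replaces it by $\svec{\gop}{\prec}+\svec{\gop}{\succ}$, and these coincide modulo the trivial operations, while grafting respects this identification vertexwise.

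The only genuinely delicate point — the ``main obstacle'' — is bookkeeping the ambient space: $\su_x(\tau)$ naturally lives in $\mathcal{T}(\widetilde{V})$ and $\TSu_{\{x\}}(\tau)$ in $\mathcal{T}(\widehat{V})$, and the hypothesis that the $\svec{\gop}{\cdot}$ are trivial is precisely what provides the operad morphism $\mathcal{T}(\widehat{V})\to\mathcal{T}(\widehat{V})/(\svec{\gop}{\cdot})\cong\mathcal{T}(\widetilde{V})$ under which the comparison is made. Once this identification is made explicit, the equality is immediate from either the path description or the one-line induction above.
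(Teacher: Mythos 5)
Your proposal is correct and its main argument is essentially the paper's own proof: both use the path descriptions of Propositions~\ref{succpath} and~\ref{tsuccpath}, observe that the singleton $\{x\}$ gives a unique path (so the $\svec{\gop}{\cdot}$ labels never arise on the path), and invoke the triviality of the operations $\svec{\gop}{\cdot}$ to match the off-path labels. Your added induction and the remark on identifying $\mathcal{T}(\widetilde{\gensp})$ with the quotient of $\mathcal{T}(\widehat{\gensp})$ are fine but not a different route, just extra detail.
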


\begin{proof}
There is only one path from the root to the the leafs in $\{ x \}$ of $\tau$. So, by Proposition \ref{succpath} and by Proposition \ref{tsuccpath}, if the operations $\left \{ \svec{\gop}{\cdot} \ \Big| \gop \in \gensp \right \}$ are trivial then the \suc and the \Tsuc with respect to $x$ coincide.
\end{proof}

The following results relate the \suc and the \Tsuc of a binary algebraic operad.
\begin{prop}\mlabel{pp:rests}
Let $\calp=\mathcal{T}(V)/(R)$ be a binary algebraic operad.
\begin{enumerate}
\item\mlabel{it:rests1} If the operations $\left \{ \svec{\gop}{\cdot} \ \Big| \gop \in \gensp \right \}$ are trivial, then there is a morphism of operads from $\su(\calp)$ to $\tsu(\calp)$ that extends the inclusion of $\widetilde{\gensp}$ in $\widehat{\gensp}$.
\item\mlabel{it:rests2} There is a morphism of operads from $\tsu(\calp)$ to $\su(\calp)$ that extends the linear map defined by
\begin{equation}
\svec{\gop}{\prec}\mapsto \svec{\gop}{\prec},\quad  \svec{\gop}{\succ}\mapsto \svec{\gop}{\succ},\quad \svec{\gop}{\cdot}\mapsto 0,\quad \gop\in\gensp \ . \mlabel{eq:sutsze}
\end{equation}
\end{enumerate}
\end{prop}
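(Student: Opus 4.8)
The two items follow the template used for Proposition~\mref{pp:suast}: in each case I would write down a $\BS_2$-equivariant linear map between the spaces of generating operations, extend it by functoriality of the free-operad construction to a morphism out of $\mathcal{T}(\widetilde{\gensp})$ (resp. $\mathcal{T}(\widehat{\gensp})$), and then show that the images of the generators of the ideal of relations of the source land in the ideal of relations of the target, so that the morphism descends to the quotient. I keep the presentation $\calp=\mathcal{T}(\gensp)/(R)$ with $R$ spanned as in~\mref{eq:pres'} by locally homogeneous $r_s=\sum_i c_{s,i}\,t_{s,i}$, recalling that $\lin(t_{s,i})$ is independent of $i$ for fixed $s$, so that $\su(R)$ (resp. $\tsu(R)$) is the $\BS$-span of the $\su_x(r_s)$, $x\in\lin(t_{s,i})$ (resp. the $\tsu_J(r_s)$, $\emptyset\ne J\subseteq\lin(t_{s,i})$).

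For part~(\mref{it:rests2}) I would take $\psi_0\colon\widehat{\gensp}\to\widetilde{\gensp}$ given by $\svec{\gop}{\prec}\mapsto\svec{\gop}{\prec}$, $\svec{\gop}{\succ}\mapsto\svec{\gop}{\succ}$, $\svec{\gop}{\cdot}\mapsto 0$; $\BS_2$-equivariance is an immediate comparison of the two prescribed actions, the only point worth noting being that $\svec{\gop}{\cdot}^{(12)}=\svec{\gop^{(12)}}{\cdot}$ is also sent to $0$. Let $\psi\colon\mathcal{T}(\widehat{\gensp})\to\su(\calp)$ be the induced morphism. The crux is to evaluate $\mathcal{T}(\psi_0)$ on $\tsu_J(\tau)$ for a labeled tree $\tau$, reading off the labels from the closed form in Proposition~\mref{tsuccpath}. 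If $J=\{x\}$ is a singleton, no vertex receives the mixed label of case~(iii); $\psi_0$ preserves the on-path labels $\svec{\gop}{\prec},\svec{\gop}{\succ}$ and sends each off-path $\svec{\gop}{\star}=\svec{\gop}{\prec}+\svec{\gop}{\succ}+\svec{\gop}{\cdot}$ to $\svec{\gop}{\prec}+\svec{\gop}{\succ}$, so that comparison with Proposition~\mref{succpath} gives $\mathcal{T}(\psi_0)(\tsu_{\{x\}}(\tau))=\su_x(\tau)$. If $|J|\ge 2$, pick two leaves $a,b\in J$: their lowest common ancestor $v$ lies on both root-to-leaf paths and is turned through in opposite directions, so by case~(iii) the treewise tensor factor of $\tsu_J(\tau)$ at $v$ has the form $\svec{\gop}{\cdot}$; since $\psi_0(\svec{\gop}{\cdot})=0$ and the treewise tensor is multilinear in the slot of $v$, we get $\mathcal{T}(\psi_0)(\tsu_J(\tau))=0$ regardless of the sums occurring at the other vertices. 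Specializing to $\tau=t_{s,i}$, summing over $i$ and using linearity of $\su_x$, I obtain $\psi(\tsu_{\{x\}}(r_s))=\su_x(r_s)$ and $\psi(\tsu_J(r_s))=0$ for $|J|\ge 2$; both vanish in $\su(\calp)$, hence $\psi$ descends to a morphism $\tsu(\calp)\to\su(\calp)$ which by construction extends the map~\mref{eq:sutsze}.

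For part~(\mref{it:rests1}), the inclusion $\iota\colon\widetilde{\gensp}\hookrightarrow\widehat{\gensp}$ is $\BS_2$-equivariant because the two actions agree on the $\prec,\succ$ summand; let $\phi\colon\mathcal{T}(\widetilde{\gensp})\to\tsu(\calp)$ be the induced morphism, obtained by composing $\mathcal{T}(\iota)$ with the projection. It remains to check $\phi(\su_x(r_s))=0$ in $\tsu(\calp)$. Under the hypothesis that the operations $\svec{\gop}{\cdot}$ are trivial in $\tsu(\calp)$, Lemma~\mref{le:tsesx} gives $\su_x(t_{s,i})=\tsu_{\{x\}}(t_{s,i})$ in $\tsu(\calp)$ for every $i$; summing over $i$ (legitimate since $\lin(t_{s,i})$ is independent of $i$) yields $\phi(\su_x(r_s))=\tsu_{\{x\}}(r_s)$, which is one of the defining relations of $\tsu(\calp)$ and hence zero. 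Therefore $\phi$ descends to a morphism $\su(\calp)\to\tsu(\calp)$ extending $\iota$.

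The $\BS_2$-equivariance checks and the descent bookkeeping are routine; the step I expect to need the most care is the vanishing $\mathcal{T}(\psi_0)(\tsu_J(\tau))=0$ for $|J|\ge 2$ in part~(\mref{it:rests2}) — one must argue that a vertex carrying a $\svec{\gop}{\cdot}$-label is \emph{forced} to appear, namely at the lowest common ancestor of two $J$-leaves, and that, by multilinearity of the treewise tensor, annihilating that one slot kills the whole element however complicated the labels at the other vertices are. I would also make explicit, in part~(\mref{it:rests1}), that ``the operations $\svec{\gop}{\cdot}$ are trivial'' is to be read as ``their images vanish in $\tsu(\calp)$'', which is precisely the situation in which Lemma~\mref{le:tsesx} applies.
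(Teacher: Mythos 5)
Your proposal is correct and follows the same route as the paper: for (\mref{it:rests1}) the $\BS_2$-equivariant inclusion $\widetilde{\gensp}\hookrightarrow\widehat{\gensp}$ plus Lemma~\mref{le:tsesx}, and for (\mref{it:rests2}) the equivariant map~(\mref{eq:sutsze}) together with the identities $\varphi(\TSu_{\{x\}}(\tau_{s,i}))=\su_x(\tau_{s,i})$ and $\varphi(\TSu_J(\tau_{s,i}))=0$ for $|J|>1$. You merely spell out details the paper leaves implicit (the lowest-common-ancestor argument forcing a $\svec{\gop}{\cdot}$-label, and the reading of ``trivial'' as vanishing in $\tsu(\calp)$), which is consistent with the paper's proof.
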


\begin{proof}
We assume that $R$ is given by Eq.~(\mref{eq:pres'}).\\
(\mref{it:rests1}) The inclusion $\widetilde{\gensp} \hookrightarrow \widehat{\gensp}$ is $\BS_{2}$-equivariant. So it induces a morphism of operads from $\mathcal{T}(\gensp)$ to $\tsu(\calp)$ whose kernel is the ideal generated by $\su(R)$ following Lemma~\mref{le:tsesx}.
\smallskip

\noindent (\mref{it:rests2})
The linear map defined by Eq.~(\mref{eq:sutsze}) is $\BS_2$-equivariant. Hence it induces a morphism of operads $\varphi: \tsu(\calp)\rightarrow \su(\calp)$, and $\varphi\left(\svec{\gop}{\star}\right)=\svec{\gop}{\ast}$ \ . Then, we have
$$\varphi(\TSu_{\{x\}}(\tau_{s,i}))=\su_{x}(\tau_{s,i}) \ , \ \forall x \in \lin(\tau_{s,i})$$
and
$$ \varphi(\TSu_{\{J\}}(\tau_{s,i}))=0 \ , \ \forall J \subseteq \lin(\tau_{s,i}), |J|>1 \ .$$
\end{proof}

If we take $\calp$ to be the operad of associative algebras, then we obtain the following results of Loday and Ronco~\mcite{LR}:
\begin{coro}
\begin{enumerate}
\item
Let $(A,\prec,\succ,\cdot)$ be a dendriform trialgebra. If the operation $\cdot$ is trivial, then $(A,\prec,\succ)$ becomes a dendriform dialgebra.
\item
Let $(A,\prec,\succ)$ be a dendriform dialgebra. Then $(A,\prec,\succ,0)$ carries a dendriform trialgebra structure, where 0 denotes the trivial product.
\end{enumerate}
\end{coro}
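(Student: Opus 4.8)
The plan is to obtain this corollary as the special case $\calp=\ass$ of Proposition~\ref{pp:rests}, where $\ass$ denotes the operad of associative algebras. First I would recall the identifications $\su(\ass)=\dend$, the operad of dendriform dialgebras, from Example~\ref{ex:dialg}, and $\tsu(\ass)=\tridend$, the operad of dendriform trialgebras, from the example immediately following it. Writing $\gop$ for the generating operation of $\ass$, an algebra over $\su(\ass)$ is then a triple $(A,\prec,\succ)$ in which $\prec,\succ$ are the images of $\svec{\gop}{\prec},\svec{\gop}{\succ}$ under the structure morphism, and an algebra over $\tsu(\ass)$ is a quadruple $(A,\prec,\succ,\cdot)$ with $\cdot$ the image of $\svec{\gop}{\cdot}$.

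For part (2), I would apply Proposition~\ref{pp:rests}(\ref{it:rests2}), which furnishes a morphism of operads $\varphi\colon\tsu(\ass)\to\su(\ass)$, i.e. $\varphi\colon\tridend\to\dend$, extending $\svec{\gop}{\prec}\mapsto\svec{\gop}{\prec}$, $\svec{\gop}{\succ}\mapsto\svec{\gop}{\succ}$, $\svec{\gop}{\cdot}\mapsto 0$. Restricting structure along $\varphi$ turns any $\su(\ass)$-algebra into a $\tsu(\ass)$-algebra; concretely, a dendriform dialgebra $(A,\prec,\succ)$ becomes a dendriform trialgebra whose third operation is the image of $0$, namely the trivial product. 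This is exactly the assertion that $(A,\prec,\succ,0)$ carries a dendriform trialgebra structure.

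For part (1), let $(A,\prec,\succ,\cdot)$ be a dendriform trialgebra with $\cdot=0$. I would argue directly from the presentation: the defining relations of $\tsu(\ass)$ are the $\TSu_J(r)=0$ for $r$ the associativity relator $v_1-v_2$ and $\emptyset\neq J\subseteq\{1,2,3\}$. Imposing $\cdot=0$ on $A$ and invoking Lemma~\ref{le:tsesx}, the three relations with $|J|=1$ become exactly $\su_x(r)=0$ for $x\in\{1,2,3\}$, which are the three defining relations of $\su(\ass)=\dend$; hence $(A,\prec,\succ)$ satisfies the dendriform dialgebra axioms and is a dendriform dialgebra. I do not expect a genuine obstacle here, since the corollary is a direct specialization; the one point deserving a little care is to verify that, once $\cdot=0$, the remaining relations of $\tsu(\ass)$ indexed by subsets $J$ with $|J|\geq 2$ impose nothing beyond the dendriform dialgebra axioms. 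By Proposition~\ref{tsuccpath}, any vertex at which two paths toward leaves of $J$ diverge carries a $\svec{\gop}{\cdot}$ label, and for a three-leaf tree every such relation $\TSu_J(v_1-v_2)=0$ contains a $\cdot$ in each of its two monomials, so it collapses to the trivial identity $0=0$ when $\cdot$ vanishes. Granting this short inspection, parts (1) and (2) follow from Proposition~\ref{pp:rests} together with the two operad identifications recalled above.
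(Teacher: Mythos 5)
Your proposal is correct and takes essentially the paper's route: the paper obtains both parts simply by specializing Proposition~\mref{pp:rests} to $\calp=\ass$, using the identifications $\su(\ass)=\dend$ and $\tsu(\ass)=\tridend$, which is exactly what you do for part (2) via the morphism $\tsu(\ass)\to\su(\ass)$ sending $\svec{\gop}{\cdot}$ to $0$. For part (1) you re-derive the content of item~(\mref{it:rests1}) of that proposition in the special case (via Lemma~\mref{le:tsesx} and the observation that every monomial of $\TSu_J(v_1-v_2)$ with $|J|\geq 2$ contains a $\cdot$, so those relations collapse when $\cdot=0$) rather than citing it; this is a harmless, and in fact slightly more explicit, version of the same argument.
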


\section{\Sucs,  \Tsucs and Manin black product}
\mlabel{sec:mp}
We now identify the \suc (resp. \Tsuc) of a binary quadratic operad $\calp$ with the Manin black product of $\prelie$ (resp. $\postlie$) with $\calp$.

\begin{defn}(\mcite{GK,Va})
{\rm Let $\calp=\mathcal{T}(V)/(R)$ and $\calq=\mathcal{T}(W)/(S)$ be two binary quadratic operads with finite-dimensional generating spaces. Define their {\bf Manin black product} by the formula
$$\calp\bullet\calq:=\mathcal{T}(V\otimes W\otimes\bfk_{\cdot}{\rm sgn}_{\BS_2})/(\Psi(R\otimes S)) \ , $$
where $\Psi$ is defined in Section 4.3 of \mcite{Va}.}
\mlabel{de:black}
\end{defn}

According to Proposition 25 of \mcite{Va}, notice that the Manin black product is symmetric and associative. Moreover, it is a bifunctor.

\subsection{\Sucs as the Manin black product by $\prelie$}

\begin{theorem}\label{ManinprdpreLie}\mlabel{thm:supl}
Let $\mathcal{P}$ be a binary quadratic operad. We have the isomorphism of operads $$\su(\mathcal{P})\cong \prelie\bullet\mathcal{P}.$$
\end{theorem}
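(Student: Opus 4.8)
The plan is to compare the two operads through their presentations. By Definition~\mref{de:black}, $\prelie\bullet\calp = \mathcal{T}(\gensp_{\prelie}\otimes V\otimes\bfk\cdot\mathrm{sgn}_{\BS_2})/(\Psi(S_{\prelie}\otimes R))$, where $\prelie=\mathcal{T}(\gensp_{\prelie})/(S_{\prelie})$ and $\calp=\mathcal{T}(V)/(R)$. Since $\prelie$ has a one-dimensional space of generators $\gensp_{\prelie}=\bfk\cdot{\centerdot}$ concentrated in arity~$2$ (the pre-Lie product, which as an $\BS_2$-representation is the regular representation $\bfk[\BS_2]$), the generating space of $\prelie\bullet\calp$ is $\bfk[\BS_2]\otimes V\otimes\mathrm{sgn}_{\BS_2}\cong V\otimes\bfk[\BS_2]$ as an $\BS_2$-module. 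On the other side, $\widetilde{V}=V\otimes(\bfk\prec\oplus\bfk\succ)$ with the $\BS_2$-action $\svec{\gop}{\prec}^{(12)}=\svec{\gop^{(12)}}{\succ}$, so $\widetilde{V}\cong V\otimes\bfk[\BS_2]$ as well. First I would write down an explicit $\BS_2$-equivariant isomorphism $\widetilde{V}\xrightarrow{\ \sim\ }\gensp_{\prelie}\otimes V\otimes\mathrm{sgn}_{\BS_2}$, matching $\svec{\gop}{\succ}$ and $\svec{\gop}{\prec}$ with $\mu\otimes\gop\otimes e$ and $\mu'\otimes\gop\otimes e$ (up to signs forced by equivariance); this induces an isomorphism of free operads $\mathcal{T}(\widetilde V)\cong\mathcal{T}(\gensp_{\prelie}\otimes V\otimes\mathrm{sgn}_{\BS_2})$.

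The heart of the argument is then to show that under this isomorphism the ideal $(\su(R))$ corresponds to the ideal $(\Psi(S_{\prelie}\otimes R))$, i.e. that the degree-$3$ relation spaces agree inside the common free operad's arity-$3$ component $\mathcal{T}(\widetilde V)(3)$. Both sides are spanned by taking a relator $r_s=\sum_i c_{s,i}\tau_{s,i}\in R$ (a quadratic relation, living in $\mathcal{T}(V)(3)$) and producing, for each of the three leaves, a vector in $\mathcal{T}(\widetilde V)(3)$. On the successor side these are the three elements $\su_x(r_s)$, $x=1,2,3$, and by Proposition~\mref{succpath} each $\su_x(\tau_{s,i})$ is obtained by the explicit path-relabeling recipe. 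On the Manin side, the map $\Psi$ from Section~4.3 of~\mcite{Va} takes $R\otimes S_{\prelie}$ into $\mathcal{T}(V\otimes\gensp_{\prelie}\otimes\mathrm{sgn})(3)$ using the three identifications $V\circ_{\rmi}V$, $V\circ_{\rmii}V$, $V\circ_{\rmiii}V$ of Eq.~(\mref{eq:type}) together with the pre-Lie relator $v_1+v_5+v_9-v_2-v_6-v_{10}$ (in the notation of the table, the relator $(x\cdot y)\cdot z - x\cdot(y\cdot z)$ symmetrized in the last two inputs). So the key computation is: the three images $\Psi(r_s\otimes(\text{pre-Lie relator}))$, decomposed over the Koszul-dual pairing used in the definition of $\Psi$, coincide under the generator isomorphism with the three $\su_x(r_s)$. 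I would verify this by expanding both in the basis of $\mathcal{T}(\widetilde V)(3)$ indexed by the twelve monomials $v_i$ (or their $V$-coefficient analogues), using Proposition~\mref{pp:prelie} — which already establishes $\su(\lie)=\prelie$ — as the prototype: the relator of $\prelie$ is precisely $\su_x$ applied to the Jacobi relator, and the general case is the ``$\calp$-twisted'' version of that identity.

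Two supporting points need to be checked. One is that quadraticity is genuinely used: the Manin black product is only defined for quadratic operads, and quadraticity guarantees $R\subseteq\mathcal{T}(V)(3)$ so that $\su(R)$ is concentrated in arity $3$, matching the arity-$3$ relator space of $\prelie\bullet\calp$; I would note this at the outset. The other is $\BS_3$-equivariance of the identification of relator spaces — but this is automatic from Lemma~\mref{lem:sactsu}, which says $\su_{\sigma^{-1}(x)}(\tau^\sigma)=\su_x(\tau)^\sigma$, so the span of $\{\su_x(r_s)\}$ is an $\BS_3$-submodule, exactly as required to compare with the $\BS_3$-module generated by $\Psi(R\otimes S_{\prelie})$.

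I expect the main obstacle to be bookkeeping rather than conceptual: reconciling the sign conventions in $\Psi$ (the $\mathrm{sgn}_{\BS_2}$ twist and the Koszul signs in Manin's definition) with the sign-free recursive definition of $\su_x$, and making the three-leaves/three-$\circ_{\rm type}$ correspondence canonical. The cleanest route is probably to reduce to the universal case by naturality: the assignment $\calp\mapsto\su(\calp)$ and $\calp\mapsto\prelie\bullet\calp$ are both functorial (the latter since $\bullet$ is a bifunctor, as noted after Definition~\mref{de:black}), both commute with the free-operad functor $\mathcal{T}(-)$ applied to an $\BS_2$-module, and both send a presentation to the presentation obtained by transforming the relators by the explicit arity-$3$ formula; so it suffices to check the isomorphism on the level of (i) generators — done above — and (ii) the universal arity-$3$ relator, i.e. an arbitrary element of $\mathcal{T}(V)(3)$, where it becomes the single identity that Proposition~\mref{pp:prelie} verifies in the case $V=\gensp_{\lie}$ and which extends verbatim by $\bfk[\BS_2]$-linearity in the generator.
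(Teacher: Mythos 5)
Your overall strategy coincides with the paper's: an $\BS_2$-equivariant identification of $\widetilde{\gensp}$ with $\prelie(2)\otimes\calp(2)\otimes\bfk.{\rm sgn}_{\BS_2}$, a reduction to a single leaf using the equivariance Lemma~\mref{lem:sactsu}, and a verification of $\bar{\eta}(\Psi(\text{relator}\otimes\gamma))=\su_x(\gamma)$ by expanding on the monomials $\gop\circ_{\rmi}\gopb$, $\gop\circ_{\rmii}\gopb$, $\gop\circ_{\rmiii}\gopb$. However, there is a concrete error at the heart of your key computation: the element you call the pre-Lie relator, $v_1+v_5+v_9-v_2-v_6-v_{10}$, is the cyclic sum of associators and does \emph{not} lie in the relation space of $\prelie$; nor is the pre-Lie relation the associator ``symmetrized in the last two inputs''. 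The (right) pre-Lie relation is the associator \emph{antisymmetrized} in the last two inputs, and the relation space is spanned as a vector space by $v_1-v_2+v_3-v_4$, $v_5-v_6+v_7-v_8$ and $v_9-v_{10}+v_{11}-v_{12}$. The computation that makes the theorem true is precisely $\bar{\eta}(\Psi((v_1-v_2+v_3-v_4)\otimes\gamma))=\su_x(\gamma)$, with the two $\BS_3$-translates giving $\su_z(\gamma)$ and $\su_y(\gamma)$; with your element the ideal you would impose is not the defining ideal of $\prelie\bullet\calp$, so the verification would fail. (You appear to have conflated it with the Jacobi relator $v_1+v_5+v_9$ of $\lie$, which is what enters Proposition~\mref{pp:prelie}, not the present argument.)

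A second, smaller issue is the closing ``reduce to the universal case by naturality'': Proposition~\mref{pp:prelie} is the statement $\su(\lie)=\prelie$ and says nothing about the map $\Psi$, so the identity you need, which concerns $\Psi$ evaluated against an arbitrary $\gamma\in\mathcal{T}(\gensp)(3)$, does not follow from the case $\gensp=\lie(2)$ ``by $\bfk[\BS_2]$-linearity in the generator''. The genuinely universal check is the one you mention earlier and the paper carries out: compute $\su_x$ on the three types via Proposition~\mref{succpath}, namely $\su_x(\gop\circ_{\rmi}\gopb)=\svec{\gop}{\prec}\circ_{\rmi}\svec{\gopb}{\prec}$, $\su_x(\gop\circ_{\rmii}\gopb)=\svec{\gop}{\succ}\circ_{\rmii}\svec{\gopb}{\star}$, $\su_x(\gop\circ_{\rmiii}\gopb)=\svec{\gop}{\prec}\circ_{\rmiii}\svec{\gopb}{\succ}$ for arbitrary $\gop,\gopb\in\gensp$, and match these against the expansion of $\Psi((v_1-v_2+v_3-v_4)\otimes-)$ on each type. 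With the relator corrected and this monomial check done directly, the rest of your outline (the quadraticity remark, the $\BS_2$-isomorphism on generators, the use of Lemma~\mref{lem:sactsu}) is exactly the paper's proof.
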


\begin{proof}
Denote the generating operation of $\prelie$ by $\mu$ and continue with the notations $v_i, 1\leq i\leq 12,$ of the table given in Section~\mref{ss:exam} with $\gop=\gopb=\mu$. The space of relations of $\prelie$ is generated as a vector space by $v_i-v_{i+1}+v_{i+2}-v_{i+3}$, $i=1,5,9$.

We define an isomorphism of $\BS_{2}$-modules by
\begin{equation}
\begin{array}{crcll}
\eta: & \prelie(2)\otimes \mathcal{P}(2)\otimes \bfk.{\rm sgn}_{{\BS}_2} & \to & \su(\mathcal{P})(2) & \\
 & \mu\otimes\gop\otimes 1& \mapsto & \svec{\gop}{\prec}& , \\
\end{array}
\mlabel{eq:twoiso1}
\end{equation}
which induces an isomorphism of $\BS_3$-modules:
$$\bar{\eta}:3(\prelie(2)\ot \mathcal{P}(2)\otimes \bfk.{\rm sgn}_{{\BS}_2})^{\otimes 2}\to 3\su(\mathcal{P})^{\otimes 2}.$$

Then we just need to prove that, for every relation $\gamma$ of $R$, we have
\begin{equation}
\bar{\eta}(\Psi((v_1-v_2+v_3-v_4)\otimes\gamma))=\su_x(\gamma),\mlabel{eq:p1}
\end{equation}
\begin{equation}
\bar{\eta}(\Psi((v_5-v_6+v_7-v_8)\otimes\gamma))=\su_z(\gamma),
\notag 
\end{equation}
\begin{equation}
\bar{\eta}(\Psi((v_9-v_{10}+v_{11}-v_{12})\otimes\gamma))=\su_y(\gamma).
\notag 
\end{equation}

If Eq.~(\mref{eq:p1}) holds, by lemma \mref{lem:sactsu}, we have
$$\bar{\eta}(\Psi((v_5-v_6+v_7-v_8)\otimes \gamma ))=\bar{\eta}(\Psi((v_1-v_2+v_3-v_4)\otimes \gamma^{\sigma_{1}^{-1}})^{\sigma_1})=\su_z(\gamma)$$ and $$\bar{\eta}(\Psi((v_9-v_{10}+v_{11}-v_{12})\otimes
\gamma))=\bar{\eta}(\Psi((v_1-v_2+v_3-v_4)\otimes\gamma^{\sigma_{2}^{-1}})^{\sigma_2})=\su_y(\gamma),$$ for every relation $\gamma$ of $R$, where $\sigma_1=(132),\sigma_2=(123)$. Thus we only need to prove Eq.~(\mref{eq:p1}) for every $\gamma\in \mathcal{T}(V)(3)$.

By the remark at the beginning of Section \mref{ss:exam}, we only need to prove Eq.~(\mref{eq:p1}) for every $\gamma\in \mathcal{T}(V)(3)$ in~Eq.~(\mref{eq:type}). To do this, we notice that, for all $\gop$ and $\gopb$ in $\gensp$, we have
\begin{eqnarray*}
\su_x(\gop \circ_{\rmi} \gopb) &=&\svec{\gop}{\prec}\circ_{\rmi} \svec{\gopb}{\prec}, \\ \su_x(\gop\circ_{\rmii} \gopb)
&=&\svec{\gop}{\succ} \circ_{\rmii} \svec{\gopb}{\star}, \\
\su_x(\gop\circ_{\rmiii} \gopb)
&=& \svec{\gop}{\prec}\circ_{\rmiii} \svec{\gopb}{\succ}.
\end{eqnarray*}
Then we obtain
\begin{eqnarray*}
\bar{\eta}(\Psi((v_1-v_2+v_3-v_4)\otimes(\gop\circ_{{\rm I}}\gopb)))
&=&\bar{\eta}(\Psi((\mu\circ_{\rm I}\mu)\otimes(\gop\circ_{{\rm I}}\gopb)))\\
&=&\bar{\eta}((\mu\otimes \gop \otimes 1)\circ_{{\rm I}}(\mu\otimes \gopb\otimes 1))\\ &=&\svec{\gop}{\prec}\circ_{{\rm I}}\svec{\gopb}{\prec}\\
&=&\su_x(\gop\circ_{{\rm I}}\gopb).
\end{eqnarray*}
In the same way, we prove that Eq.~(\mref{eq:p1}) holds for the monomials $\gop \circ_{\rmii} \gopb$ and $\gop \circ_{\rmiii} \gopb$.
So, we conclude with
\begin{eqnarray*}
&&\bar{\eta}(\Psi((v_1-v_2+v_3-v_4)\otimes\gamma))\\
&=&\bar{\eta}(\Psi((v_1-v_2+v_3-v_4)\otimes \mu\circ_{\rmi} \mu -\mu'\circ_{\rmii} \mu + \mu'\circ_{\rmii} \mu' - \mu\circ_{\rmiii} \mu'))\\
&=&\su_x(\gamma) \ .
\end{eqnarray*}
\end{proof}

Repeated application of the theorem gives $\su^2(\mathcal{P})\cong \prelie \bullet \prelie \bullet \mathcal{P}$ and, more generally, $\su^n(\mathcal{P})\cong \prelie^{\bullet n}\bullet \mathcal{P}$. Thus we have an action of $\BS_2$ on $\su^2(\mathcal{P})$ by exchanging the two $\prelie$ factors and, more generally, an action of $\BS_n$ on $\su^n(\mathcal{P})$ by exchanging the $n$ $\prelie$ factors. See Section~\mref{sec:symm} for symmetries on more general operads.

In the nonsymmetric framework, the analogue of Theorem \ref{ManinprdpreLie} is the following result.
\begin{theorem}\label{thm:suplns}
Let $\calp$ be a binary quadratic nonsymmetric operad. There is an isomorphism of nonsymmetric operads
$$ \su(\calp) \cong {\it Dend } \ \blacksquare  \ \calp \ , $$
where $\blacksquare$ denotes the black square product in~\mcite{EG,Va}.
\end{theorem}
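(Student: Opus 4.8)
The plan is to mirror the proof of Theorem~\ref{ManinprdpreLie}, replacing the symmetric Manin black product $\bullet$ with the nonsymmetric black square product $\blacksquare$ and replacing $\prelie$ with $\dend$. Recall from Example~\ref{ex:dialg} that $\dend = \su(\ass)$, and that in the nonsymmetric setting the space $\mathcal{T}_{ns}(V)(3)$ is spanned by the three types of trees $\gop\circ_{\rmi}\gopb$, $\gop\circ_{\rmii}\gopb$, $\gop\circ_{\rmiii}\gopb$ of Eq.~\eqref{eq:type}, but now \emph{without} the $\BS_3$-action, so there are genuinely three independent copies of $V\otimes W$ rather than twelve. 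First I would write $\dend = \mathcal{T}_{ns}(\bfk\prec\oplus\bfk\succ)/(R_{\dend})$ explicitly, where $R_{\dend}$ is spanned by the three dendriform relations; in the $v_i$-notation adapted to the nonsymmetric case these are the analogues of $v_1-v_2+v_3-v_4$ along each of the three ``directions'' $\rmi,\rmii,\rmiii$. The black square product $\dend\,\blacksquare\,\calp$ is then $\mathcal{T}_{ns}(\{\prec,\succ\}\otimes W)/(\Psi(R_{\dend}\otimes S))$ where $\Psi$ is the appropriate nonsymmetric analogue of the map used in Definition~\ref{de:black}; its effect on a pair of tree monomials of the same type $\rmi$, $\rmii$ or $\rmiii$ is simply to compose them componentwise, e.g.\ $(\prec\otimes w_1)\circ_{\rmi}(\succ\otimes w_2)$.

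The core of the argument is to define an isomorphism of vector spaces (carrying no symmetric-group structure now, which actually \emph{simplifies} matters)
\begin{equation}
\eta:\ \dend(2)\otimes\calp(2)\ \longrightarrow\ \su(\calp)(2),\qquad
\prec\otimes\gop\ \mapsto\ \svec{\gop}{\prec},\quad \succ\otimes\gop\ \mapsto\ \svec{\gop}{\succ},
\notag
\end{equation}
which is visibly a bijection since $\su(\calp)(2) = \widetilde{V} = V\otimes(\bfk\prec\oplus\bfk\succ)$. This induces an isomorphism $\bar\eta$ of the arity-$3$ components (three copies of each side), and I must check that under $\bar\eta$ the relations match: for every relation $\gamma\in S$, $\bar\eta(\Psi(R_{\dend}^{(\rmi)}\otimes\gamma)) = \su_x(\gamma)$ and similarly for the $\rmii$, $\rmiii$ directions with the appropriate leaf $z$ or $y$. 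Because there is no $\BS_3$-action to exploit, I cannot use Lemma~\ref{lem:sactsu} to reduce the three directions to one as in the symmetric proof; instead I would verify all three directly, which is harmless since each reduces to the same three identities
\begin{equation}
\su_x(\gop\circ_{\rmi}\gopb) = \svec{\gop}{\prec}\circ_{\rmi}\svec{\gopb}{\prec},\quad
\su_x(\gop\circ_{\rmii}\gopb) = \svec{\gop}{\succ}\circ_{\rmii}\svec{\gopb}{\star},\quad
\su_x(\gop\circ_{\rmiii}\gopb) = \svec{\gop}{\prec}\circ_{\rmiii}\svec{\gopb}{\succ}
\notag
\end{equation}
already recorded in the proof of Theorem~\ref{ManinprdpreLie} (these come straight from Proposition~\ref{succpath} by tracking which way the root-to-leaf path turns), together with the parallel identities for $\su_z$ and $\su_y$ obtained by the same path analysis. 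Extending $\bar\eta$ by linearity then identifies $\su_x(\gamma)$ with $\bar\eta(\Psi(\text{dend-relation}\otimes\gamma))$ for arbitrary $\gamma\in\mathcal{T}_{ns}(V)(3)$, hence the ideals agree and $\su(\calp)\cong \dend\,\blacksquare\,\calp$ as nonsymmetric operads.

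The main obstacle I anticipate is purely bookkeeping: pinning down the precise definition of the nonsymmetric $\Psi$ and the nonsymmetric $v_i$-table so that ``$\dend$-relation in direction $\rmi$'' is unambiguous, and confirming that the black square product of~\cite{EG,Va} is defined exactly by $\mathcal{T}_{ns}(V\otimes W)/(\Psi(R\otimes S))$ with $\Psi$ the componentwise composition on matching tree types --- once that dictionary is fixed, the verification is a direct transcription of the symmetric computation with twelve basis elements collapsing to three. An alternative, cleaner route worth mentioning: since $\dend\cong\su(\ass)$ and (by the results of~\cite{EG}) the black square product with $\dend$ is known to coincide with the ``$\dend$-power'' construction, one could instead invoke Proposition~\ref{succregop} relating $\su$ and $\mathit{Reg}$, together with Theorem~\ref{ManinprdpreLie} and the identity $\mathit{Reg}(\dend\,\blacksquare\,\calp)\cong\prelie\bullet\mathit{Reg}(\calp)$, and then descend along the forgetful functor; but the direct proof above is shorter and self-contained, so I would present that and relegate the adjunction argument to a remark.
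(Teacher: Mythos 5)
Your overall route is exactly the one the paper takes: its entire proof of this theorem is the sentence ``the proof is similar to the proof of Theorem~\mref{ManinprdpreLie}'', and your proposal is precisely that transcription (define $\eta$ on generators by $\prec\otimes\gop\mapsto\svec{\gop}{\prec}$, $\succ\otimes\gop\mapsto\svec{\gop}{\succ}$, then match the image of each dendriform relation tensored with $\gamma$ against $\su_x(\gamma)$, $\su_y(\gamma)$, $\su_z(\gamma)$ by evaluating on tree monomials via Proposition~\mref{succpath}). So in spirit there is nothing to object to.

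One piece of your bookkeeping is wrong, though, and since you build the setup on it, it needs correcting. In the nonsymmetric framework $\mathcal{T}_{ns}(\gensp)(3)$ is \emph{not} ``three independent copies of $V\otimes W$ indexed by the types $\rmi,\rmii,\rmiii$'': the types $\rmii$ and $\rmiii$ of Eq.~(\mref{eq:type}) are the left-comb tree with its leaves cyclically permuted, and such permutations do not exist without the $\BS_3$-action. The correct decomposition is into the \emph{two} planar $3$-trees, the left comb $(x_1x_2)x_3$ and the right comb $x_1(x_2x_3)$, so $\mathcal{T}_{ns}(\gensp)(3)\cong \gensp^{\otimes 2}\oplus\gensp^{\otimes 2}$, and the nonsymmetric $\Psi$ of \cite{EG,Va} pairs tree monomials of the \emph{same planar shape} (left with left, right with right), not ``of the same type $\rmi$, $\rmii$ or $\rmiii$''. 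Accordingly, none of the three dendriform relations lives in a single ``direction'': each mixes the two planar shapes, and the correct dictionary is simply that the $i$-th dendriform relation (which is $\su_{x_i}$ applied to associativity, by Example~\mref{ex:dialg}) pairs with $\gamma$ to give $\su_{x_i}(\gamma)$. The monomial verification then requires the six path identities for $\su_1,\su_2,\su_3$ applied to the two combs, e.g. $\su_1(\gop\circ_{\rmi}\gopb)=\svec{\gop}{\prec}\circ_{\rmi}\svec{\gopb}{\prec}$ for the left comb but $\su_1$ of the right comb equal to $\svec{\gop}{\prec}$ grafted at its second input with $\svec{\gopb}{\star}$; only some of these literally appear in the symmetric proof, the rest coming from the same path analysis, as you say. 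With the arity-$3$ space and $\Psi$ described this way, your argument goes through and coincides with what the paper intends; your alternative route via $\mathit{Reg}$, Proposition~\ref{succregop} and Theorem~\mref{ManinprdpreLie} would also work, but note the paper itself warns that the Manin black product does not commute with regularization, so that descent has to be argued with some care rather than invoked as a formal identity.
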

\begin{proof}
The proof is similar to the proof of Theorem~\ref{ManinprdpreLie}.
\end{proof}

\begin{remark}
{\rm
Note that Theorem~\mref{thm:supl} gives a convenient way to compute the black Manin product of a binary operad with the operad $\prelie$, as we can see from the following corollary. Further examples are given in the Appendix.
}
\end{remark}
\begin{coro}
\begin{enumerate}
\item
(\mcite{Va})\; We have $\prelie\bullet\comm=\zinb$ and $\prelie\bullet\ass=\dend$.
\mlabel{it:zinb}
\item
(\mcite{Uc2})\; We have $\prelie\bullet{\it Poisson}={\it PrePoisson}$.
\mlabel{it:prepois}
\end{enumerate}
\end{coro}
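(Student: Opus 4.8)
The plan is to deduce all three identities as immediate consequences of Theorem~\ref{thm:supl} together with the explicit successor computations already carried out in Section~\ref{sec:conc}. First I would observe that $\comm$, $\ass$ and $\mathit{Poisson}$ are binary quadratic operads (each is generated by operations of arity $2$ with relations concentrated in arity $3$), so Theorem~\ref{thm:supl} applies and gives canonical isomorphisms $\prelie\bullet\comm\cong\su(\comm)$, $\prelie\bullet\ass\cong\su(\ass)$ and $\prelie\bullet\mathit{Poisson}\cong\su(\mathit{Poisson})$.

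Next I would simply invoke the identifications of these successors made earlier: Proposition~\ref{pp:zinb} shows $\su(\comm)=\zinb$; Example~\ref{ex:dialg} shows $\su(\ass)=\dend$; and Proposition~\ref{pp:prepois} shows $\su(\mathit{Poisson})=\mathit{PrePoisson}$. Composing each of these with the corresponding isomorphism from Theorem~\ref{thm:supl} yields $\prelie\bullet\comm=\zinb$, $\prelie\bullet\ass=\dend$ and $\prelie\bullet\mathit{Poisson}=\mathit{PrePoisson}$, which are exactly the three claimed equalities.

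The only point deserving care — and the step I would actually spend time on — is making the presentations match in the Poisson case, since Proposition~\ref{pp:prepois} was only sketched. Concretely, one should check that applying $\su_x,\su_y,\su_z$ to the Jacobi relation, the commutativity relation, the associativity relation and the Leibniz compatibility relation of $\mathit{Poisson}$, and then substituting $\svec{\{,\}}{\prec}\mapsto\cdot$ and $\svec{\circ}{\prec}\mapsto\ast$, reproduces precisely Aguiar's defining relations for a pre-Poisson algebra. Because $\circ$ is commutative one has $\svec{\circ}{\prec}^{(12)}=\svec{\circ}{\succ}$, so $\ast$ is only a Zinbiel (not symmetric) product — exactly as in Proposition~\ref{pp:zinb}; and because $\{,\}$ is skew-symmetric one has $\svec{\{,\}}{\prec}^{(12)}=-\svec{\{,\}}{\succ}$, so $\cdot$ is a pre-Lie product — exactly as in Proposition~\ref{pp:prelie}. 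The mixed compatibility axioms of the pre-Poisson algebra then arise from $\su_x,\su_y,\su_z$ applied to the Leibniz relation. Once this bookkeeping is verified the corollary follows, and I do not anticipate any further obstacle.
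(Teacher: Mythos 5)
Your proposal is correct and follows essentially the same route as the paper, which likewise deduces the corollary by combining Theorem~\mref{thm:supl} with Proposition~\mref{pp:zinb}, Example~\mref{ex:dialg} and Proposition~\mref{pp:prepois}. The extra bookkeeping you sketch for the Poisson case is exactly the ``similar argument'' the paper leaves implicit in Proposition~\mref{pp:prepois}, so no gap remains.
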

\begin{proof}
Item~(\mref{it:zinb}) follows from Proposition~\mref{pp:zinb} and Theorem~\mref{thm:supl} while Item~(\mref{it:prepois}) follows from Proposition~\mref{pp:prepois} and Theorem~\mref{thm:supl}.
\end{proof}

\begin{remark}
{\rm Note that the Manin black product does not commute with the functor of regularization, defined in Section~\ref{sect:suns-s}, whereas the \suc does, according to Proposition~\ref{succregop}.
}
\end{remark}

\subsection{\TSucs and Manin black product by $\postlie$}

\begin{theorem}\label{ManinprdPostLie}\mlabel{thm:tsupl}
Let $\mathcal{P}$ be a binary quadratic operad. We have the isomorphism of operads $$\tsu(\mathcal{P})\cong\postlie \bullet\mathcal{P}.$$
\end{theorem}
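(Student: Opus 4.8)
The plan is to mimic the proof of Theorem~\ref{ManinprdpreLie}, replacing $\prelie$ by $\postlie$ and $\su$ by $\tsu$ throughout, and accounting for the extra generating operation $\svec{\gop}{\cdot}$. The operad $\postlie$ is binary quadratic with a two-dimensional space of generators $\mathcal{T}(W)(2)$ (spanned by the skew-symmetric bracket $[,]$ and the product $\circ$), so $\postlie(2)\otimes\calp(2)\otimes\bfk.\mathrm{sgn}_{\BS_2}$ is naturally identified with $\widehat{\gensp}(2) = \gensp\otimes(\bfk\prec\oplus\bfk\succ\oplus\bfk\cdot)$: I would send $\circ\otimes\gop\otimes 1\mapsto\svec{\gop}{\prec}$ and $[,]\otimes\gop\otimes 1\mapsto\svec{\gop}{\cdot}$ (checking this is $\BS_2$-equivariant using $[,]^{(12)}=-[,]$ and the $\BS_2$-action $\svec{\gop}{\prec}^{(12)}=\svec{\gop^{(12)}}{\succ}$, $\svec{\gop}{\cdot}^{(12)}=\svec{\gop^{(12)}}{\cdot}$), and this extends to an isomorphism $\bar\eta$ of $\BS_3$-modules between the degree-three pieces of the two free operads.

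First I would set up, exactly as before, that it suffices to verify for each generating monomial $\gamma\in\mathcal{T}(V)(3)$ of the three types in Eq.~(\mref{eq:type}) the seven identities matching $\bar\eta(\Psi(\rho\otimes\gamma))$ against $\tsu_J(\gamma)$ for all seven nonempty $J\subseteq\{x,y,z\}$, where $\rho$ ranges over a generating set of the relation space of $\postlie$; then Lemma~\mref{lem:sacttsu} lets me reduce from all $J$ (and all three ``rotations'' of the $\BS_3$-orbit) to a single representative identity for each type, the way $\sigma_1=(132)$, $\sigma_2=(123)$ were used in Theorem~\ref{ManinprdpreLie}. The relation space of $\postlie$ has three $\BS_3$-orbit generators corresponding to the three displayed PostLie axioms (the Jacobi relation in $[,]$, the pre-Lie-type relation mixing $\circ$ and $[,]$, and the mixed Leibniz relation); under $\Psi$ these should map to the elements of $3\,\postlie(2)^{\otimes 2}$ that $\bar\eta$ carries to $\tsu_{\{x\}}$, $\tsu_{\{x,y\}}$, $\tsu_{\{x,y,z\}}$ of the generating monomials, respectively. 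Concretely I would tabulate, for $\gop,\gopb\in\gensp$, the values $\tsu_J(\gop\circ_\rmi\gopb)$, $\tsu_J(\gop\circ_\rmii\gopb)$, $\tsu_J(\gop\circ_\rmiii\gopb)$ for each $J$ (there are only a handful of distinct outputs, read off directly from Proposition~\mref{tsuccpath}), and check these coincide with $\bar\eta$ applied to the appropriate $\circ_{\rmi},\circ_{\rmii},\circ_{\rmiii}$ compositions of $\circ$'s, $[,]$'s computed via $\Psi$.

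The main obstacle is bookkeeping rather than conceptual: one must correctly compute $\Psi$ on the PostLie relations and check that the resulting degree-three expressions, after applying $\bar\eta$, reproduce the full list of $\tsu_J$-relations defining $\tsu(\calp)$ in Proposition's sense — in particular that the ``otherwise'' branch of the $\tsu$ recursion (which introduces $\svec{\gop}{\cdot}$ and splits into two subtrees) lines up exactly with the terms of $\Psi$ involving the bracket generator. Care is needed with signs coming from $\bfk.\mathrm{sgn}_{\BS_2}$ and from $[,]$ being skew-symmetric; the sanity check is that specializing to the case where $\svec{\gop}{\cdot}$ is set to zero must recover Theorem~\mref{thm:supl} via Proposition~\mref{pp:rests}, and restricting to the $\circ$-only generators of $\postlie$ must reproduce the $\su$-side computation. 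Once the three representative identities are verified for the three monomial types, the theorem follows, and iterating it gives $\tsu^n(\calp)\cong\postlie^{\bullet n}\bullet\calp$ as in the $\prelie$ case.
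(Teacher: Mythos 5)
Your proposal takes essentially the same route as the paper's proof: the same $\BS_2$-isomorphism $\eta$ (sending $\circ\otimes\gop\otimes 1$ to $\svec{\gop}{\prec}$ and $[,]\otimes\gop\otimes 1$ to $\svec{\gop}{\cdot}$), the same reduction via Lemma~\mref{lem:sacttsu} to three representative identities, one per $\BS_3$-orbit of the PostLie relations, and the same verification on the three monomial types of Eq.~(\mref{eq:type}) read off from Proposition~\mref{tsuccpath}. One minor slip: the correct pairing is Jacobi-type relation $\leftrightarrow \tsu_{\{x,y,z\}}$, pre-Lie-type $\leftrightarrow \tsu_{\{x\}}$, Leibniz-type $\leftrightarrow \tsu_{\{x,y\}}$ (your ``respectively'' scrambles this order), but the tabulation you propose would immediately correct it.
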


\begin{remark}
{\rm
As in the case of \sucs, Theorem \ref{ManinprdPostLie} makes it easy to compute the black Manin product of $\postlie$ with any binary operad $\calp$. Others examples are given in the Appendix.
}
\end{remark}

\begin{proof}
The sketch of this proof is similar to the one of the proof of Theorem \ref{ManinprdpreLie}.\\
Denote the generating operations $[,]$ and $\circ$ of $\postlie$ by $\pll$ and $\plc$ respectively. Then $\pll'=-\pll$. The space of relations of $\postlie$ is generated as a vector space by
 \begin{equation}
\pll\circ_{{\rm I}}\pll+\pll\circ_{{\rm II}}\pll+\pll\circ_{{\rm III}}\pll \ ,
\notag 
\end{equation}
\begin{equation}\plc\circ_{{\rm I}}\plc-\plc'\circ_{{\rm II}}\plc+\plc'\circ_{{\rm II}}\plc'-\plc'\circ_{{\rm II}}\pll-\plc\circ_{{\rm III}}\plc' \ ,
\notag 
\end{equation}
\begin{equation}\plc\circ_{{\rm I}}\pll-\pll\circ_{{\rm III}}\plc'+\pll\circ_{{\rm II}}\plc \ ,
\notag 
\end{equation}
\begin{equation}
\plc\circ_{{\rm I}}\plc'-\plc'\circ_{{\rm III}}\plc'-\plc\circ_{{\rm II}}\plc+\plc'\circ_{{\rm III}}\plc+\plc'\circ_{{\rm III}}\pll \ ,
notag 
\end{equation}
\begin{equation}
\plc\circ_{{\rm II}}\plc'-\plc'\circ_{{\rm I}}\plc'-\plc\circ_{{\rm III}}\plc+\plc'\circ_{{\rm I}}\plc-\plc'\circ_{{\rm I}}\pll \ ,
\notag 
\end{equation}
\begin{equation}
-\plc\circ_{{\rm II}}\pll-\pll\circ_{{\rm III}}\plc+\pll\circ_{{\rm I}}\plc' \ ,
\notag 
\end{equation}
and
\begin{equation}
-\plc\circ_{{\rm III}}\pll-\pll\circ_{{\rm I}}\plc+\pll\circ_{{\rm II}}\plc' \ .
\notag 
\end{equation}

We define an isomorphism of $\BS_{2}$-modules by
\begin{equation}
\begin{array}{crcll}
\eta: & \postlie(2)\otimes \mathcal{P}(2)\otimes \bfk.{\rm sgn}_{{\BS}_2} & \to & \tsu(\mathcal{P})(2) & \\
 & \pll\otimes\gop\otimes 1& \mapsto & \svec{\gop}{\cdot}& \\
 & \plc\otimes\gop\otimes 1 & \mapsto & \svec{\gop}{\prec} &  \\
\end{array}
\mlabel{eq:twoiso}
\end{equation}
which induces an isomorphism of $\BS_3$-modules:
$$\bar{\eta}:3(\postlie(2)\ot \mathcal{P}(2)\otimes \bfk.{\rm sgn}_{{\BS}_2})^{\otimes 2}\to 3\tsu(\mathcal{P})^{\otimes 2}.$$

Then we just need to prove that, for every relation $\gamma$ of $\calp$, we have
\begin{equation}
\bar{\eta}(\Psi((\pll\circ_{{\rm I}}\pll+\pll\circ_{{\rm II}}\pll+\pll\circ_{{\rm III}}\pll)\otimes\gamma))=\tsu_{\{x,y,z\}}(\gamma),\mlabel{eq:tp1}
\end{equation}
\begin{equation}
\bar{\eta}(\Psi((\plc\circ_{{\rm I}}\plc-\plc'\circ_{{\rm II}}\plc+\plc'\circ_{{\rm II}}\plc'-\plc'\circ_{{\rm II}}\pll-\plc\circ_{{\rm III}}\plc')\otimes\gamma))=\tsu_{\{x\}}(\gamma),\mlabel{eq:tp2}
\end{equation}
\begin{equation}
\bar{\eta}(\Psi((\plc\circ_{{\rm I}}\plc'-\plc'\circ_{{\rm III}}\plc'-\plc\circ_{{\rm II}}\plc+\plc'\circ_{{\rm III}}\plc+\plc'\circ_{{\rm III}}\pll)\otimes\gamma))=\tsu_{\{y\}}(\gamma),
\notag 
\end{equation}
\begin{equation}
\bar{\eta}(\Psi((\plc\circ_{{\rm II}}\plc'-\plc'\circ_{{\rm I}}\plc'-\plc\circ_{{\rm III}}\plc+\plc'\circ_{{\rm I}}\plc-\plc'\circ_{{\rm I}}\pll)\otimes\gamma))=\tsu_{\{z\}}(\gamma),
\notag 
\end{equation}
\begin{equation}
\bar{\eta}(\Psi((\plc\circ_{{\rm I}}\pll-\pll\circ_{{\rm III}}\plc'+\pll\circ_{{\rm II}}\plc)\otimes\gamma))=\tsu_{\{x,y\}}(\gamma).\mlabel{eq:tp5}
\end{equation}

\begin{equation}
\bar{\eta}(\Psi((-\plc\circ_{{\rm II}}\pll-\pll\circ_{{\rm III}}\plc+\pll\circ_{{\rm I}}\plc')\otimes\gamma))=\tsu_{\{y,z\}}(\gamma).
\notag 
\end{equation}

\begin{equation}
\bar{\eta}(\Psi((-\plc\circ_{{\rm III}}\pll-\pll\circ_{{\rm I}}\plc+\pll\circ_{{\rm II}}\plc')\otimes\gamma))=\tsu_{\{x,z\}}(\gamma).
\notag 
\end{equation}

By Lemma \mref{lem:sacttsu}, the same argument as in the preLie case implies that we just need to prove Eq.~(\mref{eq:tp1}), Eq.~(\mref{eq:tp2}) and Eq.~(\mref{eq:tp5}).

By Section \mref{ss:exam}, we only need to prove Eq.~(\mref{eq:p1}) for every $\gamma\in \mathcal{T}(V)(3)$ in~Eq.~(\mref{eq:type}). To do this, we notice that, for all $\gop$ and $\gopb$ in $\gensp$, we have
\begin{equation}
\tsu_{\{x\}}(\gop \circ_{\rmi} \gopb)=\svec{\gop}{\prec}\circ_{\rmi} \svec{\gopb}{\prec},\; \tsu_{\{x,y\}}(\gop \circ_{\rmi} \gopb) =\svec{\gop}{\prec}\circ_{\rmi} \svec{\gopb}{\cdot},\;
\tsu_{\{x,y,z\}}(\gop \circ_{\rmi} \gopb) =\svec{\gop}{\cdot}\circ_{\rmi} \svec{\gopb}{\cdot},
\notag 
\end{equation}
\begin{equation}
\tsu_{\{x\}}(\gop\circ_{\rmii} \gopb)
=\svec{\gop}{\succ} \circ_{\rmii} \svec{\gopb}{\star},\;  \tsu_{\{x,y\}}(\gop\circ_{\rmii} \gopb)
=\svec{\gop}{\cdot} \circ_{\rmii} \svec{\gopb}{\prec},\;
\tsu_{\{x,y,z\}}(\gop\circ_{\rmii} \gopb)
=\svec{\gop}{\cdot} \circ_{\rmii} \svec{\gopb}{\cdot},
\notag 
\end{equation}
\begin{equation}
\tsu_{\{x\}}(\gop\circ_{\rmiii} \gopb)
=\svec{\gop}{\prec}\circ_{\rmiii} \svec{\gopb}{\succ},\; \tsu_{\{x,y\}}(\gop\circ_{\rmiii} \gopb)
=\svec{\gop}{\cdot}\circ_{\rmiii} \svec{\gopb}{\succ},\;
\tsu_{\{x,y,z\}}(\gop\circ_{\rmiii} \gopb)
=\svec{\gop}{\cdot}\circ_{\rmiii} \svec{\gopb}{\cdot}.
\notag 
\end{equation}
\mlabel{lem:tsutb}

Then, we have
\begin{itemize}
\item $\bar{\eta}(\Psi((\pll\circ_{{\rm I}}\pll+\pll\circ_{{\rm II}}\pll+\pll\circ_{{\rm III}}\pll)\otimes(\gop\circ_{{\rm I}}\gopb)))=\tsu_{\{x,y,z\}}(\gop\circ_{{\rm I}}\gopb)\ ,$
\item $ \bar{\eta}(\Psi((\plc\circ_{{\rm I}}\plc-\plc'\circ_{{\rm II}}\plc+\plc'\circ_{{\rm II}}\plc'-\plc'\circ_{{\rm II}}\pll-\plc\circ_{{\rm III}}\plc')\otimes(\gop\circ_{{\rm I}}\gopb)))=\tsu_{\{x\}}(\gop\circ_{{\rm I}}\gopb) \ , $
\item  $\bar{\eta}(\Psi((\plc\circ_{{\rm I}}\pll-\pll\circ_{{\rm III}}\plc'+\pll\circ_{{\rm II}}\plc)\otimes(\gop\circ_{{\rm I}}\gopb)))=\tsu_{\{x,y\}}(\gop\circ_{{\rm I}}\gopb) \ .$
\end{itemize}
In the same way, we prove that the equations $(\mref{eq:tp1})$, $(\mref{eq:tp2})$ and $(\mref{eq:tp5})$ hold for the monomials $\gop\circ_{{\rm II}}\gopb$ and $\gop\circ_{{\rm III}}\gopb$. This completes the proof.
\end{proof}

\begin{remark}
{\rm
Theorem~\mref{thm:supl} can be proved in a different way, from Theorem \ref{ManinprdPostLie}, using the following commutative diagram:
$$
 \xymatrix{
\tsu(\mathcal{P}) \ar[r]^{\hspace{-0.4cm}\cong} \ar[d] & \postlie \bullet \mathcal{P} \ar[d] & \\
\su(\mathcal{P}) \ar[r] & \prelie \bullet \mathcal{P} & .}
$$
The two vertical morphisms are surjective. And, one can see that the top isomorphism preserves their kernels. Then, the bottom map turns out to be an isomorphism.
}
\end{remark}

\delete{
\begin{remark}
{\rm
Theorem~\mref{thm:supl} can be proved in a different way, from Theorem \ref{ManinprdPostLie} by the following commutative diagram:
\begin{equation}
 \xymatrix{
\tsu(\mathcal{P}) \ar[r]^{\cong} \ar[d] & \postlie \bullet \mathcal{P} \ar[d] \\
\su(\mathcal{P}) \ar[r] & \prelie \bullet \mathcal{P}
}
\notag 
\end{equation}
where the top morphism is given by Eq.~(\mref{eq:twoiso}), the left vertical morphism is given by Eq.~(\mref{eq:sutsze}), the right vertical morphism is given by sending $\beta\otimes\omega\otimes 1$ to zero and $\epsilon\otimes\omega\otimes 1$ to $\mu\otimes\omega\otimes 1$, and the bottom morphism is given by Eq.~(\mref{eq:twoiso1}). Note that the left vertical morphism is surjective and its kernel is the operadic idea generated by the operation $\svec{\gop}{\cdot}$, and the right vertical morphism is also surjective and its kernel is the operadic idea generated by the operation $\beta\otimes\omega\otimes 1$. Also note that the top isomorphism is given by sending $\beta\otimes\omega\otimes 1$ to $\svec{\gop}{\cdot}$ and $\epsilon\otimes\omega\otimes 1$ to $\svec{\gop}{\prec}$, hence it preserves the kernels. Therefore the bottom map induces an isomorphism of operads.}
\end{remark}
}

\begin{coro}\mlabel{tsucass}
We have $\postlie\bullet{\it Ass}=\tridend$.
\end{coro}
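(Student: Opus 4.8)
The plan is to read this identity off from Theorem~\ref{thm:tsupl} once $\tsu(\ass)$ has been identified with $\tridend$. First I would note that $\ass$ is binary and quadratic: it is $\mathcal{T}(V)/(R)$ with $V=\bfk[\BS_2]$ spanned by $\mu$ and $\mu'=\mu^{(12)}$, and with $R$ the $\BS_3$-submodule of $\mathcal{T}(V)(3)$ generated by the quadratic associativity relation $\mu\circ_{\rmi}\mu-\mu'\circ_{\rmii}\mu$ (corresponding to $(xy)z-x(yz)$). Hence Theorem~\ref{thm:tsupl} applies and yields $\postlie\bullet\ass\cong\tsu(\ass)$, so everything reduces to checking $\tsu(\ass)=\tridend$ --- a fact already recorded in the examples of Section~\ref{sec:conc}.

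To verify that fact I would argue as follows. Put $\prec:=\svec{\mu}{\prec}$, $\succ:=\svec{\mu}{\succ}$ and $\cdot:=\svec{\mu}{\cdot}$; since $\mu$ is a $\bfk[\BS_2]$-basis of $V$, these three elements form a $\bfk[\BS_2]$-basis of $\widehat{V}$, and the $\BS_2$-action $\svec{\mu}{\prec}^{(12)}=\svec{\mu'}{\succ}$, $\svec{\mu}{\succ}^{(12)}=\svec{\mu'}{\prec}$, $\svec{\mu}{\cdot}^{(12)}=\svec{\mu'}{\cdot}$ identifies $\widehat{V}$ with the space of generating operations of $\tridend$. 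By definition the relations of $\tsu(\ass)$ are the $\BS_3$-submodule generated by $\tsu_J(\mu\circ_{\rmi}\mu-\mu'\circ_{\rmii}\mu)$ over the seven nonempty $J\subseteq\{x,y,z\}$, and I would compute these seven elements either vertex by vertex via the path rule of Proposition~\ref{tsuccpath} applied to the trees $(xy)z$ and $x(yz)$, or via the formulas for $\tsu_J(\gop\circ_{\rmi}\gopb)$, $\tsu_J(\gop\circ_{\rmii}\gopb)$, $\tsu_J(\gop\circ_{\rmiii}\gopb)$ appearing in the proof of Theorem~\ref{thm:tsupl} together with Lemma~\ref{lem:sacttsu} to pass from the $J$'s containing $x$ to the remaining ones. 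Writing $\star:=\prec+\succ+\cdot$, one obtains the seven relations
\begin{align*}
\tsu_{\{x\}}&:\ (x\prec y)\prec z = x\prec(y\star z), &
\tsu_{\{y\}}&:\ (x\succ y)\prec z = x\succ(y\prec z),\\
\tsu_{\{z\}}&:\ (x\star y)\succ z = x\succ(y\succ z), &
\tsu_{\{x,y\}}&:\ (x\cdot y)\prec z = x\cdot(y\prec z),\\
\tsu_{\{y,z\}}&:\ (x\succ y)\cdot z = x\succ(y\cdot z), &
\tsu_{\{x,z\}}&:\ (x\prec y)\cdot z = x\cdot(y\succ z),\\
\tsu_{\{x,y,z\}}&:\ (x\cdot y)\cdot z = x\cdot(y\cdot z),&&
\end{align*}
which is exactly the system of seven defining axioms of the Loday--Ronco tridendriform algebra. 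Thus $\tsu(\ass)=\tridend$, and combining with Theorem~\ref{thm:tsupl} gives $\postlie\bullet\ass\cong\tridend$.

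The only substantive step is this seven-fold relabeling computation, which is routine bookkeeping with Proposition~\ref{tsuccpath}; the one point that requires a little care is keeping the $\BS_2$-action on $\widehat{V}$ straight, so that $\prec$, $\succ$ and their $(12)$-flips land on the correct tridendriform operations --- precisely the subtlety already handled in the $\tsu(\lie)=\postlie$ computation of Section~\ref{sec:conc}. So I do not expect any genuine obstacle, and indeed one could alternatively just cite the example in Section~\ref{sec:conc} where $\tsu(\ass)=\tridend$ is recorded, making the corollary immediate from Theorem~\ref{thm:tsupl}.
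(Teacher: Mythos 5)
Your argument is correct and is essentially the paper's own route: the corollary is just Theorem~\mref{thm:tsupl} combined with the identification $\tsu(\ass)=\tridend$, which the paper records as an example in Section~\mref{sec:conc} and otherwise leaves unproved. Your seven-fold trisuccessor computation (whose seven relations do match the Loday--Ronco tridendriform axioms) simply fills in that example's verification, so there is nothing to correct.
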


\begin{prop}
The \Tsuc of the operad $\prelie$ is the operad encoding the following algebraic structure:
\begin{eqnarray*}
(x\prec y)\prec z-x\prec(y\star z)&=&(x\prec z)\prec y-x\prec(z\star y),\\
(x\succ y)\prec z-x\succ(y\prec z)&=&(x\star z)\succ y-x\succ(z\succ y),\\
(x\cdot y)\prec z-x\cdot(y\prec z)&=&(x\prec z)\cdot y-x\cdot(z\succ y),\\
(x\succ y)\cdot z-x\succ(y\cdot z)&=&(x\succ z)\cdot y-x\succ(z\cdot y),\\
(x\cdot y)\cdot z-x\cdot(y\cdot z)&=&(x\cdot z)\cdot y-x\cdot(z\cdot y),
\end{eqnarray*}
where $x\star y=x\prec y+x\succ y+x\cdot y.$
It is also the \suc of the operad $\postlie$.\\
\end{prop}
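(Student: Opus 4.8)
The plan is to prove the two assertions of the proposition separately: first that $\tsu(\prelie)$ is the operad presented by the five displayed identities, and then that this operad is isomorphic to $\su(\postlie)$.

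For the first assertion I would argue directly from the definition of the \Tsuc of a binary operad. Write $\prelie = \mathcal{T}(V)/(R)$ with $V = \bfk[\BS_2]$ generated by an operation I will call $\mu$ (as in the proof of Theorem~\mref{thm:supl}), so that, in the notation of Section~\mref{ss:exam}, $R$ is generated as an $\BS_3$-module by the single relation $\gamma := v_1 - v_2 + v_3 - v_4$, which is the right pre-Lie identity $(x\mu y)\mu z - x\mu(y\mu z) - (x\mu z)\mu y + x\mu(z\mu y)$. By definition $\tsu(\prelie) = \mathcal{T}(\widehat{V})/(\tsu(R))$, where $\tsu(R)$ is generated as an $\BS$-module by the seven elements $\tsu_J(\gamma)$, $\emptyset \neq J \subseteq \{x,y,z\}$. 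I would compute each of them from the path description of Proposition~\mref{tsuccpath}: on each of the four labeled trees occurring in $\gamma$, one relabels a vertex by $\svec{\mu}{\prec}$ (resp. $\svec{\mu}{\succ}$) when all paths from the root to the leaves of $J$ turn left (resp. right) at that vertex, by $\svec{\mu}{\cdot}$ when some turn each way, and by $\svec{\mu}{\star}$ when the vertex lies on none of those paths.

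Next I would reduce the seven relations to five. The right pre-Lie identity satisfies $\gamma^{(23)} = -\gamma$, so Lemma~\mref{lem:sacttsu} yields $\tsu_{\{z\}}(\gamma) = -\tsu_{\{y\}}(\gamma)^{(23)}$ and $\tsu_{\{x,z\}}(\gamma) = -\tsu_{\{x,y\}}(\gamma)^{(23)}$; hence $\tsu_{\{z\}}(\gamma)$ and $\tsu_{\{x,z\}}(\gamma)$ already lie in the $\BS$-submodule generated by the other five, namely $\tsu_{\{x\}}(\gamma)$, $\tsu_{\{y\}}(\gamma)$, $\tsu_{\{x,y\}}(\gamma)$, $\tsu_{\{y,z\}}(\gamma)$ and $\tsu_{\{x,y,z\}}(\gamma)$. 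Writing $\prec, \succ, \cdot$ for $\svec{\mu}{\prec}, \svec{\mu}{\succ}, \svec{\mu}{\cdot}$ (so that $\svec{\mu}{\star}$ becomes $\star = \prec + \succ + \cdot$), a line-by-line comparison identifies these five, in that order, with the five displayed identities; for instance $\tsu_{\{x\}}(\gamma)$ gives the first one because the path to the leftmost leaf $x$ turns left at every vertex on it, and $\tsu_{\{x,y,z\}}(\gamma)$ gives the last one because every vertex is then relabeled $\svec{\mu}{\cdot}$, reproducing the pre-Lie identity in the operation $\cdot$.

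For the second assertion I would invoke the Manin-product descriptions already established. Both $\prelie$ and $\postlie$ are binary quadratic operads with finite-dimensional spaces of generators, so Theorem~\mref{thm:supl} applied to $\postlie$ gives $\su(\postlie) \cong \prelie \bullet \postlie$, while Theorem~\mref{thm:tsupl} applied to $\prelie$ gives $\tsu(\prelie) \cong \postlie \bullet \prelie$; since the Manin black product is symmetric, the two right-hand sides agree, whence $\su(\postlie) \cong \tsu(\prelie)$. I expect the only laborious step to be the bookkeeping in the first assertion: because the pre-Lie operation is neither symmetric nor skew-symmetric, its $\BS_2$-twists $\svec{\mu^{(12)}}{\prec} = \svec{\mu}{\succ}^{(12)}$ and so on must be tracked when rewriting terms such as $v_2 = \mu'\circ_{\rmii}\mu$ in algebra notation and when computing the $\tsu_J(\gamma)$; the path description of Proposition~\mref{tsuccpath} makes each of the seven computations mechanical, but the signs must be handled with care. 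Everything else follows formally from results already proved.
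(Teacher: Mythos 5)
Your proposal is correct, and it is worth noting that the paper states this proposition \emph{without} proof, so there is no written argument to compare against line by line; your text supplies exactly the verification the authors leave implicit. I checked the computational half: with $\gamma=v_1-v_2+v_3-v_4$ generating the relations of (right) $\prelie$ as an $\BS_3$-module, the path description of Proposition~\mref{tsuccpath} gives, for instance, $\tsu_{\{x\}}(\gamma)=(x\prec y)\prec z-x\prec(y\star z)+x\prec(z\star y)-(x\prec z)\prec y$ and $\tsu_{\{x,y,z\}}(\gamma)=(x\cdot y)\cdot z-x\cdot(y\cdot z)+x\cdot(z\cdot y)-(x\cdot z)\cdot y$, and indeed the five displayed identities are $\tsu_{\{x\}}(\gamma)$, $\tsu_{\{y\}}(\gamma)$, $\tsu_{\{x,y\}}(\gamma)$, $\tsu_{\{y,z\}}(\gamma)$, $\tsu_{\{x,y,z\}}(\gamma)$ in that order, written with $\prec:=\svec{\mu}{\prec}$, $\succ:=\svec{\mu}{\succ}$, $\cdot:=\svec{\mu}{\cdot}$. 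Your use of $\gamma^{(23)}=-\gamma$ together with Lemma~\mref{lem:sacttsu} to show that $\tsu_{\{z\}}(\gamma)$ and $\tsu_{\{x,z\}}(\gamma)$ lie in the $\BS$-module generated by the other five is valid, and it is a nice touch: it explains why only five identities appear here, whereas seven are listed for $\tsu(\lie)=\postlie$. This half of your argument follows the same computational template the paper uses for its analogous propositions ($\su(\lie)=\prelie$, $\tsu(\lie)=\postlie$, $\su(\prelie)=\ldend$ in the appendix). For the second assertion you argue formally, $\su(\postlie)\cong\prelie\bullet\postlie\cong\postlie\bullet\prelie\cong\tsu(\prelie)$, using Theorems~\mref{thm:supl} and~\mref{thm:tsupl} and the symmetry of the Manin black product; this is legitimate, since both $\prelie$ and $\postlie$ are binary quadratic with finite-dimensional generating spaces, and it spares you a second direct computation of the bisuccessor of the seven-relation presentation of $\postlie$. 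The trade-off is that this route yields only an abstract isomorphism and no explicit dictionary expressing the bisuccessors of the PostLie operations in terms of $\prec$, $\succ$, $\cdot$, which a direct computation would provide; since the proposition asserts only that the presented operad is the bisuccessor of $\postlie$, your proof suffices as written.
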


The analogue of Theorem \ref{ManinprdPostLie} in the nonsymmetric framework is the following result that can be proved by a similar argument.

\begin{theorem}
Let $\calp$ be a binary quadratic nonsymmetric operad. There is an isomorphism of nonsymmetric operads
$$\tsu(\calp) \cong \tridend \ \blacksquare \  \calp \ . $$
\end{theorem}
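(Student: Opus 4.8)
The plan is to reproduce, in the non-symmetric framework, the proof of Theorem~\ref{ManinprdPostLie}, the only simplification being that neither a $\BS_2$-equivariance condition nor the sign representation $\bfk\cdot{\rm sgn}_{\BS_2}$ enters. Write $\calp=\mathcal{T}_{ns}(V)/(R)$ with $V=V_2$ finite-dimensional. By the example in Section~\ref{succnsopd}, $\tridend$ is the \Tsuc of the non-symmetric operad $\mathit{Ass}$, so $\tridend$ admits the presentation $\tridend=\mathcal{T}_{ns}(W)/(S)$, where $W=\bfk\prec\oplus\bfk\succ\oplus\bfk\,\cdot$ is three-dimensional and $S$ is spanned by the seven quadratic relations
\[
s_J:=\tsu_J\bigl(\mu\circ_{\rmi}\mu-\mu\circ_{\rmii}\mu\bigr),\qquad \emptyset\neq J\subseteq\{x,y,z\},
\]
obtained by applying the \Tsuc to the unique associativity relation of $\mathit{Ass}$, namely $\mu\circ_{\rmi}\mu-\mu\circ_{\rmii}\mu$ (that is, $(xy)z-x(yz)$). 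Following the definition of the black square product in~\cite{EG,Va}, $\tridend\,\blacksquare\,\calp=\mathcal{T}_{ns}(W\otimes V)/(\Phi(S\otimes R))$, where $\Phi$ is the canonical pairing on the arity-$3$ components, the non-symmetric analogue of the map $\Psi$ of Definition~\ref{de:black}.

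First I would fix the candidate isomorphism on generating operations,
\[
\eta\colon W\otimes V\longrightarrow\widehat V,\qquad \prec\otimes\gop\mapsto\svec{\gop}{\prec},\quad \succ\otimes\gop\mapsto\svec{\gop}{\succ},\quad \cdot\otimes\gop\mapsto\svec{\gop}{\cdot}\qquad(\gop\in V),
\]
which is merely a linear bijection between two spaces of dimension $3\dim V$ — in the non-symmetric setting there is nothing equivariant to verify. It extends to an isomorphism $\bar\eta$ of the arity-$3$ modules $\mathcal{T}_{ns}(W\otimes V)(3)\cong\mathcal{T}_{ns}(\widehat V)(3)$, each a direct sum of two copies of the appropriate tensor square, corresponding to the two planar shapes $\circ_{\rmi}$ and $\circ_{\rmii}$ (there being no type $\rmiii$ in the non-symmetric world).

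The core step is then the identity
\[
\bar\eta\bigl(\Phi(s_J\otimes\gamma)\bigr)=\tsu_J(\gamma),\qquad \emptyset\neq J\subseteq\{x,y,z\},\ \gamma\in R,
\]
which, by bilinearity, only needs to be verified when $\gamma$ ranges over the two kinds of generators $\gop\circ_{\rmi}\gopb$ and $\gop\circ_{\rmii}\gopb$ of $\mathcal{T}_{ns}(V)(3)$, $\gop,\gopb\in V$. For these I would compute $\tsu_J$ by the relabelling rules of Proposition~\ref{tsuccpath}, producing the non-symmetric analogues of the tables in the proof of Theorem~\ref{ManinprdPostLie} (for instance $\tsu_{\{x\}}(\gop\circ_{\rmi}\gopb)=\svec{\gop}{\prec}\circ_{\rmi}\svec{\gopb}{\prec}$, $\tsu_{\{x,y\}}(\gop\circ_{\rmi}\gopb)=\svec{\gop}{\prec}\circ_{\rmi}\svec{\gopb}{\cdot}$ and $\tsu_{\{x,y,z\}}(\gop\circ_{\rmi}\gopb)=\svec{\gop}{\cdot}\circ_{\rmi}\svec{\gopb}{\cdot}$, with the symmetric statements for $\circ_{\rmii}$). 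On the other side, $\Phi(s_J\otimes(\gop\circ_j\gopb))$ is read off from the explicit black square formula of~\cite{EG,Va}; since $s_J$ is itself the \Tsuc of the associativity relation with respect to $J$, pairing it with $\gop\circ_j\gopb$ and then applying $\bar\eta$ returns precisely the relabelled tree computed above. Matching the two lists over the $2\times7$ cases yields the displayed identity, so $\bar\eta$ carries $\Phi(S\otimes R)$ onto $\tsu(R)$; as $\tridend\,\blacksquare\,\calp$ and $\tsu(\calp)$ are both binary quadratic non-symmetric operads, this promotes $\eta$ to the asserted operad isomorphism $\tsu(\calp)\cong\tridend\,\blacksquare\,\calp$.

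The main obstacle is the one already present in Theorems~\ref{ManinprdpreLie} and~\ref{ManinprdPostLie}: making the pairing $\Phi$ explicit enough to check $\bar\eta\circ\Phi(s_J\otimes-)=\tsu_J(-)$ on every generator. This is pure bookkeeping — fourteen relabellings of two-vertex trees — the only conceptual point being that the three operations $\prec,\succ,\cdot$ of $\tridend$ must be matched with the three ``slots'' of $\widehat V$ compatibly with the way $\Phi$ distributes the associativity relation over the relations of $\calp$. Rather than attempting to reduce to the symmetric Theorem~\ref{ManinprdPostLie} through the regularization functor — which is delicate, since, as noted above, the Manin black product does not commute with $\mathit{Reg}$ — the direct computation is the natural route.
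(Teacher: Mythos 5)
Your proposal is correct and is exactly the argument the paper intends: the paper proves this theorem only by the remark that it ``can be proved by a similar argument'' to Theorem~\ref{ManinprdPostLie}, and you carry out precisely that adaptation — presenting $\tridend=\tsu(\mathit{Ass})$ by the seven relations $\tsu_J$ of associativity, mapping $\prec\otimes\gop,\ \succ\otimes\gop,\ \cdot\otimes\gop$ to $\svec{\gop}{\prec},\ \svec{\gop}{\succ},\ \svec{\gop}{\cdot}$, and checking the pairing identity on the two planar monomial types, just as in the symmetric proof. The only point to keep in sight (at the same level of detail the paper itself glosses over via the map $\Psi$ and the $\mathrm{sgn}_{\BS_2}$ twist) is that the nonsymmetric pairing $\Phi$ of \mcite{EG} carries the usual sign convention on the type-$\rmii$ component, which is what makes your identity $\bar\eta(\Phi(s_J\otimes\gamma))=\tsu_J(\gamma)$ hold monomial by monomial.
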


\section{Algebraic structures on square matrices}\mlabel{sec:matrix}

We know that the vector space of square $n$-matrices, for $n\geq 1$, with coefficients in a commutative algebra carries a structure of an associative algebra. Naturally, one wonders what happens when the space of coefficients is endowed with another algebraic structure. We address this question in this section.

\begin{prop}\label{stalgmatrices}
Let $\calp$ be an operad and let $A$ be a $\calp$-algebra. Then, the vector space $\mathcal{M}_{n}(A)$, for $n \geq 1$, of ($n \times n$)-matrices with coefficients in $A$, carries a canonical $\overline{\calp}$-algebra structure given by the family of maps $\alpha_{m}: \overline{\calp}_{m}\rightarrow \mathrm{Hom}(\mathcal{M}_{n}(A)^{\otimes m}, \mathcal{M}_{n}(A))$ defined by
$$\alpha_{m}(\mu)(M^{1} \otimes \ldots \otimes M^{m})_{i,j}:= \displaystyle{\sum_{k_{1},\ldots,k_{m-1}}^{m}} \alpha_{A}(\mu)(M^{1}_{i,k_{1}},\ldots,M^{m}_{k_{m-1},j})\ , \forall 1\leq i,j \leq n, \forall m \geq 0  \ ,$$
where $\alpha_{A}:\calp \rightarrow \mathrm{End}_{A}$ is the structure of $\calp$-algebra on $A$.
\end{prop}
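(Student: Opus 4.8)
The plan is to verify directly that the family of maps $\alpha_m$ satisfies the axioms of a $\overline{\calp}$-algebra, i.e.\ that $\alpha: \overline{\calp} \to \End_{\mathcal{M}_n(A)}$ is a morphism of nonsymmetric operads. Since $\overline{\calp}$ is obtained from $\calp$ by forgetting the symmetric group action, the only thing to check is compatibility with the operadic composition maps $\circ_i$ (and that the unit of $\calp$ goes to the identity). Concretely, for $\mu \in \overline{\calp}_m$ and $\nu \in \overline{\calp}_p$, one must show
$$\alpha_{m+p-1}(\mu \circ_i \nu) = \alpha_m(\mu) \circ_i \alpha_p(\nu)$$
as maps $\mathcal{M}_n(A)^{\otimes(m+p-1)} \to \mathcal{M}_n(A)$, where on the right-hand side $\circ_i$ is the composition in $\End_{\mathcal{M}_n(A)}$.

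First I would unwind both sides on a general entry. Fix indices $1 \le a, b \le n$ and matrices $M^1, \dots, M^{m+p-1}$. Writing out $\alpha_m(\mu)\big(M^1 \otimes \cdots \otimes (\alpha_p(\nu)(M^i \otimes \cdots \otimes M^{i+p-1})) \otimes \cdots\big)_{a,b}$ using the definition twice produces a double sum over internal index families: one family $k_1, \dots, k_{m-1}$ coming from the outer $\mu$-application, threading through the matrix slots $M^1, \dots, M^{i-1}$, then the block produced by $\nu$, then $M^{i+p}, \dots, M^{m+p-1}$; and a second family $l_1, \dots, l_{p-1}$ coming from the inner $\nu$-application threading through $M^i, \dots, M^{i+p-1}$, with the two families linked so that the block $\nu$ receives as its first argument the entry indexed by $k_{i-1}$ on the left and feeds $k_{i-1}$... — the key bookkeeping point is that the ``boundary'' indices match up exactly. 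Using that $A$ is a $\calp$-algebra, so $\alpha_A(\mu \circ_i \nu) = \alpha_A(\mu) \circ_i \alpha_A(\nu)$ applied to the scalar entries, the inner $\alpha_A(\nu)(\cdots)$ can be absorbed and the combined index family $\{k_j\} \cup \{l_j\}$ is precisely the index family appearing in $\alpha_{m+p-1}(\mu \circ_i \nu)(M^1 \otimes \cdots \otimes M^{m+p-1})_{a,b}$. Unit compatibility is immediate: $\alpha_1(\id)$ has no internal sum and returns the matrix unchanged.

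The main obstacle is purely notational: keeping the two nested families of summation indices, their ranges, and the way they chain through consecutive matrix factors straight enough that the identification of the two index sets is manifestly correct, especially handling the matrix slots before position $i$, the block at position $i$, and the slots after. Once the indices are set up carefully this is a routine (if lengthy) matrix-multiplication-style computation, entirely parallel to the classical proof that $\mathcal{M}_n$ of a commutative (more generally associative) algebra is associative; no subtlety beyond this is expected. I would also remark that the $\BS_m$-action on $\calp(m)$ genuinely does \emph{not} descend to $\mathcal{M}_n(A)$ in general — permuting the matrix arguments permutes the chained index pattern and does not give back the same map — which is exactly why the statement only asserts a $\overline{\calp}$-algebra structure and not a $\calp$-algebra structure; this explains the appearance of the forgetful functor $\overline{(-)}$ and connects with the successor constructions of the previous sections (e.g.\ the regular operad $\mathit{Reg}(\overline{\calp})$).
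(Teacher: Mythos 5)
Your proposal is correct and matches the paper's proof in essence: both verify entrywise, by chaining the matrix indices exactly as in ordinary matrix multiplication and using that $\alpha_{A}$ respects operadic composition, that the maps $\alpha_{m}$ assemble into a morphism of nonsymmetric operads $\overline{\calp}\to\End_{\mathcal{M}_{n}(A)}$. The only difference is presentational: you check compatibility with the partial compositions $\circ_i$ (plus the unit), whereas the paper checks the total composition $\gamma(\mu;\nu_{1},\ldots,\nu_{d})$ in a single computation --- an equivalent and equally routine piece of index bookkeeping.
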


\begin{proof}
We denote $\overline{\alpha}_{m}(\mu)$ by $\overline{\mu}$. Let $\mu\otimes \nu_{1}\otimes \ldots \otimes \nu_{d}$ be in $\overline{\calp}(d)\otimes\overline{\calp}(c_{1})\otimes\ldots\otimes \overline{\calp}(c_{d})$, with $c_{1}+\ldots +c_{d}=m$, and let $M^{1},\ldots,M^{m}$ be in $\mathcal{M}_{n}(A)$. We have
$$ \hspace*{-8cm}\overline{\mu}(\overline{\nu_{1}}(M^{1},\ldots,M^{c_{1}}),\ldots,\overline{\nu_{d}}( M^{.} , \ldots , M^{m}))_{i,j}$$
\begin{eqnarray*}
 & = & \!\!\!\displaystyle{\sum_{k_{1},\ldots,k_{d-1}=1}^{n}} \  \displaystyle{\sum_{l_{1}^{1},\ldots,l_{c_{1}-1}^{1}=1}^{n}} \!\! \ldots \!\! \displaystyle{\sum_{l_{1}^{d},\ldots,l_{c_{d}-1}^{d}=1}^{n}} \!\! \alpha_{A}(\mu)(\alpha_{A}(\nu_{1})(M^{1}_{i,l_{1}^{1}},\ldots,M^{c_{1}}_{l_{c_{1}-1}^{1},k_{1}}),\ldots,\alpha_{A}(\nu_{d})( M^{.}_{k_{d-1},l_{1}^{d}} , \ldots , M^{m}_{l_{c_{d}-1}^{d}}))\\
 & = & \displaystyle{\sum_{k_{1},\ldots,k_{d-1}=1}^{n}} \  \displaystyle{\sum_{l_{1}^{1},\ldots,l_{c_{1}-1}^{1}=1}^{n}} \ldots \displaystyle{\sum_{l_{1}^{d},\ldots,l_{c_{d}-1}^{d}=1}^{n}} \gamma_{\calp}(\mu;\nu_{1},\ldots,\nu_{d})(M^{1}_{i,l_{1}^{1}},\ldots,M^{c_{1}}_{l_{c_{1}-1}^{1},k_{1}},\ldots,M^{.}_{k_{d-1},l_{1}^{d}} , \ldots , M^{m}_{l_{c_{d}-1}^{d}})\\
 & = & \gamma_{\overline{\calp}}(\mu;\nu_{1},\ldots,\nu_{d})(M^{1},\ldots,M^{d})_{i,j} \ , \forall 1\leq i,j \leq n,
\end{eqnarray*}
where $\gamma_{\calp}=\gamma_{\overline{\calp}}$ denotes the composition maps. So, these maps endow $\mathcal{M}_{n}(A)$ with a $\overline{\calp}$-algebra structure.
\end{proof}

Now, we have to describe the operad $\overline{\calp}$. For instance, since $\overline{\comm}=\as$, we recover the classical associative structure of the space of matrices with coefficients in a commutative algebra. Moreover, in \cite{ST09} and in \cite{BergeronLivernet10}, and in \cite{D}, the authors prove respectively that the non-symmetric operads $\overline{\lie}$ and $\overline{\prelie}$ are free. Thus, on the space of matrices with coefficients in a Lie algebra (resp. preLie algebra), there is, in general, no relations among the operations defined in Proposition~\ref{stalgmatrices}.

It is a non-trivial problem to describe the non-symmetric operad $\overline{\calp}$ associated to a sym\-me\-tric operad $\calp$.  However, when $\calp$ turns out to be the \suc of a convenient operad, we have the following result.

\begin{theorem}\label{matrixonsucc}
Let $\calp$ be a non-symmetric binary operad and $\mathcal{O}$ be a symmetric binary operad. And let $A$ be an algebra over $\su^{k}(\mathcal{O})$, for $k\geq 0$. Any morphism from $\mathit{Reg}(\calp)$ to $\mathcal{O}$ induces a morphism of non-symmetric operads
$$ \su^{k}(\calp)\rightarrow \overline{\su^{k}(\mathcal{O})} \ , $$
which endows $\mathcal{M}_{n}(A)$, for $n \geq 1$, with a $\su^{k}(\calp)$-algebra structure.

\end{theorem}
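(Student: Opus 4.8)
The plan is to build the desired morphism of nonsymmetric operads by combining functoriality of the successor construction with the regularization/forgetting adjunction from Section~\ref{sect:suns-s}, and then to invoke Proposition~\ref{stalgmatrices} to obtain the algebra structure on $\mathcal{M}_n(A)$. First I would observe that the \suc is a functor on binary operads (and likewise in the nonsymmetric setting), since it is defined purely in terms of the treewise tensor module structure and the linear relabeling rules of Propositions~\ref{succpath} and~\ref{tsuccpath}, which are manifestly natural; hence any morphism of operads induces a morphism of their \sucs, and by iteration a morphism $\su^k(f)\colon \su^k(\mathcal{Q})\to\su^k(\mathcal{Q}')$ from any $f\colon\mathcal{Q}\to\mathcal{Q}'$. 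Applying this to the given morphism $\mathit{Reg}(\calp)\to\mathcal{O}$ yields a morphism of symmetric operads $\su^k(\mathit{Reg}(\calp))\to\su^k(\mathcal{O})$.

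Next I would rewrite the source using Proposition~\ref{succregop}, which gives $\su(\mathit{Reg}(\calp))\cong\mathit{Reg}(\su(\calp))$; iterating this isomorphism $k$ times produces $\su^k(\mathit{Reg}(\calp))\cong\mathit{Reg}(\su^k(\calp))$. So we have a morphism of symmetric operads $\mathit{Reg}(\su^k(\calp))\to\su^k(\mathcal{O})$. Now I would apply the forgetful functor $\mathcal{Q}\mapsto\overline{\mathcal{Q}}$ to this morphism, obtaining a morphism of nonsymmetric operads $\overline{\mathit{Reg}(\su^k(\calp))}\to\overline{\su^k(\mathcal{O})}$. The point is that $\overline{\mathit{Reg}(\mathcal{R})}$ contains (indeed has as a natural summand / retract) the nonsymmetric operad $\mathcal{R}$ itself: the unit of the $\mathit{Reg}\dashv(\,\overline{\ \cdot\ }\,)$ adjunction gives a canonical morphism of nonsymmetric operads $\su^k(\calp)\to\overline{\mathit{Reg}(\su^k(\calp))}$. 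Composing this unit with the morphism just obtained gives the desired morphism of nonsymmetric operads
$$\su^k(\calp)\ \longrightarrow\ \overline{\mathit{Reg}(\su^k(\calp))}\ \longrightarrow\ \overline{\su^k(\mathcal{O})}\ .$$

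Finally, since $A$ is an algebra over $\su^k(\mathcal{O})$, Proposition~\ref{stalgmatrices} endows $\mathcal{M}_n(A)$ with a canonical $\overline{\su^k(\mathcal{O})}$-algebra structure; restricting this structure along the morphism $\su^k(\calp)\to\overline{\su^k(\mathcal{O})}$ constructed above makes $\mathcal{M}_n(A)$ into a $\su^k(\calp)$-algebra, as claimed. The step I expect to require the most care is the second one: checking that the functoriality of $\su$ is compatible with the regularization isomorphism of Proposition~\ref{succregop} in the precise sense needed — that is, that the isomorphism $\su^k(\mathit{Reg}(\calp))\cong\mathit{Reg}(\su^k(\calp))$ is natural in $\calp$, so that it intertwines $\su^k$ applied to $\mathit{Reg}(\calp)\to\mathcal{O}$ with the adjunction unit for $\su^k(\calp)$. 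This is essentially bookkeeping with the explicit bases (Lemma~\ref{basisfreeop}) and the relabeling description of the successors, but it is where one must be precise rather than hand-wave; everything else follows formally from adjunction and functoriality.
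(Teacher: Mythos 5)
Your proposal is correct and follows essentially the same route as the paper: functoriality of the \suc applied to $\mathit{Reg}(\calp)\to\mathcal{O}$, the isomorphism $\su^{k}(\mathit{Reg}(\calp))\cong\mathit{Reg}(\su^{k}(\calp))$ from Proposition~\ref{succregop}, the unit of the regularization/forgetful adjunction, and then Proposition~\ref{stalgmatrices} to restrict the $\overline{\su^{k}(\mathcal{O})}$-structure on $\mathcal{M}_{n}(A)$ along the composite $\su^{k}(\calp)\to\overline{\mathit{Reg}(\su^{k}(\calp))}\cong\overline{\su^{k}(\mathit{Reg}(\calp))}\to\overline{\su^{k}(\mathcal{O})}$. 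Your extra attention to the naturality of the isomorphism in Proposition~\ref{succregop} is a reasonable precaution but does not change the argument, which matches the paper's proof.
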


\begin{proof}
Let $A$ be an algebra over $\su^{k}(\mathcal{O})$. By Proposition~\ref{stalgmatrices}, $\mathcal{M}_{n}(A)$ carries a structure of an algebra over $\overline{\su^{k}(\mathcal{O})}$. By functoriality of the \suc, a morphism from $\mathit{Reg}(\calp)$ to $\mathcal{O}$ gives rise to a morphism from $\su^{k}(\mathit{Reg}(\calp))$ to $\su^{k}(\mathcal{O})$. Then, the following composite induces a $\su^{k}(\calp)$-algebra structure on  $\mathcal{M}_{n}(A)$:
$$ \su^{k}(\calp) \rightarrow \overline{\mathit{Reg}(\su^{k}(\calp))} \cong \overline{\su^{k}(\mathit{Reg}(\calp))} \rightarrow \overline{\su^{k}(\mathcal{O})}  \ , $$
where the left hand-side map is given by the unit of the adjunction between the forgetful and the regularization functors and where the isomorphism is a consequence of Proposition~\ref{succregop}.
\end{proof}

\begin{coro}
Let $A$ be an algebra over $\su^{k}(\comm)$, $k\geq 0$. Then $\mathcal{M}_{n}(A)$, $n\geq 1$, carries a functorial structure of algebra over $\dend^{\blacksquare k}$.

More precisely, this structure is given by the following generating operations
$$ \ast_{(i_{1},\ldots,i_{k})}: \mathcal{M}_{n}(A)\otimes\mathcal{M}_{n}(A) \rightarrow \mathcal{M}_{n}(A) \ , $$
with $(i_{1},\ldots,i_{k}) \in \{ 0, 1 \}^{k}$, defined by
$$ (M\ast_{(i_{1},\ldots,i_{k})} N)_{i,j}:= \displaystyle{\sum_{l=1}^{n}} M_{i,l} \star_{(i_{1},\ldots,i_{k})} N_{l,j} \ ,$$
where \{$\star_{(i_{1},\ldots,i_{k})}\}_{(i_{1},\ldots,i_{k}) \in \{ 0, 1 \}^{k}}$ denotes the set of generating operations of $\su^{k}(\comm)$.

In particular, these operations satisfy
$$ {}^{t}(M\ast_{(i_{1},\ldots,i_{k})}N)= {}^{t}N \ast_{(1-i_{1},\ldots,1-i_{k})} {}^{t}M \ ,\ \  \forall (i_{1},\ldots,i_{k}) \in \{ 0, 1 \}^{k} , \forall M, N \in \mathcal{M}_{n}(A) \ .$$
\end{coro}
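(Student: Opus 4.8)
The plan is to specialise Theorem~\ref{matrixonsucc} to the non-symmetric binary operad $\calp = \as$ (encoding associative algebras) and the symmetric binary operad $\mathcal{O} = \comm$, feeding in the canonical surjection $\mathit{Reg}(\as) \twoheadrightarrow \comm$ from the symmetric associative operad $\mathit{Reg}(\as)$ onto $\comm$. By Example~\ref{ex:dialg} we have $\su(\as) \cong \dend$, so iterating Theorem~\ref{thm:suplns} and using associativity of $\blacksquare$ gives $\su^{k}(\as) \cong \dend \blacksquare \su^{k-1}(\as) \cong \cdots \cong \dend^{\blacksquare k}$ for $k \geq 1$ (and $\su^{0}(\as) = \as$, which is the $k=0$ case of $\dend^{\blacksquare k}$). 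Hence, whenever $A$ is an algebra over $\su^{k}(\comm)$, Theorem~\ref{matrixonsucc} produces a morphism of non-symmetric operads $\dend^{\blacksquare k} \cong \su^{k}(\as) \to \overline{\su^{k}(\comm)}$, and combining it with the $\overline{\su^{k}(\comm)}$-algebra structure on $\mathcal{M}_{n}(A)$ from Proposition~\ref{stalgmatrices} endows $\mathcal{M}_{n}(A)$ with a $\dend^{\blacksquare k}$-algebra structure. Functoriality in $A$ is immediate: this operad morphism is canonical, and by the formula in Proposition~\ref{stalgmatrices} a morphism of $\su^{k}(\comm)$-algebras $A \to B$ induces a morphism of $\overline{\su^{k}(\comm)}$-algebras $\mathcal{M}_{n}(A) \to \mathcal{M}_{n}(B)$, hence of $\dend^{\blacksquare k}$-algebras.

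Next I would make the generating operations explicit. Iterating the assignment $V \mapsto \widetilde{V} = V \otimes (\bfk\prec \oplus \bfk\succ)$ from the one-dimensional space $\comm(2)$ shows that $\su^{k}(\comm)$ is generated by $2^{k}$ binary operations, one for each choice of $\prec$ or $\succ$ at the $k$ successive levels; write $(i_{1},\ldots,i_{k}) \mapsto \star_{(i_{1},\ldots,i_{k})}$ for the corresponding bijection with $\{0,1\}^{k}$ ($0 \leftrightarrow \prec$, $1 \leftrightarrow \succ$). The surjection $\mathit{Reg}(\as) \to \comm$ carries the associative generator to the commutative one, so its $k$-fold successor, read through the isomorphism of Proposition~\ref{succregop} and the adjunction unit appearing in the proof of Theorem~\ref{matrixonsucc}, sends the generator of $\dend^{\blacksquare k} = \su^{k}(\as)$ with index $(i_{1},\ldots,i_{k})$ to $\star_{(i_{1},\ldots,i_{k})}$. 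Plugging this into the arity-$2$ instance of the formula of Proposition~\ref{stalgmatrices} yields exactly
$$ (M \ast_{(i_{1},\ldots,i_{k})} N)_{i,j} = \sum_{l=1}^{n} M_{i,l} \star_{(i_{1},\ldots,i_{k})} N_{l,j} \ , \qquad 1 \leq i,j \leq n \ , $$
and these $2^{k}$ operations determine the whole $\dend^{\blacksquare k}$-algebra structure, being the images of the generating operations of $\dend^{\blacksquare k}$.

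For the transpose identity I would propagate the $\BS_{2}$-action through the iterated successor. The defining action on $\widetilde{V}$ is $\svec{\gop}{\prec}^{(12)} = \svec{\gop^{(12)}}{\succ}$ and $\svec{\gop}{\succ}^{(12)} = \svec{\gop^{(12)}}{\prec}$; as $\comm(2)$ carries the trivial $\BS_{2}$-action, $(12)$ swaps $\prec$ and $\succ$ at the first level, and an induction on $k$ shows that $(12)$ sends $\star_{(i_{1},\ldots,i_{k})}$ to $\star_{(1-i_{1},\ldots,1-i_{k})}$. Consequently, in any $\su^{k}(\comm)$-algebra $A$ one has $a \star_{(i_{1},\ldots,i_{k})} b = b \star_{(1-i_{1},\ldots,1-i_{k})} a$, and therefore, for $M, N \in \mathcal{M}_{n}(A)$,
$$ \bigl({}^{t}(M \ast_{(i_{1},\ldots,i_{k})} N)\bigr)_{i,j} = \sum_{l=1}^{n} M_{j,l} \star_{(i_{1},\ldots,i_{k})} N_{l,i} = \sum_{l=1}^{n} ({}^{t}N)_{i,l} \star_{(1-i_{1},\ldots,1-i_{k})} ({}^{t}M)_{l,j} = \bigl({}^{t}N \ast_{(1-i_{1},\ldots,1-i_{k})} {}^{t}M\bigr)_{i,j} \ . $$

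The main difficulty here is not conceptual but a matter of bookkeeping: one must fix a single labelling of the $2^{k}$ generators by $\{0,1\}^{k}$ that is simultaneously compatible with the recursion $\su^{k} = \su \circ \su^{k-1}$, with the operad morphism induced by $\mathit{Reg}(\as) \to \comm$ through Proposition~\ref{succregop}, and with the $\BS_{2}$-action used in the transpose formula. Once such a convention is fixed, each of the three steps above is a routine check built on Theorem~\ref{matrixonsucc} and Proposition~\ref{stalgmatrices}.
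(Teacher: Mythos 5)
Your proposal is correct and follows essentially the same route as the paper: apply Theorem~\ref{matrixonsucc} with $\calp=\as$, $\mathcal{O}=\comm$ and the canonical morphism $\mathit{Reg}(\as)\to\comm$, identify $\su^{k}(\as)\cong\dend^{\blacksquare k}$ via Theorem~\ref{thm:suplns}, read off the generating operations through Proposition~\ref{stalgmatrices}, and deduce the transpose identity from $\star_{(i_{1},\ldots,i_{k})}^{(12)}=\star_{(1-i_{1},\ldots,1-i_{k})}$. Your write-up is only slightly more explicit (induction for the $\BS_{2}$-action and the entrywise transpose computation) than the paper's proof.
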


\begin{proof}
Applying Theorem \ref{matrixonsucc}, since $\overline{\comm}=\as$, $\mathcal{M}_{n}(A)$ carries a structure of algebra over $\su^{k}(\as)$, which is isomorphic to $\dend^{\blacksquare k}\blacksquare \ \as = \dend^{\blacksquare k}$, by  Theorem \ref{thm:suplns}.

We denote by $\star$ and $\ast$ the generating operation of the operad $\comm$ and $\as$ respectively. Then, the space of generating operations of $\su^{k}(\comm)$ and of $\su^{k}(\as)$ are respectively spanned by
$$\star_{(i_{1},\ldots,i_{k})}:= \star\otimes\mu_{1}\otimes\ldots\otimes\mu_{k}$$
and by
$$\ast_{(i_{1},\ldots,i_{k})}:=\ast\otimes\mu_{1}\otimes\ldots\otimes\mu_{k} \ , $$
with $i_{l}=0$ if $\mu_{j}=\prec$ and $i_{l}=1$ if $\mu_{j}=\succ$. When we make explicit the composite of the maps given in Proposition \ref{stalgmatrices} and in the proof of Theorem \ref{matrixonsucc} on the space of generating operations, we have
$$\begin{array}{rcll}
\su^{k}(\as)_{2} & \rightarrow & \mathrm{Hom}(\mathcal{M}_{n}(A)^{\otimes 2},\mathcal{M}_{n}(A)) & \\
\ast_{(i_{1},\ldots,i_{k})} & \mapsto & \ast_{(i_{1},\ldots,i_{k})} : M\otimes N \mapsto \left( \displaystyle{\sum_{l=1}^{n}} M_{i,l} \star_{(i_{1},\ldots,i_{k})} N_{l,j}  \right)_{1\leq i,j \leq n} & .
\end{array}$$
The last result is a consequence of the $\BS_{2}$-action on the space of generating operations of the operad \mbox{$\su^{k}(\comm)$}, that is
$$ \star_{(i_{1},\ldots,i_{k})}^{(12)}=\star_{(1-i_{1},\ldots,1-i_{k})}\ . $$
\end{proof}

Notice that for $k=1$, according to Proposition \ref{pp:zinb}, the space of matrices with coefficients in an Zinbiel algebra $(A,\centerdot)$ carries a natural structure of dendriform algebra given by the following operations
$$ M \vartriangleleft N= \left( \displaystyle{\sum_{l=1}^{n}} M_{i,l} \centerdot N_{l,j}  \right)_{1\leq i,j \leq n} \ $$
and
$$ M \vartriangleright N= \left( \displaystyle{\sum_{l=1}^{n}} N_{l,j} \centerdot M_{i,l} \right)_{1\leq i,j \leq n} \ .$$
Further, these operations satisfy
$$ {}^{t}(M \vartriangleleft N)={}^{t}N \vartriangleright {}^{t}M \ .$$

It would be interesting to add the transpose to the generating operations of $\dend^{\blacksquare k}$ and to study this operad.

\section{\Sucs, \Tsucs and Rota-Baxter operators on operads}
\mlabel{sec:rb}

In this section we establish the relationship between the \suc (resp. the \Tsuc) of an operad and the action of the Rota-Baxter operator of weight zero (resp. non-zero weight) on this operad. We work with (symmetric) operads, but all the results hold for nonsymmetric operads as well.

\subsection{\Sucs and Rota-Baxter operators of weight zero}

\begin{defn}
{\rm Let $\gensp=\gensp(2)$ be an $\BS$-module concentrated in arity 2.
\begin{enumerate}
\item
 Let $\bvp$ be the $\BS$-module concentrated in arity 1 and arity 2, defined by $\bvp(1)={\rm span}_{\bfk}( P)$ and $\bvp(2)=V$, where $P$ is a symbol. Then $\mathcal{T}(\bvp)$ is the free operad generated by binary operations $\gensp$ and a unary operation $P\neq \id$.
\item
Define $\widetilde{\gensp}$ by Eq.~(\mref{eq:tsp}), regarded as an $\BS$-module concentrated in arity 2. Define a morphism of $\BS$-modules from $\widetilde{V}$ to $\mathcal{T}(\bvp)$ by the following correspondence:
$$\xi:\quad \svec{\gop}{\prec}\mapsto \gop\circ({\rm id}\otimes P),\quad \svec{\gop}{\succ}\mapsto \gop\circ (P\otimes {\rm id}),$$
where $\circ$ is the operadic composition. By universality of the free operad, $\xi$ induces a homomorphism of operads that we still denote by $\xi$:
$$\xi:\mathcal{T}(\widetilde{V})\to \mathcal{T}(\bvp).$$
\item
Let $\calp=\mathcal{T}(\gensp)/(R_\calp)$ be a binary operad defined by generating operations $\gensp$ and
relations $R_\calp$.
Then
we define the {\bf operad of Rota-Baxter $\calp$-algebra of weight zero}
by
$$\rb_0(\calp):=\mathcal{T}(\bvp)/\left( R_\calp,RB_{\calp}\right),$$
where
$$RB_{\mathcal{P}}:=\{\gop\circ(P\otimes P)-P\circ\gop\circ(P\otimes {\rm id})-P\circ\gop\circ({\rm id}\otimes P)\ |\ \gop\in \gensp\}.$$
We denote by $p_1:\mathcal{T}(\bvp)\to
\rb_0(\calp)$ the operadic projection.
\end{enumerate}
}
\mlabel{def:rb0}
\end{defn}

Interpreting Theorem 4.2 of \cite{Uc2} at the level of operads, for any binary quadratic operad $$\calp=\mathcal{T}(\gensp)/(R) \ ,$$ there is a morphism of operads $$ \prelie \bullet \calp \rightarrow \rb_{0}(\calp) \ , $$
defined by the following map
$$\begin{array}{rclc}
\prelie(2) \otimes \calp (2) & \rightarrow & \rb_{0}(\calp) & \\
\mu \otimes \gop  & \mapsto & \gop\circ({\rm id}\otimes P) & \\
\mu' \otimes \gop  & \mapsto & \gop\circ (P\otimes {\rm id})& ,
\end{array}$$
where $\mu$ denotes the generating operation of the operad $\prelie$.
By Theorem \ref{ManinprdpreLie}, this induces the following morphism of operads
$$\begin{array}{rclc}
\su(\calp) & \rightarrow & \rb_{0}(\calp) & \\
\svec{\gop}{\prec} & \mapsto & \gop\circ({\rm id}\otimes P) & \\
\svec{\gop}{\succ} & \mapsto & \gop\circ (P\otimes {\rm id}) & .
\end{array}  $$
If we take $\calp$ to be the operad of associative algebras or the operad of Poisson algebras then we obtain the following results of Aguiar~\mcite{Ag2}:
\begin{coro}
\begin{enumerate}
\item Let $(A,\circ)$ be an associative algebra and let $P:A\to A$
be  a Rota-Baxter operator of weight zero. Define two bilinear products on $A$ by
$$x\prec y:=x\circ P(y),\quad x\succ y:=P(x)\circ y,\quad x,y\in A.$$
Then $(A,\prec,\succ)$ becomes a dendriform dialgebra. \item Let
$(A,\circ,\{\;,\;\})$ be a Poisson algebra and let $P:A\to A$ be a
Rota-Baxter operator of weight zero.
Define two bilinear products on $A$ by
$$x\cdot y:=P(x)\circ y,\quad x\ast y:=x\circ P(y),\quad x,y\in A.$$
Then $(A,\cdot,\ast)$ becomes a pre-Poisson algebra.
\end{enumerate}
\end{coro}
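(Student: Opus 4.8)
The plan is to deduce both items by specializing, to $\calp=\ass$ and to $\calp=\mathit{Poisson}$, the morphism of operads
$$\su(\calp)\longrightarrow\rb_0(\calp),\qquad \svec{\gop}{\prec}\longmapsto\gop\circ(\id\otimes P),\quad \svec{\gop}{\succ}\longmapsto\gop\circ(P\otimes\id),$$
that was constructed just above from Theorem~\mref{thm:supl} and the operadic interpretation of Theorem~4.2 of~\cite{Uc2}. The starting point is the observation that, by Definition~\mref{def:rb0}, an algebra over $\rb_0(\calp)$ is precisely a $\calp$-algebra equipped with a linear operator $P$ satisfying the weight-zero Rota--Baxter relation $\gop(P(x),P(y))=P\bigl(\gop(P(x),y)\bigr)+P\bigl(\gop(x,P(y))\bigr)$ for every generating operation $\gop$ of $\calp$; this is exactly the data supplied in the hypotheses of (a) and of (b).

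For item (a), I would restrict the $\rb_0(\ass)$-algebra structure on $(A,\circ,P)$ along the displayed morphism to obtain a $\su(\ass)$-algebra structure on $A$, which is a dendriform dialgebra because $\su(\ass)=\dend$ by Example~\mref{ex:dialg}. Since that example identifies the generators $\prec,\succ$ of $\dend$ with $\svec{\circ}{\prec},\svec{\circ}{\succ}$ (here $\circ$ is the associative generator), the two induced products on $A$ are the images of these generators under the morphism, i.e.\ $x\prec y=x\circ P(y)$ and $x\succ y=P(x)\circ y$, as claimed. Item (b) follows from the same argument with $\calp=\mathit{Poisson}$: restriction along $\su(\mathit{Poisson})\to\rb_0(\mathit{Poisson})$ endows $(A,\circ,\{\,,\,\},P)$ with an algebra structure over $\su(\mathit{Poisson})=\mathit{PrePoisson}$ (Proposition~\mref{pp:prepois}), and the induced Zinbiel and pre-Lie products are once more the images of the corresponding generators.

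The only step requiring real attention, and the one I would check most carefully, is the bookkeeping in (b): one must verify that the two generators named $\ast$ and $\cdot$ in the pre-Poisson axioms correspond, under the relabelings fixed in Propositions~\mref{pp:zinb} and~\mref{pp:prelie}, to the appropriate split components of the commutative product $\circ$ and of the bracket $\{\,,\,\}$ of $\mathit{Poisson}$, matching the left/right conventions of the statement. Apart from this identification of operations, the argument is a purely formal transport of structure along an operad morphism, with no computation. Alternatively, both items can be checked directly by inserting the definitions of the new products into the dendriform, respectively pre-Poisson, axioms and simplifying with the weight-zero Rota--Baxter relation; this recovers the original computations of Aguiar~\mcite{Ag2} but is longer and is unnecessary given Theorem~\mref{thm:supl}.
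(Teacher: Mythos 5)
Your proposal is correct and is essentially the paper's own argument: the paper presents this corollary as an immediate specialization of the morphism $\su(\calp)\to\rb_0(\calp)$ (obtained from Theorem~\mref{thm:supl} together with the operadic reading of Theorem~4.2 of Uchino) to $\calp=\ass$, where $\su(\ass)=\dend$, and to $\calp=\mathit{Poisson}$, where $\su(\mathit{Poisson})=\mathit{PrePoisson}$, with no further computation. Your closing remark about tracking which split components of $\circ$ and of $\{\,,\,\}$ become $\ast$ and $\cdot$ in item (b) is precisely the only bookkeeping the paper leaves implicit.
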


\subsection{\TSucs and Rota-Baxter operators of non-zero weight}
\mlabel{ss:rbts}
In this section, we es\-ta\-blish a relationship between the \Tsuc of an operad and Rota-Baxter operators of a non-zero weight on this operad. For simplicity, we assume that the weight of the Rota-Baxter operator is one.

\begin{defn}
{\rm Let $\gensp=\gensp(2)$ be an $\BS$-module concentrated in arity 2.
\begin{enumerate}
\item
Define $\widehat{\gensp}$ by Eq.~(\mref{eq:ttsp}), seen as an $\BS$-module concentrated in arity 2. Define a morphism of $\BS$-modules from $\widehat{V}$ to $\mathcal{T}(\bvp)$ by the following correspondence:
$$\eta:\quad \svec{\gop}{\prec}\mapsto \gop\circ({\rm id}\otimes P),\quad \svec{\gop}{\succ}\mapsto \gop\circ (P\otimes {\rm id}),\quad \svec{\gop}{\cdot}\mapsto \gop,$$
where $\circ$ is the operadic composition. By universality of the free operad, $\eta$ induces a homomorphism of operads:
$$\eta:\mathcal{T}(\widehat{V})\to \mathcal{T}(\bvp).$$
\item
Let $\calp=\mathcal{T}(\gensp)/(R_\calp)$ be a binary operad defined by generating operations $\gensp$ and
relations $R_\calp$.
Then
we define the {\bf operad of Rota-Baxter $\calp$-algebra of weight one}
by
$$\rb_1(\calp):=\mathcal{T}(\bvp)/\left( R_\calp,RB_{\calp}\right),$$
where
$$RB_{\mathcal{P}}:=\{\gop\circ(P\otimes P)-P\circ\gop\circ(P\otimes {\rm id})-P\circ\gop\circ({\rm id}\otimes P)-P\circ\gop\ |\ \gop\in \gensp\}.$$
We denote by $p_1:\mathcal{T}(\bvp)\to
\rb_1(\calp)$ the operadic projection.
\end{enumerate}
}
\mlabel{def:rb1}
\end{defn}

\begin{theorem}\mlabel{thm:tros}
Let $\mathcal{P}$ be a binary quadratic operad.
\begin{enumerate}
\item There is a morphism of operads
$$
\postlie\bullet \mathcal{P} \cong \tsu(\calp)  \rightarrow \rb_1(\calp) \ ,
$$
which extends the map $\eta$ given in Definition \mref{def:rb1}.
\item Let $A$ be a $\mathcal{P}$-algebra. Let $P:A\to A$ be a Rota-Baxter operator of weight one. Then the following operations make $A$ into a $(\postlie\bullet\mathcal{P})$-algebra:
$$x\prec_jy:=x\circ_jP(y),\quad x\succ_jy:=P(x)\circ_jy,\quad x\cdot_jy:=x\circ_jy,\quad \forall \circ_j\in\mathcal{P}(2),\quad x,y\in A.$$
\end{enumerate}
\end{theorem}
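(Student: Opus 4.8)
The plan is to reproduce, by hand, the mechanism used in the weight-zero discussion just before Definition~\mref{def:rb1}. Write $\calp=\mathcal{T}(\gensp)/(R_\calp)$ with $R_\calp$ spanned by locally homogeneous elements $r_s=\sum_i c_{s,i}\tau_{s,i}$. By Theorem~\ref{ManinprdPostLie} we may replace $\postlie\bullet\calp$ by $\tsu(\calp)=\mathcal{T}(\widehat{\gensp})/(\tsu(R_\calp))$, so item~(1) reduces to constructing a morphism of operads $\tsu(\calp)\to\rb_1(\calp)$ extending the map $\eta$ of Definition~\mref{def:rb1}. Since $\eta\colon\mathcal{T}(\widehat{\gensp})\to\mathcal{T}(\bvp)$ is already a morphism of operads and $p_1\colon\mathcal{T}(\bvp)\to\rb_1(\calp)$ is the canonical projection, it suffices to check that $p_1\circ\eta$ annihilates the operadic ideal $(\tsu(R_\calp))$; equivalently, that $p_1\big(\eta(\tsu_J(r_s))\big)=0$ in $\rb_1(\calp)$ for every $s$ and every nonempty $J\subseteq\lin(\tau_{s,i})$ (recall that $\lin(\tau_{s,i})$ does not depend on $i$).

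The heart of the argument is a pair of congruences, proved together by induction on the number $n$ of leaves along the grafting decomposition $\tau=\tau_\ell\vee_\gop\tau_r$, inside $\mathcal{T}(\bvp)$ modulo the operadic ideal $(RB_\calp)$ generated by the weight-one Rota-Baxter relations of Definition~\mref{def:rb1}: for every labeled planar binary $n$-tree $\tau\in\calt(\genbas)$ and every nonempty $J\subseteq\lin(\tau)$,
$$ P\circ\eta(\widehat{\tau})\ \equiv\ \tau[P\,|\,\lin(\tau)] \qquad\text{and}\qquad \eta(\tsu_J(\tau))\ \equiv\ \tau[P\,|\,\lin(\tau)\setminus J]\ , $$
where $\tau[P\,|\,S]\in\mathcal{T}(\bvp)$ denotes the element obtained from $\tau$ by regarding its vertex labels as the binary generators in $\gensp\subseteq\bvp$ and pre-composing the unary operation $P$ at each leaf in $S$. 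For the first congruence I would apply $\eta$ to $\widehat{\tau}=\widehat{\tau}_\ell\vee_{\svec{\gop}{\star}}\widehat{\tau}_r$, use $\eta\big(\svec{\gop}{\star}\big)=\gop\circ(\id\ot P)+\gop\circ(P\ot\id)+\gop$, and then apply the weight-one relation to rewrite the resulting $P\circ(\cdots)$ as $\gop\circ(P\ot P)$ evaluated on $\eta(\widehat{\tau}_\ell)$ and $\eta(\widehat{\tau}_r)$, at which point the induction hypothesis closes the case; the base case $\tau=\vcenter{\xymatrix@M=4pt@R=8pt@C=4pt{\ar@{-}[d]\\ \\}}$ just reads $P\circ\id=P$. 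For the second congruence I would split into the three cases $J\subseteq\lin(\tau_\ell)$, $J\subseteq\lin(\tau_r)$, and ``otherwise'', exactly as in the recursion defining $\tsu_J$: in the first two cases $\gop$ is relabeled by $\svec{\gop}{\prec}$ (resp.\ $\svec{\gop}{\succ}$), whose $\eta$-image $\gop\circ(\id\ot P)$ (resp.\ $\gop\circ(P\ot\id)$) supplies the cap $P$ on the subtree appearing as $\widehat{\tau}_r$ (resp.\ $\widehat{\tau}_\ell$), and the first congruence moves that $P$ onto all of its leaves; in the last case $\gop$ is relabeled by $\svec{\gop}{\cdot}$, with $\eta$-image $\gop$, and both subtrees are treated by the induction hypothesis for the second congruence. (Equivalently, one may phrase the whole induction by evaluating in an arbitrary $\calp$-algebra equipped with a weight-one Rota-Baxter operator, where the same identity expresses $\eta(\tsu_J(\tau))$ as $\tau$ evaluated with $P$ inserted at the inputs outside $J$.)

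Granting these congruences, item~(1) follows quickly: for $r_s=\sum_i c_{s,i}\tau_{s,i}$ homogeneous of arity $n_s$ and any nonempty $J\subseteq\{1,\dots,n_s\}$, one gets $\eta(\tsu_J(r_s))\equiv\sum_i c_{s,i}\,\tau_{s,i}[P\,|\,\{1,\dots,n_s\}\setminus J]\pmod{(RB_\calp)}$, and the right-hand side is the image in $\mathcal{T}(\bvp)$ of $r_s$ pre-composed with copies of $P$ and of $\id$, hence it lies in the operadic ideal $(R_\calp)$; therefore $\eta(\tsu_J(r_s))\in(R_\calp,RB_\calp)=\ker p_1$, as required, and combined with Theorem~\ref{ManinprdPostLie} this gives the morphism $\postlie\bullet\calp\cong\tsu(\calp)\to\rb_1(\calp)$ extending $\eta$. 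For item~(2) I would just note that a $\calp$-algebra $A$ with a weight-one Rota-Baxter operator $P$ is, by Definition~\mref{def:rb1}, exactly an $\rb_1(\calp)$-algebra; composing its structure morphism $\rb_1(\calp)\to\End_A$ with the morphism of item~(1) equips $A$ with a $(\postlie\bullet\calp)$-algebra structure, and tracking the generators $\svec{\gop}{\prec},\svec{\gop}{\succ},\svec{\gop}{\cdot}$ through $\eta$ yields precisely $x\prec_j y=x\circ_j P(y)$, $x\succ_j y=P(x)\circ_j y$ and $x\cdot_j y=x\circ_j y$ for each $\circ_j\in\calp(2)$. The one nonformal ingredient is the pair of congruences above: this is the only place where the exact shape of the weight-one Rota-Baxter relation is used, and keeping track of the caps and of the three cases of the $\tsu_J$ recursion is where care is needed; the remainder is formal and parallels the weight-zero paragraph before Definition~\mref{def:rb1} and the proof of Theorem~\ref{ManinprdPostLie}.
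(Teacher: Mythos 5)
Your proposal is correct and follows essentially the same route as the paper: your two congruences are exactly the paper's technical identities $P\circ\eta(\widehat{\tau})\equiv\tau\circ P^{\otimes n}$ and $\eta(\tsu_J(\tau))\equiv\tau\circ P^{\otimes n,J}$ modulo the Rota--Baxter ideal, proved by the same induction along the grafting decomposition, and the deduction that $\eta(\tsu_J(r_s))$ lands in $(R_\calp,RB_\calp)$ together with Theorem~\ref{ManinprdPostLie} and the algebra-level interpretation for item (2) matches the paper's argument. Your write-up even supplies the case analysis of the induction that the paper leaves implicit (and correctly uses $\widehat{\tau}$ where the paper's displayed congruence has a stray $\widetilde{\tau}$), so nothing further is needed.
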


\begin{proof}
\begin{enumerate}
\item First, we prove by induction on $|\lin(\tau)|\geq 1$ the following technical results hold for any $\tau\in \calt(\gensp)$ with $\lin(\tau)=n$:
\begin{itemize}
\item[(i)] We have
\begin{equation}
P\circ \eta(\widetilde{\tau}) \equiv \tau\circ P^{\ot n} \mod \left( R_\calp, RB_\calp\right).
\mlabel{eq:tpxi}
\end{equation}
\mlabel{it:tpxi}
\item[(ii)] For $\emptyset \neq J\subseteq \lin(\tau)$ with $|\lin(\tau)|=n$, let $P^{\ot n,J}$ denote the $n$-th tensor power of $P$ but with the component from $J$ replaced by the identity map. So, for example, denoting the two inputs of $P^{\ot 2}$ by $x_1$ and $x_2$, then $P^{\ot 2, \{x_1\}}=P\ot {\rm id}$ and $P^{\ot 2, \{x_1,x_2\}}={\rm id}\ot {\rm id}$. Then we have
\begin{equation}
\eta(\tsu_J(\tau)) \equiv \tau \circ (P^{\ot n, J}) \mod \left( R_\calp, RB_\calp\right)\,.
\mlabel{eq:txisu}
\end{equation}
\mlabel{it:txisu}
\end{itemize}
Let $R_{\tsu(\calp)}$ be the relation space of\! $\tsu(\calp)$. By definition, the relations of $\tsu(\calp)$ are generated by $\tsu_J(r)$ for locally homogeneous $r=\sum_i c_i \tau_i \in R_\calp$, where $\emptyset\neq J\subseteq \lin (\tau_i)$, the latter independent of the choice of $i$.
By the aforementioned results in Eqs.~(\mref{eq:tpxi}) and (\mref{eq:txisu}), we have
$$ \eta\left (\sum_{i} c_i \tsu_J(\tau_i)\right) = \sum_i c_i \eta(\tsu_J(\tau_i)) = \sum_i c_i \tau_i \circ P^{\ot n,J}
= \left (\sum_i c_i \tau_i \right) \circ P^{\ot n, J} = 0 \mod \left( R_\calp, RB_\calp \right).$$
Hence $\eta(R_{\tsu(\calp)})\subseteq \left( R_\calp, RB_\calp\right)$ and $\eta$ induces a morphism of operads $$\bar{\eta}: \tsu(\calp) \rightarrow \rb_1(\calp) \ . $$
\item It is the interpretation at the level of algebras of the morphism $$\postlie\bullet \mathcal{P}  \rightarrow \rb_1(\calp) \ . $$
\end{enumerate}
\end{proof}

If we take $\calp$ to be the operad $\ass$, resp. the operad $\dend$, then we derive the results~\mcite{E, EG} that a Rota-Baxter operator on an associative algebra (resp. on a dendriform algebra) gives a tridendriform algebra by Corollary \mref{tsucass} (resp. an algebra over the operad $\postlie \bullet \dend$).

\section{A symmetric property of successors}
\mlabel{sec:symm}
There are symmetries in the iterations of successors. The first instances of such phenomena were discovered in quadri-algebras~\mcite{AL} and then in ennea algebras~\mcite{Le3}.  These instances were shown to also follow from symmetries of black square powers of binary quadratic nonsymmetric operads~\mcite{EG}. Similar symmetries were recently found in operads, such as those from L-dendriform algebras~\mcite{BLN} and L-quadri-algebras~\mcite{LNB}.
This time the symmetries can also be derived from symmetries of Manin products of binary quadratic operads, as we can see in Section~\mref{sec:mp}. We now show that a symmetry hold for the iterated successors of any binary operad without the quadratic condition.

\subsection{A symmetric property of \sucs}

\begin{defn}
{\rm Let $V$ be a vector space and $n\geq1$.
\begin{enumerate}
\item We define the vector space $\gensp^{\sim n}$ by
$$\gensp^{\sim n}:= \gensp \otimes (\bfk \prec \oplus \ \bfk \succ)^{\otimes n} \ . $$
The vector space $\gensp^{\sim n}$ is generated by elements of the form $\gop \otimes \mu_{1}\otimes\ldots\otimes\mu_{n}$ with $\gop \in \gensp$ and $\mu_{i}\in \{ \prec \ , \ \succ \}$. It is obtained by iteration of \ $\widetilde{\;}$ \ defined by Eq.~(\mref{eq:tsp}).
\item Let $\sigma$ be in $\BS_{n}$. We define the map $\phi_{\sigma}: \mathcal{T}(\gensp^{\sim n}) \rightarrow \mathcal{T}(\gensp^{\sim n})$ to be the unique morphism of operads which extends the following morphism of $\BS$-modules
\begin{equation}
\begin{array}{rclc}
\gensp^{\sim n} & \rightarrow & \mathcal{T}(\gensp^{\sim n}) & \\
\gop \otimes \mu_{1}\otimes\ldots\otimes\mu_{n} & \mapsto & \gop \otimes \mu_{\sigma(1)}\otimes\ldots\otimes\mu_{\sigma(n)} & .
\end{array}
\notag 
\end{equation}
 \end{enumerate}}
 \mlabel{de:involu}
\end{defn}

\begin{theorem}\label{thm:involu}
Let $\calp=\mathcal{T}(\gensp)/(R)$ be a binary operad. For any $\sigma$ in $\BS_{n}$, there exists an automorphism $\Phi_{\sigma}$ of the operad $\su^{n}(\calp)$. This induces a morphism of groups $$\BS_{n}\rightarrow\mathrm{Aut}(\su^{n}(\calp)) \ .$$
\end{theorem}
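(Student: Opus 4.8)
The plan is to obtain $\Phi_\sigma$ by descending, along the operadic projection $\mathcal{T}(\gensp^{\sim n})\twoheadrightarrow\su^n(\calp)$, the operad automorphism $\phi_\sigma$ of the free operad $\mathcal{T}(\gensp^{\sim n})$ introduced in Definition~\ref{de:involu}. Observe first that $\phi_\sigma$ is invertible with inverse $\phi_{\sigma^{-1}}$, since on generators $\phi_{\sigma^{-1}}\circ\phi_\sigma$ sends $\gop\otimes\mu_1\otimes\cdots\otimes\mu_n$ back to itself. Hence it is enough to show that $\phi_\sigma$ carries the operadic ideal generated by $\su^n(\relsp)$ into itself; invertibility then upgrades this to an equality and yields an operad automorphism $\Phi_\sigma$ of $\su^n(\calp)$ with $\Phi_\sigma^{-1}=\Phi_{\sigma^{-1}}$.

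The combinatorial core is an iterated form of Proposition~\ref{succpath}: for a labeled $m$-tree $\tau\in\calt(\genbas)$ and leaves $x_1,\dots,x_n\in\lin(\tau)$, the $n$-fold successor $\su_{x_n}\cdots\su_{x_1}(\tau)$ in $\mathcal{T}(\gensp^{\sim n})$ equals the sum, over all independent choices of the ``off-path'' entries, of the tree obtained by relabeling a vertex $v$ of $\tau$ carrying label $\gop$ by $\gop\otimes\mu_1\otimes\cdots\otimes\mu_n$, where, for each $i$, the entry $\mu_i$ is $\prec$ (resp.\ $\succ$) when $v$ lies on the path from the root to $x_i$ and that path turns left (resp.\ right) at $v$, and where $\mu_i$ ranges over $\{\prec,\succ\}$ otherwise, producing the summand $\svec{\gop}{\prec}+\svec{\gop}{\succ}$ in that slot. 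This holds because each successive application of $\su$ touches only the outermost decoration slot: once a tree is labeled over $\gensp^{\sim(i-1)}$, applying $\su_{x_i}$ appends exactly one new entry $\mu_i$ to every vertex label, and by Proposition~\ref{succpath} that entry is governed as above by $x_i$ alone, independently of the earlier slots. I would prove this by a short induction on $|\lin(\tau)|$ (the one-leaf, vertex-free case being trivial) and extend by linearity. Since $\phi_\sigma$ merely reorders the $n$ decoration slots of each vertex label by $\sigma$, it follows that
$$\phi_\sigma\bigl(\su_{x_n}\cdots\su_{x_1}(\tau)\bigr)=\su_{x_{\sigma(n)}}\cdots\su_{x_{\sigma(1)}}(\tau)\ .$$

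Granting this identity the descent is formal. Present $\relsp$ as in~(\ref{eq:pres'}) by locally homogeneous elements $r_s=\sum_i c_{s,i}\tau_{s,i}$, all $\tau_{s,i}$ sharing a common leaf set $L_s$; then the relation space $\su^n(\relsp)$ of $\su^n(\calp)$ is spanned by the elements $\sum_i c_{s,i}\,\su_{x_n}\cdots\su_{x_1}(\tau_{s,i})$ as $(x_1,\dots,x_n)$ runs over $L_s^{\,n}$ and $1\le s\le k$. By the displayed identity and linearity, $\phi_\sigma$ sends such a spanning element to $\sum_i c_{s,i}\,\su_{x_{\sigma(n)}}\cdots\su_{x_{\sigma(1)}}(\tau_{s,i})$, which is again one of them because $(x_1,\dots,x_n)\mapsto(x_{\sigma(1)},\dots,x_{\sigma(n)})$ permutes $L_s^{\,n}$. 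Hence $\phi_\sigma\bigl(\su^n(\relsp)\bigr)=\su^n(\relsp)$, so $\phi_\sigma$ preserves the generated ideal and descends to an automorphism $\Phi_\sigma$ of $\su^n(\calp)$. Finally, unwinding Definition~\ref{de:involu} gives $\phi_\sigma\circ\phi_\tau=\phi_{\tau\sigma}$ on generators, hence $\Phi_\sigma\circ\Phi_\tau=\Phi_{\tau\sigma}$, so $\sigma\mapsto\Phi_{\sigma^{-1}}$ is a group morphism $\BS_n\to\mathrm{Aut}(\su^n(\calp))$ (equivalently $\sigma\mapsto\Phi_\sigma$, with the opposite convention for composing automorphisms).

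The main obstacle is the iterated-path description in the second paragraph, and within it the one genuine point to verify: that the branching $\svec{\gop}{\star}=\svec{\gop}{\prec}+\svec{\gop}{\succ}$ introduced at a given stage of the iteration does not interact with the branchings at later stages, so that the $n$ decoration slots are produced independently and $\sigma$ may be applied to them freely. Once this is read off from Proposition~\ref{succpath}, the stability of $\su^n(\relsp)$ under $\phi_\sigma$, the descent to the quotient, and the multiplicativity in $\sigma$ are all routine.
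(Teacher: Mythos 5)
Your proposal is correct and follows essentially the same route as the paper: descend the slot-permuting automorphism $\phi_\sigma$ of $\mathcal{T}(\gensp^{\sim n})$ through the projection by using the path description of Proposition~\ref{succpath} to get $\phi_\sigma\bigl(\su_{x_n}\cdots\su_{x_1}(\tau)\bigr)=\su_{x_{\sigma(n)}}\cdots\su_{x_{\sigma(1)}}(\tau)$, deduce $\phi_\sigma(\su^n(\relsp))=\su^n(\relsp)$, and conclude via multiplicativity of $\sigma\mapsto\phi_\sigma$. You merely spell out in more detail the independence of the $n$ decoration slots (which the paper dispatches as ``by symmetry and associativity of the tensor product'') and the homomorphism-versus-antihomomorphism convention, which is a fair refinement rather than a different argument.
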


\begin{proof}
Using the interpretation of the \suc given in Proposition \mref{succpath}, when we compute the \suc of a labeled tree $\tau$ in $\mathcal{T}(\gensp)$ we do not change the underlying tree but only the labels of the vertices. So, by symmetry and by associativity of the tensor product, we have
$$ \su_{i_{\sigma(1)}} \ldots \su_{i_{\sigma(n)}}(\tau)= \phi_{\sigma}(\su_{i_{1}} \ldots \su_{i_{n}}(\tau)) \ , $$
where $\sigma \in \BS_{n}$ and where $i_{1},\ldots, i_{n} \in \lin(\tau)$ are not necessarily distinct.

Assume that $R$ is given by Eq.~(\mref{eq:pres'}). Then we obtain
$$\phi_{\sigma}(\su^n(R))=\left\lbrace  \displaystyle{\sum_{j}} c_{s,i} \phi_{\sigma}(\su_{i_{1}} \ldots \su_{i_{n}}(\tau_{s,j}))\; \Big| \;
 i_1,...,i_n\in \lin(\tau_{s,j}), \; 1\leq s\leq
k,  \right\rbrace = \su^n(R).$$
Thus the composite $\gensp^{\sim n} \stackrel{\phi_{\sigma}}{\rightarrow} \mathcal{T}(\gensp^{\sim n}) \twoheadrightarrow \su^{n}(\calp)$ induces a morphism $\Phi_{\sigma}: \su^{n}(\calp)\rightarrow \su^{n}(\calp)$.
Also, by definition, we have
$$\phi_{\sigma} \phi_{\sigma'}= \phi_{\sigma \sigma'}, \forall \sigma, \sigma' \in \BS_{n} \ .$$
We deduce from this the rest of the theorem.
\end{proof}

When $\calp$ is taken to be {\it Ass}, the involution $\Phi_{(12)}:\su(\calp) \rightarrow \su(\calp)$ of Theorem~\mref{thm:involu} gives the following result of Aguiar and Loday~\mcite{AL}:
\begin{coro}
Let $(A,\nwarrow,\swarrow,\nearrow,\searrow)$, be a quadri-algebra. Then its transpose $(A,\nwarrow^t,\swarrow^t,\nearrow^t,\searrow^t)$ is also a quadri-algebra, where $$\nwarrow^t:=\nwarrow,\quad \swarrow^t:=\nearrow,\quad \nearrow^t:=\swarrow,\quad \searrow^t:=\searrow.$$
\end{coro}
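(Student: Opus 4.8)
The plan is to read this off from Theorem~\mref{thm:involu} applied to $\calp=\ass$ with $n=2$. By Example~\mref{ex:dialg} we have $\dend=\su(\ass)$ and $\quado=\su(\dend)=\su^2(\ass)$, so a quadri-algebra is exactly a $\su^2(\ass)$-algebra. Theorem~\mref{thm:involu} then supplies a group morphism $\BS_2\to\mathrm{Aut}(\su^2(\ass))$, and since $\BS_2=\{\mathrm{id},(12)\}$ the transposition gives an involutive automorphism $\Phi_{(12)}$ of the operad $\quado$. For a quadri-algebra $(A,\nwarrow,\swarrow,\nearrow,\searrow)$ with structure morphism $\rho\colon\quado\to\mathrm{End}_A$, the composite $\rho\circ\Phi_{(12)}$ is again an operad morphism, hence a second $\quado$-algebra structure on $A$; it then suffices to check that this new structure is the transpose, which reduces to computing $\Phi_{(12)}$ on generators.

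For that I would use the standard dictionary between the four operations of a quadri-algebra and the basis of $\gensp^{\sim 2}$ from Definition~\mref{de:involu}: writing $\mu$ for the generator of $\ass$, the generators of $\su^2(\ass)$ are $\mu\otimes\mu_1\otimes\mu_2$ with $\mu_1,\mu_2\in\{\prec,\succ\}$, and matching the nine quadri-algebra axioms with the relations obtained by applying $\su^2$ to the associativity relation (reading off each $\su_x$ via Proposition~\mref{succpath}) identifies the two ``diagonal'' generators $\mu\otimes\prec\otimes\prec$ and $\mu\otimes\succ\otimes\succ$ with $\nwarrow$ and $\searrow$, and the two ``off-diagonal'' generators $\mu\otimes\prec\otimes\succ$ and $\mu\otimes\succ\otimes\prec$ with $\swarrow$ and $\nearrow$. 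By Definition~\mref{de:involu}, $\phi_{(12)}$ acts on generators by interchanging the two label slots, $\mu\otimes\mu_1\otimes\mu_2\mapsto\mu\otimes\mu_2\otimes\mu_1$; hence $\Phi_{(12)}$ fixes $\nwarrow$ and $\searrow$ and exchanges $\nearrow$ and $\swarrow$. Transporting this through $\rho\circ\Phi_{(12)}$ produces precisely the operations $\nwarrow^t=\nwarrow$, $\swarrow^t=\nearrow$, $\nearrow^t=\swarrow$, $\searrow^t=\searrow$, as claimed.

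The one step that needs care is purely bookkeeping: fixing the precise correspondence between the symbols $\nwarrow,\nearrow,\swarrow,\searrow$ and the pairs $(\mu_1,\mu_2)$ so that the label swap reproduces the transpose exactly as stated, rather than some other symmetry of the four operations. This is settled once and for all by comparing the Aguiar--Loday presentation of $\quado$ with the $\su^2$-presentation (equivalently, by tracking how the two dendriform structures underlying a quadri-algebra correspond to the inner and outer $\su$-splittings), and it involves no new computation: Theorem~\mref{thm:involu} already guarantees that $\Phi_{(12)}$ preserves the relation ideal of $\quado$, so whatever it does on generators it automatically carries quadri-algebras to quadri-algebras. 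Thus the main (and only real) obstacle is this identification of conventions.
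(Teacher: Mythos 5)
Your proposal is correct and follows essentially the same route as the paper: the paper also derives the corollary from Theorem~\ref{thm:involu} applied to ${\it Quadri}=\su^2({\it Ass})$ (via Example~\ref{ex:dialg}), using the identification $\nwarrow=\tsvec{\gop}{\prec}{\prec}$, $\swarrow=\tsvec{\gop}{\prec}{\succ}$, $\nearrow=\tsvec{\gop}{\succ}{\prec}$, $\searrow=\tsvec{\gop}{\succ}{\succ}$, so that the label-swap $\Phi_{(12)}$ fixes $\nwarrow,\searrow$ and exchanges $\swarrow,\nearrow$. The bookkeeping step you flag is exactly the content of that displayed dictionary, which the paper states without further computation, and your diagonal/off-diagonal matching agrees with it.
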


\begin{proof}
This is clear since, in terms of \sucs, we have ${\it Quadri}=\su^2({\it Ass})$ by Example~\mref{ex:dialg} and
$$\nwarrow=\tsvec{\gop}{\prec}{\prec},\quad \swarrow=\tsvec{\gop}{\prec}{\succ},\quad
\nearrow=\tsvec{\gop}{\succ}{\prec},\quad
\searrow=\tsvec{\gop}{\succ}{\succ},$$ where $\gop$ denotes the
binary operation of associative algebras.
\end{proof}

Next, we provide an example of symmetric property when the double \suc functor is applied to a non-quadratic operad, namely, the operad of Jordan algebra.

\begin{defn}
{\rm Assume that the
characteristic of $\bfk$ is neither two nor three.
\begin{enumerate}
\item
A {\bf Jordan
algebra}~\mcite{Ja} is defined by one bilinear operation $\circ$ and
relation:
$$((x\circ y)\circ u)\circ z+((y\circ z)\circ u)\circ x+((z\circ
x)\circ u)\circ y=(x\circ y)\circ(u\circ z)+(y\circ z)\circ(u\circ
x)+(z\circ x)\circ(u\circ y).$$
\item
A {\bf pre-Jordan
algebra}~\mcite{HNB} is defined by one bilinear operation $\cdot$
and relations
\begin{eqnarray*}
&&(x\odot y)\cdot (z\cdot u)+(y\odot z)\cdot(x\cdot u)+(z\odot
x)\cdot(y\cdot u)\\
&=&z\cdot((x\odot y)\cdot u)+x\cdot((y\odot z)\cdot
u)+y\cdot((z\odot x)\cdot u),\\
&&x\cdot(y\cdot(z\cdot u))+z\cdot(y\cdot(x\cdot u))+((x\odot z)\odot
y)\cdot u\\
&=&z\cdot((x\odot y)\cdot u)+x\cdot((y\odot z)\cdot
u)+y\cdot((z\odot x)\cdot u),
\end{eqnarray*}
where $x\odot y:=x\cdot y + y \cdot x$.
\end{enumerate}
}
\end{defn}
It is easy to obtain the following conclusion:
\begin{prop}
The \suc of the operad {\it Jordan} is the operad {\it PreJordan}.
\mlabel{pp:Jordan}
\end{prop}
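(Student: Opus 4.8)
The plan is to verify directly that applying the \suc construction to the defining relation of the operad $\mathit{Jordan}$ produces exactly the defining relations of $\mathit{PreJordan}$. Write $\circ$ for the single generating operation of $\mathit{Jordan}$ and denote by $r$ the locally homogeneous element of $\mathcal{T}(V)(4)$ corresponding to the Jordan identity
$$((x\circ y)\circ u)\circ z+((y\circ z)\circ u)\circ x+((z\circ x)\circ u)\circ y-(x\circ y)\circ(u\circ z)-(y\circ z)\circ(u\circ x)-(z\circ x)\circ(u\circ y).$$
Set $\prec:=\svec{\circ}{\prec}$ and $\succ:=\svec{\circ}{\succ}$, so that the $\BS_2$-action gives $\prec^{(12)}=\succ$ (since $\circ$ is commutative). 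By definition, $\su(\mathit{Jordan})=\mathcal{T}(\widetilde V)/(\su(r))$, where $\su(r)=\{\su_a(r)\mid a\in\{x,y,z,u\}\}$ generates the relation space as an $\BS$-module.

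First I would compute $\su_a(r)$ for each of the four leaves $a\in\{x,y,z,u\}$, using the path description of Proposition~\ref{succpath}: for a fixed term of $r$ (a labeled $4$-tree $\tau$), one relabels the vertices on the root-to-$a$ path by $\prec$ or $\succ$ according to whether the path turns left or right there, and relabels the remaining vertices by $\star=\prec+\succ$. Since the Jordan identity has degree $4$ and every term is a left-combed or one-level-nested tree, each $\su_a(r)$ expands into a modest sum. Then I would replace $\prec$ throughout by the single operation $\cdot$ of $\mathit{PreJordan}$ (discarding $\succ$, which by commutativity is just the opposite of $\cdot$, i.e. $x\succ y=y\cdot x$), and match the resulting four identities against the two defining relations of $\mathit{PreJordan}$ together with their $\BS$-translates. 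Concretely, $\su_u(r)$ (the leaf that sits ``innermost'' in every term) should collapse—after substituting $x\odot y=x\cdot y+y\cdot x$ for the $\star$-labeled vertices—to the first pre-Jordan relation, while the three remaining successors $\su_x(r),\su_y(r),\su_z(r)$ are permuted versions of a single identity which rearranges to the second pre-Jordan relation; here one uses Lemma~\ref{lem:sactsu} to see that $\su_{\sigma^{-1}(a)}(r^\sigma)=\su_a(r)^\sigma$, so that only one representative in that orbit needs to be checked by hand.

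I would then also confirm the reverse inclusion: the two pre-Jordan relations (and their symmetrizations) lie in the ideal generated by $\su(r)$, which is immediate once the explicit expansions above are shown to span the same $\BS$-submodule of $\mathcal{T}(\widetilde V)(4)$. Finally, by Proposition~\ref{indgenbas} the construction is independent of the chosen basis of the one-dimensional space of generating operations, so the identification $\su(\mathit{Jordan})=\mathit{PreJordan}$ is well posed.

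The main obstacle is purely bookkeeping: the Jordan identity is quartic and not quadratic, so Theorem~\ref{thm:supl} does not apply and there is no shortcut via Manin products; one must honestly expand the six terms under each of the four leaf choices, track the $\star=\prec+\succ$ splittings on the off-path vertices, and then recognize the outcome as the pre-Jordan axioms after the substitution $x\odot y=x\cdot y+y\cdot x$. The risk of sign or index errors is real but the computation is finite and entirely mechanical, which is presumably why the paper states it as ``easy to obtain.''
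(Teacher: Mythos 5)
Your strategy coincides with the paper's: the paper gives no written argument beyond ``easy to obtain'', i.e.\ precisely the direct verification you outline, in the same style as Propositions~\ref{pp:zinb} and~\ref{pp:prelie} (compute the \suc of the defining relation leaf by leaf via Proposition~\ref{succpath}, use Lemma~\ref{lem:sactsu} and the cyclic symmetry of the Jordan identity in $x,y,z$ to reduce to the two orbit representatives $\su_u(r)$ and $\su_x(r)$, and compare with the two pre-Jordan relations, which I will call (PJ1) and (PJ2)).

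Two concrete points in your write-up need correction, though. First, your dictionary between $\{\prec,\succ\}$ and the pre-Jordan product is reversed: to land on the pre-Jordan axioms as written you must set $x\cdot y:=x\succ y$, hence $x\prec y=y\cdot x$ and $x\star y=x\odot y$ (this is consistent with the Rota--Baxter picture $x\succ y=P(x)\circ y$ and with the paper's own convention in the Zinbiel computation, where it is $\succ$ that gets renamed). With that convention one checks directly that $\su_u(r)$ is exactly $\pm(\mathrm{PJ1})$. With your stated choice $\cdot:=\prec$, every product comes out reversed and the four expansions give the \emph{opposite} (right) pre-Jordan relations, so the literal ``matching'' step would fail until you either flip the convention or insert an extra identification with the opposite operad, which you have not flagged. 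Second, $\su_x(r)$ is not a permuted copy of (PJ2) itself but of the difference $(\mathrm{PJ2})-(\mathrm{PJ1})$ (the two pre-Jordan axioms share the same right-hand side, so this difference equates their left-hand sides). This is harmless for the conclusion, since $\{\su_u(r),\su_x(r)\}$ then generates the same $\BS_4$-submodule as $\{(\mathrm{PJ1}),(\mathrm{PJ2})\}$, which is exactly the two-sided comparison of relation spaces you propose; but the claim ``$\su_x(r)$ rearranges to the second pre-Jordan relation'' should be understood modulo (PJ1). Your remark that Theorem~\ref{thm:supl} is unavailable because the Jordan relation is quartic, not quadratic, is correct and is indeed why no Manin-product shortcut exists here.
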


Moreover, we have the following result.
\begin{prop}
The operad $\su^2({\it Jordan})=\su({\it PreJordan})$ is generated by two bilinear operations $\prec$ and $\succ$ that satisfy following relations:
{\allowdisplaybreaks
\begin{eqnarray*}
& &(x\prec y+y\succ x)\prec(z\cdot u)+(y\circ z)\succ(x\prec u)+(z\succ x+x\prec z)\prec(y\cdot u)\\&=&z\succ((x\prec y+y\succ x)\prec u)+x\prec((y\circ z)\cdot u)+y\succ((z\succ x+x\prec z)\prec u);\\
& &(x\circ y)\succ(z\succ u)+(y\circ z)\succ(x\succ u)+(z\circ x)\succ(y\succ u)\\&=&z\succ((x\circ y)\succ u)+x\succ((y\circ z)\succ u)+y\succ((z\circ x)\succ u);\\
& &x\prec(y\cdot(z\cdot u))+z\succ(y\succ(x\prec u))+((x\prec z+z\succ x)\prec y+y\succ(x\prec z+z\succ x))\prec u\\
&=&z\succ((x\prec y+y\succ x)\prec u)+x\prec((y\circ z)\cdot u)+y\succ((z\succ x+x\prec z)\prec u);\\
& &x\succ(y\prec(z\cdot u))+z\succ(y\prec(x\cdot u))+((x\circ z)\succ y+y\prec(x\circ z))\prec u\\
&=&z\succ((x\succ y+y\prec x)\prec u)+x\succ((y\prec z+z\succ y)\prec u)+y\prec((z\circ x)\cdot u);\\
& &x\succ(y\succ(z\prec u))+z\prec(y\cdot(x\cdot u))+((x\succ z+z\prec x)\prec y+y\succ(x\succ z+z\prec x))\prec u\\
&=&z\prec((x\circ y)\cdot u)+x\succ((y\succ z+z\prec y)\prec u)+y\succ((z\prec x+x\succ z)\prec u);\\
& &x\succ(y\succ(z\succ u))+z\succ(y\succ(x\succ u))+((x\circ z)\circ y)\succ u\\
&=&z\succ((x\circ y)\succ u)+x\succ((y\circ z)\succ u)+y\succ((z\circ x)\succ u),
\end{eqnarray*}
}
where $x\cdot y:=x\prec y+x\succ y, x\circ y:=x\cdot y+y\cdot x$. The operation $\cdot$ satisfies the relations defining a preJordan algebra and the operation $\circ$ satisfies the relations defining a Jordan algebra.
\end{prop}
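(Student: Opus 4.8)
The plan is to compute the \suc of the operad $\mathit{PreJordan}$ directly, using the path-relabeling description of Proposition~\ref{succpath}, and to verify that the result coincides with the six relations displayed above. Recall from Proposition~\ref{pp:Jordan} that $\mathit{PreJordan}=\su(\mathit{Jordan})$, so that $\su^2(\mathit{Jordan})=\su(\mathit{PreJordan})$, and that a presentation of $\mathit{PreJordan}$ is obtained from the two defining relations of a pre-Jordan algebra, each of which is an arity-$4$ (locally homogeneous) combination of labeled $4$-trees. By the definition of the \suc of a binary operad, the relation space $\su(R_{\mathit{PreJordan}})$ is generated, as an $\BS$-module, by $\su_w(r)$ where $r$ runs over the two generating relations and $w$ runs over the four leaves of the corresponding $4$-tree. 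First I would fix one generating relation of $\mathit{PreJordan}$, present it as an explicit sum of labeled trees in $\calt(\genbas)$ with $\genbas=\{\prec,\succ\}$ and $\BS_2$-action $\prec^{(12)}=\succ$, and then apply Proposition~\ref{succpath} leaf by leaf.

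The key steps, in order, are: (1) write out the two pre-Jordan relations as elements of $\mathcal{T}(\widetilde{\gensp})(4)$, i.e. in terms of the operations $\prec,\succ$ and $\cdot=\prec+\succ$ of $\mathit{PreJordan}$, being careful with the $\odot$ and the placement of the distinguished ``outer'' variable $u$; (2) for each of the four leaf positions $x,y,z,u$, relabel every vertex on the root-to-leaf path by $\prec$ or $\succ$ according to whether the path turns left or right there, and every off-path vertex by $\star=\prec+\succ+\cdot$ (here $\cdot$ now denotes the middle generator of $\su(\mathit{PreJordan})$, since we are taking the \suc of an operad whose two generators we renamed $\prec,\succ$), then re-express the bookkeeping in the paper's chosen notation $x\cdot y:=x\prec y+x\succ y$, $x\circ y:=x\cdot y+y\cdot x$; (3) use Lemma~\ref{lem:sactsu} to reduce the number of genuinely new computations — the $\BS_n$-equivariance means that applying a permutation to $\tau$ permutes the leaf labels in $\su_x(\tau)$, so several of the six displayed relations are images of others under relabeling of $\{x,y,z,u\}$, and one only needs to check representatives of the orbits; (4) match the resulting twelve expressions (six from each pre-Jordan relation, before taking $\BS$-module span) against the six displayed relations, and observe that the remaining ones are consequences; (5) conclude the final two sentences — that $\cdot$ satisfies the pre-Jordan relations and $\circ$ the Jordan relations — by invoking Proposition~\ref{pp:suast}\eqref{it:suast} (the morphism $\calp\to\su(\calp)$, $\gop\mapsto\svec{\gop}{\star}$) together with Proposition~\ref{pp:Jordan}, which immediately gives that $\star$-type composites reproduce $\su(\mathit{Jordan})=\mathit{PreJordan}$ and hence that $\circ$ reproduces $\mathit{Jordan}$.

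The main obstacle I expect is purely organizational bookkeeping rather than any conceptual difficulty: the pre-Jordan relations already involve nested products of depth three with a fixed ``acting'' variable $u$, so each labeled $4$-tree has three internal vertices and the \suc replaces each by a two- or three-term sum; expanding $\su_x$ of a six-term relation thus produces a sizable expression, and the delicate part is tracking which vertices lie on the path to $x$ (turning left or right) versus off it, and then collapsing $\prec+\succ$ back to $\cdot$ and $\cdot+\cdot^{(12)}$ back to $\circ$ consistently so that the output matches the stated form. A secondary subtlety is making sure the $\BS_2$-action is handled correctly when a leaf sits in the right subtree of a vertex labeled by an operation that is itself $\succ$ versus $\prec$, but this is exactly what Lemma~\ref{lem:sactsu} is designed to control. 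Since the statement says ``It is easy to obtain'' of the analogous Proposition~\ref{pp:Jordan}, I would present this proof in the same spirit: indicate the mechanism (Proposition~\ref{succpath} applied to the pre-Jordan presentation, plus Lemma~\ref{lem:sactsu} to cut down cases), display one representative computation in full, and state that the remaining relations are obtained the same way.
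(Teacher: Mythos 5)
Your overall strategy is the natural one (the paper itself records this proposition without proof): expand the two pre-Jordan relations as sums of labeled $4$-trees, apply the relabeling description of Proposition~\ref{succpath} leaf by leaf, use Lemma~\ref{lem:sactsu} to cut down the cases, and obtain the final two sentences from the morphism of Proposition~\ref{pp:suast}(\ref{it:suast}) combined with Proposition~\ref{pp:Jordan}. However, your step (2) misstates the central relabeling rule, and carried out as written it would not produce the displayed relations. For the \suc there is no ``middle generator'': an off-path vertex labeled by a basis operation $\gop$ of $\mathit{PreJordan}$ must be replaced by $\svec{\gop}{\prec}+\svec{\gop}{\succ}$ only; the rule $\prec+\succ+\cdot$ that you quote is the \Tsuc rule, i.e.\ it computes $\tsu(\mathit{PreJordan})$, a different and larger operad. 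In $\su(\mathit{PreJordan})=\su^{2}(\mathit{Jordan})$ the generating space is spanned by the four operations $\prec,\succ,\prec^{(12)},\succ^{(12)}$, and the symbols $\cdot$ and $\circ$ in the statement are not generators but the derived operations $\cdot=\prec+\succ$ and its symmetrization $\circ$; they appear in the output precisely because an off-path vertex carrying the pre-Jordan product is relabeled by $\prec+\succ=\cdot$, and one carrying $\odot$ (after expanding $\odot$ into the product plus its opposite) is relabeled by $\circ$. Related to this, the relabeling must remember the original label of each vertex: the trees are decorated by the pre-Jordan product and its opposite, so a vertex labeled $\gop$ becomes $\svec{\gop}{\prec}$ or $\svec{\gop}{\succ}$, and it is the $\BS_{2}$-action on generators, $\svec{\gop}{\prec}^{(12)}=\svec{\gop^{(12)}}{\succ}$, that turns the opposite-product labels into $\prec^{(12)},\succ^{(12)}$ --- Lemma~\ref{lem:sactsu} governs the action on leaf labels, not this point.

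A smaller bookkeeping slip: each pre-Jordan relation has arity $4$, so the \suc yields one relation per leaf, hence four per relation and eight in all, not ``twelve, six from each''; exploiting the symmetry of the first relation in $x,y,z$ and of the second in $x,z$ via Lemma~\ref{lem:sactsu} then reduces the genuinely distinct computations, and the results match the displayed list up to the $\BS_{4}$-action. With the relabeling rule corrected as above, your plan --- one representative computation in full, equivariance for the rest, and Proposition~\ref{pp:suast} together with Proposition~\ref{pp:Jordan} for the assertions that $\cdot$ is pre-Jordan and $\circ$ is Jordan --- does establish the proposition.
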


\begin{prop}
The map $\phi$ that sends $\prec$ to $\prec^{(12)}$, $\prec^{(12)}$ to $\prec$ and leaves the other operations of $\su^2({\it Jordan})$ invariant induces an involution of the operad $\su^2({\it Jordan})$.
\end{prop}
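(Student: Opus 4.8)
Write $\mathit{Jordan}=\mathcal{T}(\gensp)/(R)$, where $\gensp=\bfk\,\gop$ is the one–dimensional \emph{symmetric} $\BS_2$‑module generated by the Jordan product $\gop$ (so $\gop^{(12)}=\gop$) and $R$ is the degree–four Jordan identity. Then $\su^2(\mathit{Jordan})=\mathcal{T}(\widetilde{\widetilde{\gensp}})/(\su^2(R))$, and $\widetilde{\widetilde{\gensp}}$ has the four generating operations $\tsvec{\gop}{\prec}{\prec},\tsvec{\gop}{\prec}{\succ},\tsvec{\gop}{\succ}{\prec},\tsvec{\gop}{\succ}{\succ}$, with $\BS_2$‑action $\tsvec{\gop}{\mu}{\nu}^{(12)}=\tsvec{\gop}{\bar\mu}{\bar\nu}$ (a bar interchanging $\prec$ and $\succ$), which holds precisely because $\gop^{(12)}=\gop$. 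Under the identification $\prec=\tsvec{\gop}{\prec}{\prec}$, $\succ=\tsvec{\gop}{\prec}{\succ}$ (hence $\prec^{(12)}=\tsvec{\gop}{\succ}{\succ}$, $\succ^{(12)}=\tsvec{\gop}{\succ}{\prec}$), the map $\phi$ of the statement is the linear involution of $\widetilde{\widetilde{\gensp}}$ interchanging $\tsvec{\gop}{\prec}{\prec}$ and $\tsvec{\gop}{\succ}{\succ}$ and fixing the two mixed operations. The first thing I would check is that $\phi$ is $\BS_2$‑equivariant on $\widetilde{\widetilde{\gensp}}$; it therefore extends uniquely to an endomorphism of the free operad $\mathcal{T}(\widetilde{\widetilde{\gensp}})$, and it obviously squares to the identity there. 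The only point to establish is that it descends to $\su^2(\mathit{Jordan})$, i.e.\ that $\phi\bigl(\su^2(R)\bigr)\subseteq(\su^2(R))$.

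\textbf{Reduction to a known automorphism and a reflection.} The plan is to factor $\phi=\Phi_{(12)}\circ\rho$, where $\Phi_{(12)}$ is the automorphism of $\su^2(\mathit{Jordan})$ furnished by Theorem~\ref{thm:involu} applied to the binary operad $\mathit{Jordan}$ (no quadratic hypothesis is needed there), $n=2$ and the transposition of $\BS_2$ — concretely $\Phi_{(12)}$ interchanges $\tsvec{\gop}{\prec}{\succ}$ and $\tsvec{\gop}{\succ}{\prec}$ and fixes $\tsvec{\gop}{\prec}{\prec},\tsvec{\gop}{\succ}{\succ}$ — and $\rho$ is the endomorphism of $\mathcal{T}(\widetilde{\widetilde{\gensp}})$ induced on generators by $w\mapsto w^{(12)}$, i.e.\ by $\tsvec{\gop}{\mu}{\nu}\mapsto\tsvec{\gop}{\bar\mu}{\bar\nu}$. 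A quick check on the four generators confirms $\Phi_{(12)}\circ\rho=\phi$. Since $\Phi_{(12)}$ is already an operad automorphism, it remains to show that $\rho$ preserves the ideal $(\su^2(R))$.

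\textbf{The reflection is an automorphism.} For $\rho$ I would argue via the path description of the successors (Propositions~\ref{succpath} and, at the outer level, the analogue for the second successor). Applying $\rho$ to a generator $\su_x\su_{x'}(\tau)$ of $\su^2(R)$ amounts to interchanging $\prec$ and $\succ$ in \emph{every} vertex label; by the path description this is exactly the effect of replacing $\tau$ by its left–right mirror $\tau^{m}$ (each left turn becoming a right turn and conversely) and performing the same double successor on $\tau^{m}$. Because $\gop$ is symmetric, $\tau$ and $\tau^{m}$ represent the same element of $\mathcal{T}(\gensp)$ up to a relabelling of the leaves, and — the key point — the Jordan identity is invariant under this mirroring: each "left–comb" monomial $((x\circ y)\circ u)\circ z$ equals its mirror $z\circ(u\circ(x\circ y))$ by the commutativity of $\circ$, and each "balanced" monomial $(x\circ y)\circ(u\circ z)$ is manifestly mirror–symmetric. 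Hence $\rho$ carries each generator $\su_x\su_{x'}(r)$ of $\su^2(R)$ into the span of the generators $\su_{\tilde x}\su_{\tilde x'}(r)$ (up to leaf permutations), so $\rho$ preserves $(\su^2(R))$ and is an automorphism of $\su^2(\mathit{Jordan})$. Consequently $\phi=\Phi_{(12)}\circ\rho$ is an automorphism; and $\phi^2=\mathrm{id}$ because, on the four generating operations, $\Phi_{(12)}$ and $\rho$ are commuting involutions whose product is $\phi$.

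\textbf{Main obstacle.} The delicate step is the one in the previous paragraph: making precise the assertion that applying $\rho$ to an iterated successor term equals the iterated successor read off on the mirror tree \emph{modulo a permutation of leaf labels}, i.e.\ handling correctly the $\BS_n$‑bookkeeping produced by reflecting a planar binary tree inside a symmetric operad. This is exactly the place where the commutativity of the Jordan product is used, and it is what makes $\rho$ (and hence $\phi$) descend to the quotient. An alternative, more computational route avoids reflections entirely: one checks directly that the six defining relations of $\su^2(\mathit{Jordan})$ listed in the preceding proposition are stable under the substitution $x\prec y\mapsto y\prec x$, $x\succ y\mapsto x\succ y$ — the two relations involving only $\succ$ and $\circ$ are manifestly stable since $\circ=x\prec y+x\succ y+y\prec x+y\succ x$ is $\phi$‑invariant, and the remaining four require a finite but tedious verification.
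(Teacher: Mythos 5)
There is a genuine gap, and it originates in your choice of identification. The paper identifies $\prec=\tsvec{\gop}{\succ}{\prec}$ and $\succ=\tsvec{\gop}{\succ}{\succ}$ (hence $\prec^{(12)}=\tsvec{\gop}{\prec}{\succ}$ and $\succ^{(12)}=\tsvec{\gop}{\prec}{\prec}$), and under this identification $\phi$ \emph{is} the slot-exchange $\Phi_{(12)}$ of Theorem~\ref{thm:involu}: it swaps $\tsvec{\gop}{\succ}{\prec}\leftrightarrow\tsvec{\gop}{\prec}{\succ}$ and fixes $\tsvec{\gop}{\prec}{\prec}$ and $\tsvec{\gop}{\succ}{\succ}$, so the proposition is an immediate corollary and no further map is needed. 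Your identification $\prec=\tsvec{\gop}{\prec}{\prec}$, $\succ=\tsvec{\gop}{\prec}{\succ}$ forces the extra factor $\rho$ (the ``opposite'' map $w\mapsto w^{(12)}$ on generators), and that is exactly where the proof breaks down.

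The assertion that $\rho$ preserves $\bigl(\su^2(R)\bigr)$ is not established by the mirror argument, and that argument proves too much. The identity ``a planar tree equals its mirror with every vertex label replaced by its $(12)$-translate'' holds for \emph{every} element of the free operad on a symmetric generator, so the ``mirror invariance of the Jordan identity'' is automatic and carries no information; in particular your reasoning applies verbatim with $\mathit{Jordan}$ replaced by $\comm$, where it would show that the opposite map is an automorphism of $\su(\comm)=\zinb$. That is false: $\overline{\su_x(\tau)}$ (all labels barred) is the \emph{opposite} splitting, which labels left turns by $\succ$ rather than $\prec$; it is not $\su_x$ of the mirror tree up to a leaf relabelling. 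Concretely, barring $\su_z(v_1-v_9)=(x\succ y+y\succ x)\succ z-x\succ(y\succ z)$ from Proposition~\ref{pp:zinb} and rewriting with $a\prec b=b\succ a$ gives $z\succ(y\succ x)+z\succ(x\succ y)-(z\succ y)\succ x$; matching the right-comb monomials forces any expression of this element in the $\BS_3$-span of the Zinbiel relation to be minus the sum of the translates indexed by $(z,y,x)$ and $(z,x,y)$, and the left-comb parts then fail to agree. (The same phenomenon distinguishes $\su(\lie)=\prelie$, a right pre-Lie operad, from its opposite, the left pre-Lie operad.) So commutativity of the Jordan product cannot rescue the step you flag as delicate. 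Either adopt the paper's identification, under which $\phi=\Phi_{(12)}$ outright, or actually carry out the finite verification on the six listed relations that you sketch at the end; the reduction through $\rho$ as written does not go through.
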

\begin{proof}
It is a corollary of Theorem~\mref{thm:involu} with the following identifications:
$$\succ^{(12)}=\tsvec{\gop}{\prec}{\prec},\quad \prec^{(12)}=\tsvec{\gop}{\prec}{\succ},\quad
\prec=\tsvec{\gop}{\succ}{\prec},\quad
\succ=\tsvec{\gop}{\succ}{\succ},$$ where $\gop$ denotes the generating operation of {\it Jordan}.
\end{proof}

\subsection{A symmetric property of \Tsucs}

\begin{defn}
{\rm Let $V$ be a vector space and $n\geq1$.
\begin{enumerate}
\item We define the vector space $\gensp^{\wedge n}$ by
$$\gensp^{\wedge n}:= \gensp \otimes (\bfk \prec \oplus \ \bfk \succ \oplus \ \bfk \ \cdot \ )^{\otimes n} \ . $$
The vector space $\gensp^{\wedge n}$ is generated by elements of the form $\gop \otimes \mu_{1}\otimes\ldots\otimes\mu_{n}$, with $\gop \in \gensp$ and $\mu_{i}\in \{ \prec \ , \ \succ, \ \cdot \ \}$. It is obtained by iteration of \ $\widehat{\;}$ \ defined in Eq.~(\mref{eq:ttsp}).
\item Let $\sigma$ be in $\BS_{n}$. We define the map $\psi_{\sigma}: \mathcal{T}(\gensp^{\wedge n}) \rightarrow \mathcal{T}(\gensp^{\wedge n})$ to be the unique morphism of operads which extends which extends the following morphism of $\BS$-modules
\begin{equation}
\begin{array}{rclc}
\gensp^{\wedge n} & \rightarrow & \mathcal{T}(\gensp^{\wedge n}) & \\
\gop \otimes \mu_{1}\otimes\ldots\otimes\mu_{n} & \mapsto & \gop \otimes \mu_{\sigma(1)}\otimes\ldots\otimes\mu_{\sigma(n)} & .
\end{array}
\notag 
\end{equation}
 \end{enumerate}}
\mlabel{de:tinvolu}
\end{defn}

\begin{theorem}\label{thm:tinvolu}
Let $\calp=\mathcal{T}(\gensp)/(R)$ be a binary operad. For any $\sigma$ in $\BS_{n}$, there exists an automorphism $\Psi_{\sigma}$ of the operad $\tsu^{n}(\calp)$. This induces a morphism of groups $$\BS_{n}\rightarrow\mathrm{Aut}(\tsu^{n}(\calp))\ . $$
\end{theorem}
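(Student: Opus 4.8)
The plan is to follow the proof of Theorem~\ref{thm:involu} almost verbatim, replacing the \suc by the \Tsuc and Proposition~\ref{succpath} by Proposition~\ref{tsuccpath}. The key starting remark is that forming a \Tsuc $\tsu_J(\tau)$ of a labeled tree $\tau$ leaves the underlying planar tree and its leaf labels unchanged and merely relabels each vertex: a vertex with label $\gop$ receives a new label $\svec{\gop}{\prec}$, $\svec{\gop}{\succ}$, $\svec{\gop}{\cdot}$, or $\svec{\gop}{\star}$, the choice depending only on $J$ and on the way the root-to-$J$ paths behave locally at that vertex. In particular $\lin(\tsu_J(\tau))=\lin(\tau)$, so iterating \Tsucs with respect to subsets of $\lin(\tau)$ is meaningful, and after $n$ iterations each vertex of $\tau$ carries a label $\gop\otimes\mu_1\otimes\cdots\otimes\mu_n$ with $\mu_i\in\{\prec,\succ,\cdot,\star\}$, where $\mu_i$ is the datum produced by the $i$-th \Tsuc and depends only on the subset chosen at that step and on the fixed tree $\tau$.

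First I would establish the tree-level identity: for every $\tau\in\mathcal{T}(\gensp)$, all nonempty subsets $J_1,\dots,J_n\subseteq\lin(\tau)$, and every $\sigma\in\BS_n$,
$$\tsu_{J_{\sigma(n)}}\cdots\tsu_{J_{\sigma(1)}}(\tau)=\psi_\sigma\big(\tsu_{J_n}\cdots\tsu_{J_1}(\tau)\big).$$
By the remark above, each application of $\tsu_{J_i}$ simply appends to the label of every vertex one further tensor slot $\mu_i$, whose value is independent both of the slots already present and of the order in which the operations are performed; therefore carrying out the $\tsu_{J_i}$ in the order prescribed by $\sigma$ amounts to permuting the $n$ slots by $\sigma$, which is exactly the effect of $\psi_\sigma$. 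This uses only the associativity and symmetry of the tensor product together with the linearity of $\psi_\sigma$, the latter being needed because the $\svec{\gop}{\star}$ labels stand for sums.

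Next, assuming $R$ is given by~(\ref{eq:pres'}), I would recall that the relation space of $\tsu^n(\calp)$ is generated, as an $\BS$-module, by the elements $\sum_i c_{s,i}\,\tsu_{J_n}\cdots\tsu_{J_1}(\tau_{s,i})$ with $\emptyset\neq J_1,\dots,J_n\subseteq\lin(\tau_{s,i})$ (the set $\lin(\tau_{s,i})$ being independent of $i$) and $1\le s\le k$. The morphism $\psi_\sigma$ of Definition~\ref{de:tinvolu} is an automorphism of the free operad $\mathcal{T}(\gensp^{\wedge n})$, with inverse $\psi_{\sigma^{-1}}$; by the tree-level identity it permutes the above generators among themselves (replacing the tuple $(J_1,\dots,J_n)$ by $(J_{\sigma(1)},\dots,J_{\sigma(n)})$), hence it maps the operadic ideal $(\tsu^n(R))$ onto itself. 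Therefore the composite $\gensp^{\wedge n}\stackrel{\psi_\sigma}{\longrightarrow}\mathcal{T}(\gensp^{\wedge n})\twoheadrightarrow\tsu^n(\calp)$ factors through the quotient and defines an automorphism $\Psi_\sigma$ of $\tsu^n(\calp)$. Finally, $\psi_\sigma\psi_{\sigma'}=\psi_{\sigma\sigma'}$ holds directly from Definition~\ref{de:tinvolu}, so $\sigma\mapsto\Psi_\sigma$ is the asserted morphism $\BS_n\to\mathrm{Aut}(\tsu^n(\calp))$.

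The only step that genuinely requires care is the tree-level identity, and within it the case distinction packaged in Proposition~\ref{tsuccpath}: one must check that the vertex relabelling is order-independent even in the ``mixed'' situations, namely when some root-to-$J_i$ paths turn left and others turn right at a vertex (producing $\svec{\gop}{\cdot}$) and when the vertex lies off all such paths (producing $\svec{\gop}{\star}$). Since Proposition~\ref{tsuccpath} already isolates this local description, the verification reduces to bookkeeping of tensor slots; everything else is formal, exactly as in the \suc case of Theorem~\ref{thm:involu}.
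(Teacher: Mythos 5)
Your proposal is correct and coincides with the paper's argument: the paper proves Theorem~\ref{thm:tinvolu} by simply invoking the proof of Theorem~\ref{thm:involu}, which is exactly the argument you spell out — the relabeling description of Proposition~\ref{tsuccpath} gives the tree-level permutation identity via symmetry and associativity of the tensor product, hence $\psi_\sigma$ preserves $\tsu^n(R)$ and descends to $\Psi_\sigma$, and $\psi_\sigma\psi_{\sigma'}=\psi_{\sigma\sigma'}$ yields the group morphism. Your explicit attention to the mixed cases ($\svec{\gop}{\cdot}$ and $\svec{\gop}{\star}$) and to linearity is a welcome elaboration of the same route, not a different one.
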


\begin{proof}
This proof follows the same arguments as the proof of Theorem \ref{thm:involu}.
\end{proof}

When $\calp$ is taken to be {\it Ass}, the involution $\Psi_{(12)}:\tsu(\calp)\rightarrow\tsu(\calp)$ of Theorem~\mref{thm:tinvolu} gives the following result of Leroux~\mcite{Le3}:
\begin{coro}
Let $(A,\nwarrow,\swarrow,\prec,\nearrow,\searrow,\succ,\uparrow, \downarrow,\circ)$ be an ennea-algebra. Then its transpose $(A,\nwarrow^t,\swarrow^t,\prec^t,\nearrow^t,\searrow^t,\succ^t,\uparrow^t, \downarrow^t,\circ^t)$ is also an ennea-algebra, where $$\nwarrow^t:=\nwarrow,\; \swarrow^t:=\nearrow,\; \prec^t:=\uparrow,\;
\nearrow^t:=\swarrow,\; \searrow^t:=\searrow,\;
\succ^t:=\downarrow,\; \uparrow^t:=\prec,\; \downarrow^t:=\succ,\; \circ^t:=\circ.$$
\end{coro}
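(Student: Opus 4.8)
The plan is to obtain this statement as a direct application of Theorem~\mref{thm:tinvolu}, in exactly the same way that the corresponding assertion for quadri-algebras was deduced from Theorem~\mref{thm:involu}. First I would record that, in the language of \Tsucs, the operad of ennea-algebras is the second iterated \Tsuc of the (nonsymmetric) operad of associative algebras: $\tsu({\it Ass})=\tridend$ and $\tsu(\tridend)={\it Ennea}$, as noted earlier in the paper. Hence the nine generating operations of an ennea-algebra are indexed by the pairs $(\mu_1,\mu_2)$ with $\mu_i\in\{\prec,\succ,\cdot\}$, through $\gop\otimes\mu_1\otimes\mu_2$ where $\gop$ denotes the associative product, and I would fix the dictionary
$$\nwarrow=\tsvec{\gop}{\prec}{\prec},\ \swarrow=\tsvec{\gop}{\prec}{\succ},\ \prec=\tsvec{\gop}{\prec}{\cdot},\ \nearrow=\tsvec{\gop}{\succ}{\prec},\ \searrow=\tsvec{\gop}{\succ}{\succ},\ \succ=\tsvec{\gop}{\succ}{\cdot},\ \uparrow=\tsvec{\gop}{\cdot}{\prec},\ \downarrow=\tsvec{\gop}{\cdot}{\succ},\ \circ=\tsvec{\gop}{\cdot}{\cdot}.$$

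Next I would invoke Theorem~\mref{thm:tinvolu} with $n=2$ and $\sigma=(12)\in\BS_2$, which supplies an automorphism $\Psi_{(12)}$ of $\tsu^2({\it Ass})={\it Ennea}$. By Definition~\mref{de:tinvolu}, this automorphism acts on the generating operations by interchanging the two tensor slots, $\gop\otimes\mu_1\otimes\mu_2\mapsto\gop\otimes\mu_2\otimes\mu_1$. Reading this off through the dictionary above gives $\nwarrow\mapsto\nwarrow$, $\searrow\mapsto\searrow$, $\circ\mapsto\circ$ together with $\swarrow\leftrightarrow\nearrow$, $\prec\leftrightarrow\uparrow$, $\succ\leftrightarrow\downarrow$, which is precisely the transpose assignment in the statement. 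Since $\Psi_{(12)}$ is an automorphism of the operad ${\it Ennea}$, transporting an ennea-algebra structure along it yields another ennea-algebra structure, and this transported structure is exactly the transpose; that is the assertion.

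The only step requiring genuine care --- and the closest thing here to an obstacle --- is checking the index dictionary: one must verify that Leroux's labeling of the nine operations is compatible with the recursive definition of $\tsu$, so that ``swap the two slots'' really does correspond to $\swarrow^t=\nearrow$, $\prec^t=\uparrow$, $\succ^t=\downarrow$ while fixing $\nwarrow$, $\searrow$ and $\circ$. This is a finite verification that runs parallel to the quadri-algebra case, where the four operations $\nwarrow,\swarrow,\nearrow,\searrow$ correspond to $(\prec,\prec),(\prec,\succ),(\succ,\prec),(\succ,\succ)$; no real computation is involved, and everything else is a bare application of Theorem~\mref{thm:tinvolu}.
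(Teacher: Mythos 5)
Your proposal is correct and follows the paper's own route: the paper likewise identifies ${\it Ennea}=\tsu^2({\it Ass})$, writes the nine operations as $\tsvec{\gop}{\mu_1}{\mu_2}$ with exactly your dictionary, and applies the automorphism from Theorem~\mref{thm:tinvolu} (with $\sigma=(12)$, $n=2$) that interchanges the two tensor slots. Your explicit read-off of the transpose assignment is the same finite verification the paper leaves implicit, so there is nothing to add.
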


\begin{proof}
In fact, in this case ${\it Ennea}=\tsu^2({\it Ass})$ and in our terminology, the products of $A$ are reformulated as follows:
$$\nwarrow=\tsvec{\gop}{\prec}{\prec},\quad
\swarrow=\tsvec{\gop}{\prec}{\succ},\quad
\prec=\tsvec{\gop}{\prec}{\cdot},\quad
\nearrow=\tsvec{\gop}{\succ}{\prec},\quad
\searrow=\tsvec{\gop}{\succ}{\succ},\quad
\succ=\tsvec{\gop}{\succ}{\cdot},\quad
\uparrow=\tsvec{\gop}{\cdot}{\prec},\quad
\downarrow=\tsvec{\gop}{\cdot}{\succ},
\circ=\tsvec{\gop}{\cdot}{\cdot},$$
where $\gop$ denotes the generating operation of {\it Ass}.
\end{proof}

\section*{Appendix: further examples of successors}\label{Appendix}

This appendix is not needed in the rest of the paper. Its purpose is to provide more examples of \sucs and \Tsucs.

\subsection*{{\rm A.1.} L-quadri and L-dendriform operads}

An {\bf L-dendriform  algebra}~\mcite{BLN} is defined to be a $\bfk$-vector space $A$ with two
bilinear operations $\prec, \succ:A\otimes A\rightarrow A$ that satisfy relations
$$(x\prec y)\prec z+y\succ(x\prec z)=x\prec(y\cdot z)+(y\succ
x)\prec z,$$
$$(x\cdot y)\succ z+y\succ(x\succ z)=x\succ(y\succ z)+(y\cdot x)\succ
z,$$ where $\cdot =\prec+\succ$.

\begin{prop}\label{supreLie}
The operad {\it LDend} is the \suc of $\prelie$, equivalently $$\prelie\bullet \prelie=LDend \ .$$
\end{prop}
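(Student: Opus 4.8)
The plan is to deduce both statements from a single computation, using Theorem~\ref{thm:supl}. Since $\prelie$ is binary and quadratic, that theorem already gives $\su(\prelie)\cong\prelie\bullet\prelie$, so the two assertions of the proposition are equivalent and it is enough to prove $\su(\prelie)\cong\ldend$. To this end I would fix the presentation $\prelie=\mathcal{T}(\gensp)/(\relsp)$ with $\gensp=\bfk[\BS_2]$ having $\bfk[\BS_2]$-basis $\{\mu\}$ (and $\mu':=\mu^{(12)}$), where $\relsp$ is the $\BS_3$-submodule of $\mathcal{T}(\gensp)(3)$ generated by the right pre-Lie identity
$$r:=(x\mu y)\mu z-x\mu(y\mu z)-(x\mu z)\mu y+x\mu(z\mu y).$$
By construction $\su(\prelie)=\mathcal{T}(\widetilde{\gensp})/(\su(\relsp))$, with $\su(\relsp)$ the $\BS_3$-submodule generated by $\su_x(r),\su_y(r),\su_z(r)$; I would write $\prec:=\svec{\mu}{\prec}$ and $\succ:=\svec{\mu}{\succ}$, which form a $\bfk[\BS_2]$-basis of $\widetilde{\gensp}$, and abbreviate $\cdot:=\prec+\succ=\svec{\mu}{\star}$, the image of $\mu$ under the canonical morphism $\prelie\to\su(\prelie)$ of Proposition~\ref{pp:suast}(\ref{it:suast}).

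Next I would compute the three generating relations of $\su(\relsp)$ by applying the explicit rule for the \suc from Proposition~\ref{succpath} (on the path from the root to the chosen leaf, relabel each vertex by $\prec$ or $\succ$ according to whether the path turns left or right; relabel the remaining vertices by $\star$). A short calculation on the four trees occurring in $r$ yields
\begin{align*}
\su_x(r)&=(x\prec y)\prec z-x\prec(y\cdot z)-(x\prec z)\prec y+x\prec(z\cdot y),\\
\su_y(r)&=(x\succ y)\prec z-x\succ(y\prec z)-(x\cdot z)\succ y+x\succ(z\succ y),\\
\su_z(r)&=(x\cdot y)\succ z-x\succ(y\succ z)-(x\succ z)\prec y+x\succ(z\prec y).
\end{align*}
A good sanity check at this point is $\su_x(r)+\su_y(r)+\su_z(r)=\widetilde{r}$, which is Lemma~\ref{le:su}; this also records that $\cdot=\prec+\succ$ is a right pre-Lie product, i.e.\ it recovers Proposition~\ref{pp:suast}(\ref{it:suast}) in this case.

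The last and main step is to identify $\su(\relsp)(3)$ with the relation space of $\ldend$. The spaces of generating operations agree (both $\ldend$ and $\su(\prelie)$ have two binary operations $\prec,\succ$ with no symmetry, hence the same $\BS_2$-module), so what remains is to check that the $\BS_3$-submodule of $\mathcal{T}(\widetilde{\gensp})(3)$ generated by $\su_x(r),\su_y(r),\su_z(r)$ equals the one generated by the two defining relations $(x\prec y)\prec z+y\succ(x\prec z)-x\prec(y\cdot z)-(y\succ x)\prec z$ and $(x\cdot y)\succ z+y\succ(x\succ z)-x\succ(y\succ z)-(y\cdot x)\succ z$ of $\ldend$. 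This is a finite linear-algebra verification: I would express each $\ldend$-relation as a $\bfk$-combination of $\BS_3$-translates of $\su_x(r),\su_y(r),\su_z(r)$, and conversely each $\su_v(r)$ as a combination of $\BS_3$-translates of the $\ldend$-relations; by Lemma~\ref{lem:sactsu} the translate $\su_v(r)^{\sigma}$ equals $\su_{\sigma^{-1}(v)}(r^{\sigma})$, and since $r$ is antisymmetric in its last two arguments the trees $r^{\sigma}$ run over $\pm\{r(x,y,z),r(y,z,x),r(z,x,y)\}$, leaving only nine spanning vectors to handle. I expect the only real obstacle to be bookkeeping: the $\BS_2$-twisted operations $\svec{\mu'}{\prec}=\succ^{(12)}$ and $\svec{\mu'}{\succ}=\prec^{(12)}$ must be tracked carefully, since these are what produce terms such as $(y\succ x)\prec z$ and $(y\cdot x)\succ z$ that occur in the $\ldend$ axioms but not in the $\su_v(r)$ written above. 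Once the two submodules are shown to coincide we obtain $\su(\prelie)\cong\ldend$, and hence, by the reduction in the first paragraph, also $\prelie\bullet\prelie=\ldend$.
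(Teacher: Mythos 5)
Your two displayed computations of $\su_x(r)$, $\su_y(r)$, $\su_z(r)$ are correct, and your overall strategy --- compute the \sucs of the generating pre-Lie relation and match them against the $\ldend$ axioms, using Theorem~\ref{thm:supl} to convert between the two formulations --- is exactly the paper's. The problem is the last step as you describe it. You start from the \emph{right} pre-Lie relation $r$, whereas the $\ldend$ axioms as stated are literally the \sucs of the \emph{left} pre-Lie relation $v_1-v_2-v_{12}+v_{11}$; the paper's proof silently switches presentations for precisely this reason (``Note here we use the left Pre-Lie algebra''), and undoes the switch at the end by the relabeling $\prec\mapsto\prec^{(12)}$, $\succ\mapsto\succ^{(12)}$. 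The two $\BS_3$-submodules of $\mathcal{T}(\widetilde{\gensp})(3)$ --- the one generated by your $\su_x(r),\su_y(r),\su_z(r)$ and the one generated by $r_1,r_2$ --- are \emph{not equal}: they are exchanged by the operad involution $a\prec b\mapsto b\succ a$, $a\succ b\mapsto b\prec a$, and define isomorphic but distinct quotients of $\mathcal{T}(\widetilde{\gensp})$. So the ``finite linear-algebra verification'' you propose cannot succeed; no bookkeeping of the twisted operations will express $r_1$ as a combination of $\BS_3$-translates of your three relations.

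Here is a concrete witness. Decompose $\mathcal{T}(\widetilde{\gensp})(3)$ according to which pair of leaves the inner vertex joins. Among the nine spanning translates of $r_1$ and $r_2$, only $r_1$ and $r_1^{(12)}$ meet the summand ``root labeled by $\svec{\mu}{\prec}$, inner vertex joining $x$ and $y$'', and their components there are $\svec{\mu}{\prec}\circ_{\rmi}\bigl(\svec{\mu}{\prec}-\svec{\mu}{\succ}^{(12)}\bigr)$ and $\svec{\mu}{\prec}\circ_{\rmi}\bigl(\svec{\mu}{\prec}^{(12)}-\svec{\mu}{\succ}\bigr)$. Hence in \emph{every} element of the $\ldend$ relation module the coefficient of $(x\prec y)\prec z$ equals minus the coefficient of $(y\succ x)\prec z$. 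In your $\su_x(r)$ these coefficients are $1$ and $0$, so $\su_x(r)$ is not in that module. The repair is cheap and is what the paper does: either run the computation on the left pre-Lie relation, whose \sucs come out literally as $r_1$, $-r_1^{(12)}$ and $r_2$, or keep your computation and finish by applying the isomorphism $\svec{\mu}{\prec}\mapsto\svec{\mu}{\succ}^{(12)}$, $\svec{\mu}{\succ}\mapsto\svec{\mu}{\prec}^{(12)}$ induced by the change of $\bfk[\BS_2]$-basis $\mu\mapsto\mu'$ of $\gensp$ (legitimate by the paper's independence-of-basis statement for \sucs), which carries your relations onto the $\ldend$ ones. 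As written, though, the claimed equality of submodules is false and the verification would fail.
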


\begin{proof}
Let $\mu$ be the generating operation of $\prelie$. Set $\prec:=\svec{\mu}{\prec}$ and
$\succ:=\svec{\mu}{\succ}$. The space of relations of $\prelie$ is generated as an $\BS_{3}$-module by
$$v_1-v_2-v_{12}+v_{11}.$$
Note here we use the left Pre-Lie algebra. The space of relations of {\it LDend} is generated, as an $\BS_{3}$-module, by
\begin{eqnarray*}
r_1:&=&(x\prec y)\prec z+y\succ(x\prec z)-x\prec(y\cdot z)-(y\succ
x)\prec z,\\
r_2:&=&(x\cdot y)\succ z+y\succ(x\succ z)-x\succ(y\succ z)-(y\cdot x)\succ z.
\end{eqnarray*}
Then we have
{\small
\begin{eqnarray*}
\su_x(v_1-v_2-v_{12}+v_{11})&=&(x\prec y)\prec z-x\prec(y\prec z+y\succ z)-(y\succ x)\prec z+y\succ(x\prec z)=r_1;\\
\su_y(v_1-v_2-v_{12}+v_{11})&=&(x\succ y)\prec z-x\succ(y\prec z)-(y\prec x)\prec z+y\prec(x\succ z+x\prec z)=-(12)\cdot r_1;\\
\su_z(v_1-v_2-v_{12}+v_{11})&=&(x\succ y+x\prec y)\succ z-x\succ(y\succ z)-(y\prec x+y\succ x)\succ z+y\succ(x\succ z)=r_2.
\end{eqnarray*}
}
Rewriting the relations with the operations $\prec^{(12)}$, $\succ^{(12)}$ and then, replacing these operations by $<$ and $>$ respectively, we get $\su(\prelie)={\it LDend}$.
\end{proof}

An {\bf L-quadri-algebra}~\mcite{LNB} is a vector space endowed with
four binary operations $\swarrow$, $\nwarrow$, $\nearrow$ and
$\searrow$ that satisfy the following relations
\begin{eqnarray*}
&& x\searrow(y\nwarrow z)-(x\searrow y)\nwarrow z - y\nwarrow (x\nearrow z+x\nwarrow z+x\swarrow z+x\searrow z)+(y\nwarrow x)\nwarrow z = 0\ ; \\
&&   x\searrow(y\nearrow z)-(x\searrow y+x\swarrow y)\nearrow z - y\nearrow(x\searrow z+ x\nearrow z)+(y\nearrow x+y \nwarrow x)\nearrow z =0 \ ;\\&&
  x\searrow(y\swarrow z)-(x\searrow y +x \nearrow y)\swarrow z - y\swarrow(x\searrow z+x\swarrow z)+(y\swarrow x+y\nwarrow x)\swarrow z =0\ ; \\&&
  x\nearrow(y\swarrow z+y\nwarrow z)-(x\nearrow y)\nwarrow z - y\swarrow(x\nearrow z+x\nwarrow z) +(y\swarrow x)\nwarrow z= 0\ ; \\
&&x\searrow(y\searrow z) -(x\nearrow y +x\nwarrow y + x\swarrow y+x\searrow y)\searrow z  \\
&&
- y\searrow(x\searrow z) +(y\nearrow x +y\nwarrow x + y\swarrow x+y\searrow x)\searrow z = 0 \ .
\end{eqnarray*}
Let $\it{LQuad}$ denote the operad of L-quadri-algebras.

\begin{prop}\mlabel{pp:lquad}
 The \suc of ${\it LDend}$ is ${\it LQuad}$, equivalently $$\prelie^{\bullet 3} \cong \lquad \ .$$
\end{prop}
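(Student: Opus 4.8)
The plan is to follow the pattern used for Proposition~\ref{supreLie}, combining the presentation-level computation of the \suc with the already-established identifications. First, recall from Proposition~\ref{supreLie} that $\ldend\cong\su(\prelie)$, so $\su(\ldend)\cong\su^{2}(\prelie)$. Since $\prelie$ and $\su(\prelie)=\ldend$ are both binary quadratic, Theorem~\ref{thm:supl} applies to $\ldend$ and gives $\su(\ldend)\cong\prelie\bullet\ldend$; feeding in $\ldend\cong\prelie\bullet\prelie$ (the ``equivalently'' clause of Proposition~\ref{supreLie}) and using associativity of the Manin black product yields $\su(\ldend)\cong\prelie\bullet\prelie\bullet\prelie=\prelie^{\bullet 3}$. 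Hence the ``equivalently'' assertion will follow automatically once the first assertion $\su(\ldend)=\lquad$ is proved, so the substance of the proof is to produce an explicit presentation of $\su(\ldend)$ and match it with the defining relations of $\lquad$.

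To do this I would start from the presentation of $\ldend$ by its two binary operations $\prec,\succ$ and its two locally homogeneous ternary relations
\[ r_{1}=(x\prec y)\prec z+y\succ(x\prec z)-x\prec(y\cdot z)-(y\succ x)\prec z, \]
\[ r_{2}=(x\cdot y)\succ z+y\succ(x\succ z)-x\succ(y\succ z)-(y\cdot x)\succ z, \]
with $\cdot=\prec+\succ$, which generate the relation space as an $\BS_{3}$-module. By the definition of the \suc of a binary operad, $\su(\ldend)$ is presented by the four operations obtained by decorating $\prec$ and $\succ$ with $\{\prec,\succ\}$ — which I would identify with $\nwarrow,\swarrow,\nearrow,\searrow$ following the conventions of~\mcite{LNB} — and by the relations $\su_{x}(r_{s}),\su_{y}(r_{s}),\su_{z}(r_{s})$ for $s=1,2$. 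Each $\su_{x}(r_{s})$ is computed mechanically via the path description of Proposition~\ref{succpath}: in every monomial one relabels each vertex on the path from the root to the leaf carrying $x$ by $\prec$ or $\succ$ according as the path turns left or right there, and every other vertex by $\star=\prec+\succ$. This produces six ternary relations in $\nwarrow,\swarrow,\nearrow,\searrow$.

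It then remains to check that these six relations, after rewriting through the $\BS_{2}$-action on the generating space exactly as in the last step of the proof of Proposition~\ref{supreLie} and after using Lemma~\ref{lem:sactsu} to identify those relations that differ only by a permutation of $\{x,y,z\}$ (the phenomenon $\su_{y}(r)=-(12)\cdot\su_{x}(r)$ already occurring for $\ldend=\su(\prelie)$), collapse to precisely the five relations listed in the definition of an L-quadri-algebra. Independence of this construction from the chosen basis and presentation is guaranteed by Proposition~\ref{indgenbas}. The main obstacle is purely bookkeeping: tracking which input each of $x,y,z$ labels in every monomial, handling the $\BS_{2}$-renaming of the ``transposed'' operations consistently, and verifying the $\BS_{3}$-orbit coincidences so that the six computed relations reduce to the listed five; the computation itself requires no idea beyond Proposition~\ref{succpath}.
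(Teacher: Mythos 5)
Your proposal is correct and follows essentially the same route as the paper: reduce the ``equivalently'' clause to $\su(\ldend)\cong\lquad$ via Theorem~\ref{thm:supl} (applied with Proposition~\ref{supreLie} and associativity of $\bullet$), then compute the \suc of the two defining relations of $\ldend$ on the presentation level, identify the four decorated operations with $\nwarrow,\swarrow,\nearrow,\searrow$, and match the resulting relations (one pair lying in the same $\BS_3$-orbit, so six collapse to five) with the defining relations of $\lquad$. This is exactly the computation the paper records in its table, so no gap remains beyond the bookkeeping you describe.
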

\begin{proof}
By Theorem \ref{ManinprdpreLie}, the operad $\prelie^{\bullet n}$, for $n \geq 2$, is given by the $(n-1)$-th \suc of  $\prelie$. By Proposition \ref{supreLie}, we obtain $\prelie^{\bullet 2}   \cong \ldend $. So we just need to prove that $\su(\ldend) \cong \lquad$.

To prove this previous statement, we continue to use the notations in Section~\mref{ss:exam}. Let us denote the two generating operations $\prec$ and $\succ$ of $\ldend$ by $\mu$ and $\nu$ respectively. Then the space of relations of $\ldend$ is generated as an $\BS_{3}$-module by
$$ r_{1}:= \mu \circ_{\rmi} \mu + \nu' \circ_{\rmiii} \mu' - \mu' \circ_{\rmii}\mu - \mu'\circ_{\rmii}\nu - \mu\circ_{\rmi}\nu'  $$
and by
$$ r_{2}:= \nu\circ_{\rmi}\nu + \nu \circ_{\rmi}\mu + \nu'\circ_{\rmiii}\nu' - \nu'\circ_{\rmii}\nu - \nu \circ_{\rmi}\mu' - \nu\circ_{\rmi}\nu'  \ .$$
Under the notations $\nwarrow:= \svec{\mu}{\prec}$, $\nearrow:= \svec{\mu}{\succ}$, $\swarrow:= \svec{\nu}{\prec}$ and $\searrow:=\svec{\nu}{\succ}$, we have
$$\begin{array}{|c||c|c|}
\hline
 \su_{i}(r_{j}) & r_{1} & r_{2}\\
 \hline
 \su_{1} & \text{\footnotesize{$\nwarrow\circ_{\rmi}(\nwarrow-\searrow^{(12)}) + \searrow^{(12)}\circ_{\rmiii}\nwarrow^{(12)} - \nwarrow^{(12)} \circ_{\rmii}\ast$}} & \text{\footnotesize{$\swarrow \circ_{\rmi}(<->^{(12)})+\searrow^{(12)} \circ_{\rmiii}\swarrow^{(12)} -  \swarrow^{(12)} \circ_{\rmii}\vee$}}\\
 \hline
 \su_{2} &\text{\footnotesize{$\nwarrow \circ_{\rmi}(\nearrow-\swarrow^{(12)}) + \swarrow^{(12)} \circ_{\rmiii}\wedge^{(12)} - \nearrow^{(12)}\circ_{\rmii}<$ } } & \text{\footnotesize{$\su_{1}(r_{2})^{(12)}$}} \\
 \hline
 \su_{3} & \text{\footnotesize{$\nearrow\circ_{\rmi}(\wedge-\vee^{(12)}) + \searrow^{(12)}\circ_{\rmiii}\nearrow^{(12)} - \nearrow^{(12)} \circ_{\rmii }> $}} & \text{\footnotesize{$ \searrow\circ_{\rmi}(\ast-\ast^{(12)}) + \searrow^{(12)}\circ_{\rmiii}\searrow^{(12)} - \searrow^{(12)} \circ_{\rmii }\searrow$}}  \\
 \hline
\end{array}$$
where $<:=\swarrow+\nwarrow$, $>:=\searrow+\nearrow$, $\vee:=\searrow+\swarrow$, $\wedge:=\nearrow+\nwarrow$ and $\ast:=\swarrow+\nwarrow+\nearrow+\searrow$. Finally we get $$ \su(\ldend) \cong \lquad \ .$$
\end{proof}

\subsection*{{\rm A.2.} Alternative and prealternative operads}

We next assume that the
characteristic of $\bfk$ is not two. An {\bf alternative
algebra}~\mcite{KS} is defined to be a $\bfk$-vector space with one bilinear operation $\circ$ that satisfies the following relations
\begin{eqnarray*}
(x\circ y)\circ z+(y\circ x)\circ z&=&x\circ(y\circ z)+y\circ(x\circ
z),\\
(x\circ y)\circ z+(x\circ z)\circ y&=&x\circ(y\circ z)+x\circ(z\circ
y).
\end{eqnarray*}

A {\bf prealternative algebra}~\mcite{NB} is defined to be a $\bfk$-vector space with two
bilinear operations $\prec$ and $\succ$ and that satisfy the following relations
\begin{eqnarray*}
(x\circ y+y\circ x)\succ z&=&x\succ(y\succ z)+y\succ(x\succ z),\\
(x\succ z)\prec y+(z\prec x)\prec y&=&x\succ(z\prec y)+z\prec(x\circ y),\\
(y\circ x)\succ z+(y\succ z)\prec x&=&y\succ(x\succ z)+y\succ(z\prec x),\\
(z\prec x)\prec y+(z\prec y)\prec x&=&z\prec (x\circ y+y\circ x),
\end{eqnarray*}
where $\circ=\prec+\succ$.

\begin{prop} The \suc of the operad {\it Alter} is the operad {\it PreAlter}, equivalently $$ \prelie \bullet \mathit{Alter}= \mathit{PreAlter} \ . $$
And the \Tsuc of the operad {\it Alter} is the operad encoding the following algebraic structure:
{\allowdisplaybreaks
\begin{eqnarray*}
(x\star y+y\star x)\succ z&=&x\succ(y\succ z)+y\succ(x\succ z),\\
(x\succ z)\prec y+(z\prec x)\prec y&=&x\succ(z\prec y)+z\prec(x\star y),\\
(y\star x)\succ z+(y\succ z)\prec x&=&y\succ(x\succ z)+y\succ(z\prec x),\\
(z\prec x)\prec y+(z\prec y)\prec x&=&z\prec (x\star y+y\star x),\\
(x\cdot y)\prec z+(y\cdot x)\prec z&=&x\cdot(y\prec z)+y\cdot(x\prec z),\\
(x\prec y)\cdot z+(y\succ x)\cdot z&=&x\cdot(y\succ z)+y\succ(x\cdot z),\\
(x\cdot y)\prec z+(x\prec z)\cdot y&=&x\cdot(y\prec z)+x\cdot(z\succ y),\\
(x\succ y)\cdot z+(x\succ z)\cdot y&=&x\succ(y\cdot z)+x\succ(z\cdot y),\\
(x\cdot y)\cdot z+(y\cdot x)\cdot z&=&x\cdot(y\cdot z)+y\cdot(x\cdot
z),\\
(x\cdot y)\cdot z+(x\cdot z)\cdot y&=&x\cdot(y\cdot z)+x\cdot(z\cdot
y),
\end{eqnarray*}
}
where $x\star y=x\prec y+x\succ y+x\cdot y$.\\
\end{prop}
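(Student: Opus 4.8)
The plan is to carry out the same kind of explicit computation as in Propositions~\ref{pp:zinb}, \ref{pp:prelie} and~\ref{supreLie}, now for the binary quadratic operad $\mathit{Alter}$, and then read off the two presentations. First I would fix a presentation of $\mathit{Alter}$: its space of generating operations is $\bfk[\BS_2]$, spanned by the single operation $\circ$ (call it $\mu$, so that $\mu'=\mu^{(12)}$), and its space of relations is the $\BS_3$-submodule of $\mathcal{T}(V)(3)$ generated by the left- and right-alternativity relators
$$A_L:=(x\circ y)\circ z+(y\circ x)\circ z-x\circ(y\circ z)-y\circ(x\circ z)$$
and
$$A_R:=(x\circ y)\circ z+(x\circ z)\circ y-x\circ(y\circ z)-x\circ(z\circ y),$$
which can also be written in the $v_i$-basis of $\mathcal{T}(V)(3)$ introduced in Section~\ref{ss:exam}. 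Note that $A_L$ is fixed by the transposition $(xy)$ and $A_R$ by $(yz)$. I then set $\prec:=\svec{\mu}{\prec}$, $\succ:=\svec{\mu}{\succ}$ and $\cdot:=\svec{\mu}{\cdot}$, and record the $\BS_2$-action, which reads $\prec^{(12)}=\svec{\mu'}{\succ}$, $\succ^{(12)}=\svec{\mu'}{\prec}$ and $\cdot^{(12)}=\svec{\mu'}{\cdot}$.

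For the first assertion I would compute, using the description of the \suc in Proposition~\ref{succpath}, the relators $\su_x(A_L),\su_y(A_L),\su_z(A_L)$ and $\su_x(A_R),\su_y(A_R),\su_z(A_R)$. By Lemma~\ref{lem:sactsu} and the symmetries just noted, $\su_y(A_L)$ is the $(12)$-image of $\su_x(A_L)$ and $\su_z(A_R)$ is the $(23)$-image of $\su_y(A_R)$, so only four of these six relators are independent; writing them out in terms of the two operations $\prec,\succ$ (some of the $\su_x(-)$ naturally involve the opposite operations $\prec^{(12)},\succ^{(12)}$, which are then relabeled) reproduces exactly the four axioms of $\mathit{PreAlter}$. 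This gives $\su(\mathit{Alter})\cong\mathit{PreAlter}$, and the identity $\prelie\bullet\mathit{Alter}\cong\mathit{PreAlter}$ follows at once from Theorem~\ref{ManinprdpreLie}, since $\mathit{Alter}$ is binary quadratic.

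For the second assertion I would repeat this with $\tsu$ in place of $\su$, computing $\tsu_J(A_L)$ and $\tsu_J(A_R)$ for each of the seven nonempty subsets $J\subseteq\{x,y,z\}$ by means of Proposition~\ref{tsuccpath}. Lemma~\ref{lem:sacttsu}, together with the invariance of $A_L$ under $(xy)$ and of $A_R$ under $(yz)$, reduces the resulting fourteen relators to the ten displayed in the statement: the three singleton subsets $J$ yield the four $\mathit{PreAlter}$-type relations with $\circ$ replaced by $\star:=\prec+\succ+\cdot$; the full subset $J=\{x,y,z\}$ yields $A_L$ and $A_R$ with $\circ$ replaced by $\cdot$ (the last two relations, in accordance with Proposition~\ref{pp:suast}, item~(\ref{it:tsucdot})); and the three two-element subsets yield the remaining four mixed relations. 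After the same relabeling of the opposite operations, this is precisely the operad described in the statement.

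The only genuine obstacle is bookkeeping: choosing the $\BS_3$-module generators of the relation spaces correctly, keeping track of which relators are $(12)$- or $(23)$-images of which (so the final list is neither redundant nor incomplete), and checking that the relabeling of the opposite operations matches verbatim the published axioms of $\mathit{PreAlter}$ in~\mcite{NB}. None of this is conceptually new; it runs exactly as in Propositions~\ref{pp:zinb}, \ref{pp:prelie} and~\ref{supreLie}, so in the final write-up it suffices to display the relators $\su_x(A_L),\ldots,\su_z(A_R)$ (resp. $\tsu_J(A_L),\tsu_J(A_R)$) and match them against the listed relations.
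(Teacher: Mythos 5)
Your proposal is correct: the paper states this appendix proposition without proof, and your computation is exactly the routine verification it leaves implicit, carried out by the same method as in Propositions~\mref{pp:zinb}, \mref{pp:prelie} and~\ref{supreLie} (compute $\su_x,\su_z$ of the left-alternative relator and $\su_x,\su_y$ of the right-alternative one, resp.\ the ten independent $\tsu_J$, using Propositions~\mref{succpath} and~\mref{tsuccpath} and the equivariance Lemmas~\mref{lem:sactsu}, \mref{lem:sacttsu}), with the Manin-product identification supplied by Theorem~\mref{thm:supl}. Your symmetry count (four bisuccessor generators, ten trisuccessor generators) matches the displayed axioms, so nothing is missing.
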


\subsection*{{\rm A.3.} Leibniz and pre-Leibniz operads}

A {\bf Leibniz algebra}~\mcite{Lo1} is defined to be a $\bfk$-vector space with one bilinear product $[\;,\;]$ satisfying the Leibniz identity
$$[[x,y],z]=[[x,z],y]+[x,[y,z]] \ .$$
\begin{prop}
The \suc of the operad {\it Leibniz} is the operad encoding the following algebraic structure:
\begin{eqnarray*}
(x\prec y)\prec z&=&(x\prec z)\prec y+x\prec(y\succ z+y\prec z),\\
(x\succ y)\prec z&=&(x\succ z+x\prec z)\succ y+x\succ(y\prec z),\\
(x\succ y+x\prec y)\succ z&=&(x\succ z)\prec y+x\succ(y\succ z).
\end{eqnarray*}
And the \Tsuc of the operad {\it Leibniz} is the operad encoding the following algebraic structure:
{\allowdisplaybreaks
\begin{eqnarray*}
(x\prec y)\prec z&=&(x\prec z)\prec y+x\prec(y\star z),\\
(x\succ y)\prec z&=&(x\star z)\succ y+x\succ(y\prec z),\\
(x\star y)\succ z&=&(x\succ z)\prec y+x\succ(y\succ z),\\
(x\cdot y)\prec z&=&(x\prec z)\cdot y+x\cdot(y\prec z),\\
(x\prec y)\cdot z&=&(x\cdot z)\prec y+x\cdot(y\succ z),\\
(x\succ y)\cdot z&=&(x\succ z)\cdot y+x\succ(y\cdot z),\\
(x\cdot y)\cdot z&=&(x\cdot z)\cdot y+x\cdot(y\cdot z),
\end{eqnarray*}
}
where $x\star y=x\prec y+x\succ y+x\cdot y$.
\end{prop}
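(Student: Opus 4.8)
The plan is to reduce both assertions to a direct evaluation of the successors of the unique defining relation of {\it Leibniz}, using the combinatorial descriptions of the \suc and \Tsuc given in Propositions~\ref{succpath} and~\ref{tsuccpath}. Since a Leibniz algebra has one bilinear bracket $[\,,\,]$ carrying no symmetry, the operad of Leibniz algebras is binary and quadratic, of the form ${\it Leibniz}=\mathcal{T}(\gensp)/(R)$ with $\gensp=\gensp(2)=\bfk[\BS_2]$; writing $\mu\leftrightarrow[x,y]$ and $\mu'=\mu^{(12)}\leftrightarrow[y,x]$ and using the notation of Section~\ref{ss:exam}, the relation space $R$ is generated, as an $\BS_3$-module, by the single locally homogeneous element $r:=v_1-v_2-v_4$, which under the correspondence of Eq.~(\ref{eq:type}) is the Leibniz identity $[[x,y],z]-[x,[y,z]]-[[x,z],y]=0$. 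By the definition of the successor of a binary operad, $\su({\it Leibniz})=\mathcal{T}(\widetilde{\gensp})/(\su(R))$ is then the operad with two generating binary operations $\prec:=\svec{\mu}{\prec}$ and $\succ:=\svec{\mu}{\succ}$, carrying no imposed symmetry, whose relation space is generated as an $\BS_3$-module by $\su_x(r),\su_y(r),\su_z(r)$; likewise $\tsu({\it Leibniz})=\mathcal{T}(\widehat{\gensp})/(\tsu(R))$ has three generating binary operations $\prec,\succ,\cdot$, with relation space generated as an $\BS_3$-module by $\tsu_J(r)$ over the seven nonempty subsets $J\subseteq\{x,y,z\}$. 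So it suffices to compute these successors of $r$ and match them with the displayed relations.

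For the \suc statement I would apply Proposition~\ref{succpath} to each of the three trees occurring in $r$: keep the underlying planar shape, relabel a vertex $\gop$ on the path from the root to the distinguished leaf by $\svec{\gop}{\prec}$ or $\svec{\gop}{\succ}$ according to whether the path turns left or right there, and relabel every off-path vertex by $\svec{\gop}{\star}=\svec{\gop}{\prec}+\svec{\gop}{\succ}$. Because every internal vertex of every monomial in $r$ carries the plain bracket $\mu$ in standard argument order, no $(12)$-twisted labels appear, and one reads off $\su_x(r)=(x\prec y)\prec z-x\prec(y\star z)-(x\prec z)\prec y$, $\su_y(r)=(x\succ y)\prec z-x\succ(y\prec z)-(x\star z)\succ y$ and $\su_z(r)=(x\star y)\succ z-x\succ(y\succ z)-(x\succ z)\prec y$, with $x\star y=x\prec y+x\succ y$. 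These are exactly the three displayed identities, proving the first claim.

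For the \Tsuc statement I would run the analogous computation through Proposition~\ref{tsuccpath}, now over all seven nonempty $J\subseteq\{x,y,z\}$ and with the three-term $\svec{\gop}{\star}=\svec{\gop}{\prec}+\svec{\gop}{\succ}+\svec{\gop}{\cdot}$: a vertex receives the label $\svec{\gop}{\cdot}$ exactly when the leaves of $J$ below it lie in both of its subtrees, $\svec{\gop}{\prec}$ or $\svec{\gop}{\succ}$ when they all lie in one subtree, and $\svec{\gop}{\star}$ when no leaf of $J$ lies below it. The singletons $\{x\},\{y\},\{z\}$ reproduce the three relations of the \suc case but with the three-term $\star$; the two-element subsets $\{x,y\},\{x,z\},\{y,z\}$ produce $(x\cdot y)\prec z=(x\prec z)\cdot y+x\cdot(y\prec z)$, $(x\prec y)\cdot z=(x\cdot z)\prec y+x\cdot(y\succ z)$ and $(x\succ y)\cdot z=(x\succ z)\cdot y+x\succ(y\cdot z)$ respectively; and $J=\{x,y,z\}$ relabels every vertex by $\svec{\gop}{\cdot}$, giving the Leibniz identity $(x\cdot y)\cdot z=(x\cdot z)\cdot y+x\cdot(y\cdot z)$ for $\cdot$ (consistent with Proposition~\ref{pp:suast}(\ref{it:tsucdot})). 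These are precisely the seven displayed relations, proving the second claim.

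The argument is purely mechanical, so there is no conceptual obstacle; the delicate part is bookkeeping. The three summands of $r$ are binary trees of different shapes with different planar leaf orders (corresponding to $(xy)z$, $x(yz)$ and $(xz)y$), so one must track correctly, for each tree and each leaf $x,y,z$ (and, for the \Tsuc, each subset $J$), which internal vertex lies on which root-to-leaf path and whether that path turns left or right there; an error at this step changes the resulting relation. It is also worth noting explicitly that, although $\widetilde{\gensp}$ and $\widehat{\gensp}$ contain the twisted generators $\prec^{(12)},\succ^{(12)}$ (and $\cdot^{(12)}$), these do not enter the displayed relations: since $r$ is written with the bracket in standard order at each vertex, the operads are presented by exactly the listed identities and not their $\BS_3$-translates.
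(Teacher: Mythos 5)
Your computation is correct and is exactly the argument the paper leaves implicit for this Appendix proposition: it is the same direct evaluation of $\su_x,\su_y,\su_z$ (resp.\ $\tsu_J$ over the seven nonempty $J$) on the single generating relation $v_1-v_2-v_4$ of \emph{Leibniz}, via Propositions~\ref{succpath} and~\ref{tsuccpath}, that the paper carries out explicitly for $\comm$, $\lie$ and $\postlie$ in Section~\ref{ss:exam}. Your closing remark about the twisted generators $\prec^{(12)},\succ^{(12)}$ is also the right observation, since $\widetilde{\gensp}$ is four-dimensional here (the Leibniz bracket has no symmetry) and one must justify that the listed identities, taken as $\BS_3$-generators, present the operad.
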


\subsection*{{\rm A.4.} The operad {\it Poisson}}
A {\bf (left) post-Poisson algebra} is a $\bfk$-vector
space $A$ equipped with four bilinear operations
$([,],\diamond,\cdot,\succ)$ such that $(A,[,],\diamond)$ is a
(left) post-Lie algebra,  $(A,\cdot,\succ)$ is a commutative
tridendriform algebra, and they are compatible in the sense that
(for any $x,y,z\in A$)
$$[x,y\cdot z]=[x,y]\cdot z+y\cdot[x,z],$$
$$[x,z\succ y]=z\succ[x,y]-y\cdot(z\diamond x),$$
$$x\diamond(y\cdot z)=(x\diamond y)\cdot z+y\cdot(x\diamond z),$$
$$(y\succ z+z\succ y+y\cdot z)\diamond x=z\succ(y\diamond
x)+y\succ(z\diamond x),$$ $$x\diamond(z\succ y)=z\succ(x\diamond y)+(x\diamond z-z\diamond
x+[x,z])\succ y.$$
Let \textit{PostPoisson} denote the operad encoding the post-Poisson algebras.

\begin{remark}
{\rm Let $(A,[,],\diamond,\cdot,\succ)$ be a post-Poisson algebra. If the operations $[,]$ and $\cdot$ are trivial, then it is a pre-Poisson algebra.}
\end{remark}

\begin{prop}
The \Tsuc of the operad {\it Poisson} is the operad {\it PostPoisson}, equivalently $$ \postlie \bullet \mathit{Poisson}= \mathit{PostPoisson}\ . $$
\end{prop}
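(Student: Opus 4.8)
The plan is to deduce the two assertions from one another via Theorem~\ref{thm:tsupl} and then to prove the isomorphism $\tsu(\mathit{Poisson})\cong\mathit{PostPoisson}$ directly from the presentations. Since $\mathit{Poisson}$ is a binary quadratic operad, Theorem~\ref{thm:tsupl} already yields $\tsu(\mathit{Poisson})\cong\postlie\bullet\mathit{Poisson}$ with no further work, so it suffices to identify $\tsu(\mathit{Poisson})$ with the operad encoding post-Poisson algebras.

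First I would fix the presentation $\mathit{Poisson}=\mathcal{T}(\gensp)/(R)$, where $\gensp=\gensp_\omega\oplus\gensp_\nu$ with $\omega$ spanning the sign representation (the bracket $\{\,,\,\}$) and $\nu$ spanning the trivial representation (the commutative associative product $\circ$), and where $R$ is generated, as an $\BS_3$-module, by three elements: the Jacobi element $j$ built from $\omega\circ\omega$, the associativity element $a$ built from $\nu\circ\nu$, and the Leibniz compatibility element $\ell=\{x,y\circ z\}-\{x,y\}\circ z-y\circ\{x,z\}$, which is invariant under the transposition $y\leftrightarrow z$. Correspondingly $\tsu(\mathit{Poisson})=\mathcal{T}(\widehat{\gensp})/(\tsu(R))$, where in $\widehat{\gensp}$ the $\BS_2$-action reads $\svec{\omega}{\prec}^{(12)}=-\svec{\omega}{\succ}$, $\svec{\omega}{\cdot}^{(12)}=-\svec{\omega}{\cdot}$, $\svec{\nu}{\prec}^{(12)}=\svec{\nu}{\succ}$ and $\svec{\nu}{\cdot}^{(12)}=\svec{\nu}{\cdot}$; I would then rename $[\,,\,]:=\svec{\omega}{\cdot}$, $\diamond:=\svec{\omega}{\prec}$, $\cdot:=\svec{\nu}{\cdot}$, $\succ:=\svec{\nu}{\succ}$, the remaining generators $\svec{\omega}{\succ}$ and $\svec{\nu}{\prec}$ being then determined by the $\BS_2$-action (for instance $\svec{\omega}{\succ}$ applied to $(x,z)$ equals $-(z\diamond x)$).

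Next I would compute $\tsu_J$ of each of $j$, $a$, $\ell$ for $J$ running over one representative of each $\BS_3$-orbit of nonempty subsets of $\{x,y,z\}$ --- a singleton, a pair, the full set --- recovering the remaining subsets from Lemma~\ref{lem:sacttsu}. For $j$ this is exactly the computation already carried out in establishing $\tsu(\lie)=\postlie$, and after the renaming it returns the post-Lie axioms for $([\,,\,],\diamond)$. For $a$, using the commutativity of $\nu$, it is the computation behind $\tsu(\comm)=\mathit{ComTriDend}$, returning (up to the $\BS_2$-action) the commutative tridendriform axioms for $(\cdot,\succ)$. The new content is $\tsu_J(\ell)$: writing $\ell$ as a sum of three labeled planar trees in $\mathcal{T}(\gensp)(3)$, applying the relabeling rule of Proposition~\ref{tsuccpath}, and eliminating $\svec{\omega}{\succ},\svec{\nu}{\prec}$ through the $\BS_2$-action, the seven values $\tsu_J(\ell)$ collapse --- by the $y\leftrightarrow z$ symmetry of $\ell$ and by Lemma~\ref{lem:sacttsu} --- to precisely the five mixed compatibility relations in the definition of a post-Poisson algebra. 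A useful internal check here is that the combinations $y\succ z+z\succ y+y\cdot z$ and $x\diamond z-z\diamond x+[x,z]$ occurring in those relations are exactly the images of $\svec{\nu}{\star}$ and $\svec{\omega}{\star}$ under the renaming. Finally, since the $\tsu$ construction is independent of the chosen basis and of the chosen set of planar representatives (as recorded after Lemma~\ref{basisfreeop}), matching the two presentations completes the identification.

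I expect the main obstacle to be the bookkeeping in the step $\tsu_J(\ell)$: tracking the signs produced by the skew-symmetry of $\omega$, deciding which of the seven subsets $J$ give the same relation modulo the $\BS_3$-action, and --- most importantly --- checking that $(\tsu(R))$ contains nothing beyond the post-Lie relations, the commutative tridendriform relations and the five mixed relations, so that $\tsu(\mathit{Poisson})$ is neither a proper quotient of $\mathit{PostPoisson}$ nor conversely. As an independent sanity check one can invoke Theorem~\ref{thm:tros}: a Rota-Baxter operator $P$ of weight one on a Poisson algebra makes it a $\postlie\bullet\mathit{Poisson}$-algebra via $x\cdot_j y:=x\circ_j y$, $x\prec_j y:=x\circ_j P(y)$, $x\succ_j y:=P(x)\circ_j y$ for $\circ_j\in\{\{\,,\,\},\circ\}$, and unfolding these on a free Rota-Baxter Poisson algebra should reproduce exactly the post-Poisson identities.
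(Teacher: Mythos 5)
Your proposal is correct and follows essentially the route the paper intends: the Manin-product form is immediate from Theorem~\ref{thm:tsupl} since $\mathit{Poisson}$ is binary quadratic, and the identification $\tsu(\mathit{Poisson})=\mathit{PostPoisson}$ is the same routine presentation-matching computation already carried out for $\tsu(\lie)=\postlie$ and $\tsu(\comm)=\mathit{ComTriDend}$, with the \Tsucs of the Leibniz relator collapsing (via its $y\leftrightarrow z$ symmetry and Lemma~\ref{lem:sacttsu}) to the five mixed post-Poisson relations. Your worry about a ``proper quotient'' is unnecessary, since both operads are given by presentations and matching generators and relation spaces as $\BS$-modules already yields the isomorphism.
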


\subsection*{{\rm A.5.} The operad {\it Jordan}}

Assume that the characteristic of $\bfk$ is neither two nor three.
\begin{prop}The \Tsuc of the operad {\it Jordan} is the operad encoding the following algebraic structure:
{\allowdisplaybreaks\small
\begin{eqnarray*}
&&((x\prec y)\prec u)\prec z+x\prec((y\star z)\star u)+((x\prec z)\prec u)\prec y\\
&=&(x\prec y)\prec(u\star z)+(x\prec u)\prec(y\star z)+(x\prec z)\prec(u\star y),\\
&&(u\prec(x\star y))\prec z+(u\prec(y\star z))\prec x+(u\prec(z\star x))\prec y\\
&=&(u\prec z)\prec(x\star y)+(u\prec z)\prec(y\star z)+(u\prec y)\prec(z\star x),\\
&&((x\cdot y)\prec u)\prec z+((y\prec z)\prec u)\cdot x+((x\prec z)\prec u)\cdot y\\
&=&(x\cdot y)\prec(u\star z)+(y\prec z)\cdot (x\prec u)+(x\prec z)\cdot(y\prec u),\\
&&((x\prec y)\cdot u)\prec z+(u\prec(y\star z))\cdot x+((x\prec z)\cdot u)\prec y\\
&=&(x\prec y)\cdot(u\prec z)+(u\cdot x)\prec(y\star z)+(x\prec z)\cdot(u\prec y),\\
&&((x\cdot y)\prec u)\cdot z+((y\cdot z)\prec u)\cdot x+((z\cdot x)\prec u)\cdot y\\
&=&(x\cdot y)\cdot(z\prec u)+(y\cdot z)\cdot(x\prec u)+(z\cdot x)\cdot(y\prec u),\\
&&((x\cdot y)\cdot u)\prec z+((y\prec z)\cdot u)\cdot x+((x\prec z)\cdot u)\cdot y\\
&=&(x\cdot y)\cdot(u\prec z)+(y\prec z)\cdot(u\cdot x)+(x\prec z)\cdot(u\cdot y),\\
&&((x\cdot y)\cdot u)\cdot z+((y\cdot z)\cdot u)\cdot x+((z\cdot
x)\cdot u)\cdot y\\
&=&(x\cdot y)\cdot(u\cdot z)+(y\cdot z)\cdot(u\cdot
x)+(z\cdot x)\cdot(u\cdot y),
\end{eqnarray*}
}
where $x\star y=x\prec y+y\prec x+x\cdot y$.\\
\end{prop}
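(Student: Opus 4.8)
The plan is to compute $\tsu(\mathit{Jordan})$ directly from the definition of the \Tsuc, using the path description of Proposition~\ref{tsuccpath} and the $\BS_n$-equivariance of Lemma~\ref{lem:sacttsu} to keep the computation finite; note that $\mathit{Jordan}$ is not quadratic, so Theorem~\ref{ManinprdPostLie} does not apply and we must argue from scratch. Recall that in characteristic $\neq 2,3$ the operad $\mathit{Jordan}$ is $\mathcal{T}(V)/(R)$ with $V = V(2)$ one-dimensional, spanned by the commutative operation $\circ$ (so $\circ^{(12)} = \circ$), and $R$ the $\BS_4$-submodule of $\mathcal{T}(V)(4)$ generated by the single linearized Jordan identity
\[
r := \sum_{\mathrm{cyc}(x,y,z)}\big(((x\circ y)\circ u)\circ z - (x\circ y)\circ(u\circ z)\big),
\]
with leaf-variables $x,y,z,u$ and the sum over the three cyclic permutations of $(x,y,z)$. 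By definition, $\tsu(\mathit{Jordan}) = \mathcal{T}(\widehat{V})/(\tsu(R))$, where $\widehat{V}$ has $\bfk[\BS_2]$-basis $\{\prec,\cdot\}$, i.e.\ $\{\prec,\ \succ := \prec^{(12)},\ \cdot\}$ with $\cdot^{(12)} = \cdot$, so $x\succ y = y\prec x$ and $\cdot$ is commutative, and $\tsu(R)$ is the $\BS$-submodule generated by all $\tsu_J(r)$ with $\emptyset\neq J\subseteq\{x,y,z,u\}$. The seven displayed relations are phrased exactly in these two operations, with $x\star y = x\prec y + y\prec x + x\cdot y$ the substitute of $\svec{\circ}{\star}$.

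Since $\circ$ is commutative, $r$ is invariant under the subgroup $S_3\leq\BS_4$ permuting $\{x,y,z\}$, so Lemma~\ref{lem:sacttsu} gives $\tsu_{\sigma(J)}(r) = \tsu_J(r)^{\sigma^{-1}}$ for $\sigma\in S_3$; hence $\tsu(R)$ is generated as an $\BS$-module by $\{\tsu_J(r)\}_{J\in\mathcal{O}}$ for a set $\mathcal{O}$ of representatives of the $S_3$-orbits of nonempty subsets of $\{x,y,z,u\}$. There are exactly seven such orbits, with representatives $\{x\},\ \{u\},\ \{x,y\},\ \{x,u\},\ \{x,y,z\},\ \{x,y,u\},\ \{x,y,z,u\}$, matching the seven displayed identities. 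It then remains to evaluate $\tsu_J(r)$ for each of these seven $J$: in each of the six monomials of $r$, whose underlying trees are the left comb $((\bullet\bullet)\bullet)\bullet$ and the balanced tree $(\bullet\bullet)(\bullet\bullet)$, relabel every vertex by Proposition~\ref{tsuccpath} (a vertex lying on the root-to-$J$ paths gets $\svec{\circ}{\prec}$, $\svec{\circ}{\succ}$ or $\svec{\circ}{\cdot}$ according as those paths all turn left, all turn right, or split; a vertex off all such paths gets $\svec{\circ}{\star}$), then substitute $\svec{\circ}{\prec} = \prec$, $\svec{\circ}{\succ} = \succ$, $\svec{\circ}{\cdot} = \cdot$ and rewrite $\succ$ via $x\succ y = y\prec x$. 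For instance, with $J = \{x\}$ the three left-comb monomials give $((x\prec y)\prec u)\prec z$, $x\prec((y\star z)\star u)$ and $((x\prec z)\prec u)\prec y$, while the three balanced monomials give $(x\prec y)\prec(u\star z)$, $(x\prec u)\prec(y\star z)$ and $(x\prec z)\prec(u\star y)$ --- precisely the first displayed relation; with $J = \{u\}$ one obtains the second, and so on. Checking that the seven outputs coincide, in order, with the seven displayed relations yields the presentation of $\tsu(\mathit{Jordan})$ by $\prec$ and $\cdot$ claimed in the statement.

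The main obstacle is purely computational: the last step requires, for each of the seven subsets $J$, tracking the left/right turns of up to four root-to-leaf paths through two three-vertex trees across six monomials, and then collapsing the resulting $\succ$-terms into flipped $\prec$-terms so that the output lies in the claimed two-operation presentation. No conceptual difficulty arises beyond the equivariance bookkeeping already done, but the last two relations ($J = \{x,y,u\}$ and $J = \{x,y,z,u\}$), in which inner vertices more often receive $\svec{\circ}{\cdot}$, are the most error-prone and can be cross-checked using $\svec{\circ}{\cdot}^{(12)} = \svec{\circ}{\cdot}$ together with Lemma~\ref{lem:sacttsu}.
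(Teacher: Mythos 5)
Your proposal is correct and follows essentially the route the paper itself takes for such statements and leaves implicit here (the appendix proposition is stated without proof): a direct evaluation of $\tsu_J$ on the linearized Jordan relation using the path description of Proposition~\ref{tsuccpath}, exactly as in the paper's proof that $\tsu(\lie)=\postlie$, with the sensible refinement of invoking Lemma~\ref{lem:sacttsu} to cut the fifteen nonempty subsets $J$ down to seven $\BS_{3}$-orbit representatives matching the seven displayed identities, and your sample computation for $J=\{x\}$ is accurate. One caveat: carrying out the $J=\{u\}$ case actually yields $(u\prec x)\prec(y\star z)$ as the middle right-hand term of the second identity, so the paper's printed $(u\prec z)\prec(y\star z)$ is a typographical error, and your final ``the outputs coincide with the display'' check fails only at that misprint, not through any fault of your method.
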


\noindent {\bf Acknowledgements: }
C. Bai would like to thank the support by NSFC (10920161) and SRFDP
(200800550015). L. Guo thanks NSF grant DMS-1001855
for support.
O. Bellier would like to thank the Max-Planck Institute for Mathematics for the excellent working conditions she enjoyed there.
The authors thank Bruno Vallette for his many helps, and thank the Chern Institute of Mathematics at Nankai
University for providing a stimulating environment that fostered this collaboration during the Sino-France Summer Workshop on Operads and Universal Algebra in June-July 2010.

\end{document}